\newtheorem{theorem}{Theorem} 
\newtheorem{definition}{Definition}[section]
\newtheorem{remark}{Remark}[section]
\newtheorem{proposition}{Proposition}[section]
\newtheorem{lemma}{Lemma}[section]
\numberwithin{equation}{section}
\def\namedlabel#1#2{\begingroup
    #2%
    \def\@currentlabel{#2}%
    \phantomsection\label{#1}\endgroup
}
\newcommand{\ensemblenombre}[1]{\mathbb{#1}}
\newcommand{\N}{\ensemblenombre{N}} 
\newcommand{\Z}{\ensemblenombre{Z}} 
\newcommand{\R}{\ensemblenombre{R}}
\newcommand{\Dim}{d}
\newcommand{\Color}{q}
\newcommand{\Conf}{\omega}
\newcommand{\tConf}{\widetilde{\Conf}}
\newcommand{\ConfSpace}{\Omega}
\newcommand{\TemperedConfSpace}{\mathcal{T}}
\newcommand{\tConfSpace}{\widetilde{\ConfSpace}}
\newcommand{\tProbaSetInvariant}{\widetilde{\mathcal{P}}_\theta}
\newcommand{\tProbaSetInvariantDelta}{\widetilde{\mathcal{P}}_{\theta_{\delta}}}
\newcommand{\A}{\mathcal{A}} 
\newcommand{\1}{\mathds{1}}
\newcommand{\GibbsPotts}{\mathcal{G}^{potts}}
\newcommand{\EdgeSet}{\mathcal{E}}
\newcommand{\Poisson}{\pi}
\newcommand{\tPoisson}{\widetilde{\Poisson}}
\newcommand{\Intensity}{z}
\newcommand{\Proportion}{\tilde{\alpha}}
\newcommand{\ProportionMax}{\#^{\Proportion}_{\max}}
\newcommand{\RCM}{\mathcal{C}^{\Intensity, \Proportion}_{\Lambda ,wired}}
\newcommand{\RCMn}{\mathcal{C}^{\Intensity, \Proportion}_{\Lambda_n ,wired}}
\newcommand{\FK}{\mathbb{P}^{\Intensity,\Proportion}_{\Lambda}}
\newcommand{\FKA}{\mathbb{P}^{\Intensity,\Proportion}_{\Lambda,\A}}
\newcommand{\Constant}{\kappa}
\newcommand{\ConstantBis}{\tilde{\Constant}}
\newcommand{\InfiniteCluster}{C_{\infty}}
\newcommand{\Specification}{\mathscr{P}}
\newcommand{\tSpecification}{{\Xi}}
\newcommand{\PartitionFunction}{Z^{\Intensity,\Proportion}}
\newcommand{\Hamiltonian}{H}
\newcommand{\Entropy}{\mathcal{I}}
\begin{document}
\title{Phase transition for the non-symmetric Continuum Potts model}

\author{Pierre Houdebert}
\affil{Department of Mathematics, University of Potsdam}
\affil{
 \texttt{pierre.houdebert@gmail.com}}
 \date{}
\maketitle

\begin{abstract}
{We prove a phase transition for the non-symmetric continuum Potts model with background interaction, by generalizing the methods introduced in the symmetric case by Georgii and Häggström \cite{georgii_haggstrom}.
The proof relies on a Fortuin-Kasteleyn representation, percolation and stochastic domination arguments.  
} 
  \bigskip

\noindent {\it Key words: Gibbs point process, continuum Potts model, DLR equations, continuum percolation, generalized continuum random cluster model, Fortuin-Kasteleyn representation, stochastic domination.} 
  \bigskip

\noindent {\it AMS MSC 2010:} 60D05; 60G10; 60G55; 60G57; 60G60; 60K35; 82B21; 82B26; 82B43.
\end{abstract}
\section{Introduction} \label{section_introduction}
In Gibbs point processes theory one of the main question of interest is the study of phase transition.
Indeed Gibbs point processes are defined through a family of equations, the Dobrushin-Lanford-Ruelle equations, and it is a natural question to ask whether there exists only one or several solutions to these equations.
Although phase transition is conjectured for most continuum models, it has been rigorously proved only in a few cases.
The first such result was obtain by Ruelle \cite{ruelle_1971} for the symmetric Widom-Rowlinson model, which is a two type particles system with an hard-core repulsion between particles of different types, using a continuum version of the Peierls argument.
This technique was latter generalized to the soft-core Widom-Rowlinson interaction in \cite{lebowitz_lieb_1972}.

In the 1990's Chayes, Chayes \& Koteck\'y \cite{chayes_kotecky} 
and Georgii \& Häggström \cite{georgii_haggstrom} generalized for continuum models the idea of the Fortuin-Kasteleyn representation \cite{fortuin_kasteleyn_1972} introduced for the lattice Ising and Potts models, and proved phase transition results respectively for the symmetric Widom-Rowlinson model and for the continuum Potts model with background interaction.
This idea was then used in a variety of articles, for instance to prove phase transition for the symmetric Widom-Rowlinson model with unbounded radii
 \cite{Dereudre_Houdebert_2019_JSP_PhaseTransitionWR,
Houdebert_2017_percolation_CRCM}.
The idea of the Fortuin-Kasteleyn representation is generalized to the non-symmetric case in the present article.

For the non-symmetric case where each type of particles have different intensities, even fewer results are proved.
A few results are proved for the Widom-Rowlinson model using the Pirogov-Sinai technique, see for instance \cite{bricmont_kuroda_lebowitz_1984,
mazel_suhov_stuhl}.
Recently a sharp phase transition result for the Widom Rowlinson model was obtained in \cite{Dereudre_Houdebert_2019_sharp_phase_transition_WR}, giving an almost complete picture of the phase diagram.

In this article we are interested in the Continuum Potts model with background interaction, as introduced by Georgii and Häggström \cite{georgii_haggstrom}.
We prove that for any initial proportion of particles 
$\Proportion=(\alpha_1,\dots,\alpha_\Color)$ and for the activity parameter $\Intensity$ large enough, there is at least as many distinct Potts measures as there are 
$\alpha_i, i=1\dots \Color$ which are maximal in $\Proportion$.
This result and its proof is a generalization of the proof of the symmetric case done by Georgii and Häggström \cite{georgii_haggstrom}.

The proof relies on a Fortuin-Kasteleyn representation which expresses the colouring correlation as the connectivity in the so-called generalized Continuum Random Cluster model.
This is done using stochastic domination tools.
Therefore by proving a percolation-type bound for this process, one can construct different Potts measures obtained by having different boundary conditions.
Such an idea was used  in \cite{biskup_borgs_chayes_kotecky} for the lattice nearest neighbour Potts model, and we are generalizing it for the continuum setting, to obtain a phase transition result with the \emph{exact same} assumptions as in \cite{georgii_haggstrom}.

The article is organized as follow: in Section \ref{section_preliminaries} we introduced the model and the tools needed later on. 
In Section \ref{section_results} we give the assumptions and state the theorems. 
In Section \ref{section_fk_representation} is introduced the Fortuin-Kasteleyen representation and we state and prove the percolation bound for the generalized Continuum Random Cluster model.
In Section \ref{section_proofs_theo} we prove the main theorems, and finally in the appendix Section \ref{section_appendix} we give the proofs of classical and technical lemmas.
\section{Preliminaries} \label{section_preliminaries}
\subsection{Space}
Through the paper the dimension $\Dim\geq2$ and the number of colours $\Color \geq 2$ are fixed integer numbers.
We are considering the space $\ConfSpace$ (respectively $\ConfSpace_\Lambda$) of locally finite configurations $\Conf$ in $\R^\Dim$ (respectively $\Lambda$).
We will often consider configurations marked by a colour.
For those we are using the notation 
\begin{align*}
\sigma\colon\begin{cases}
\Conf \longrightarrow \{1,\dots,\Color \}\\
x\longmapsto \sigma_x
\end{cases}
\end{align*}
and we write respectively
$\tConfSpace=\{\tConf=(\Conf,\sigma) \}$ 
The configurations spaces $\ConfSpace$ and $\tConfSpace$ are embedded with the usual sigma-algebras generated by the counting variables.

For $\Lambda \subseteq \R^\Dim$, we write $\Conf_{\Lambda}$ as a shorthand for $\Conf \cap \Lambda$.
This notation naturally extends to 
$\tConf_{\Lambda}$.
We write $N_\Lambda(\Conf)$ (respectively 
$N_\Lambda(\tConf)$) 
for the cardinality of the respected configuration inside $\Lambda$.
We write $|\Lambda|$ for the Lebesgue measure of $\Lambda \subseteq \R^\Dim$, and $|j|$ for the sup norm of $j \in \Z^\Dim$.
We write $\Conf' \Conf$ (respectively $\tConf' \tConf$) has a shorthand for $\Conf' \cup \Conf$ (respectively $\tConf' \cup \tConf$).

Let $\tau_x$  be the translation of vector $x \in \R^\Dim$.
This means that $\tau_x(\Conf)=\{y-x, \ y \in \Conf  \}$.
We denote by $\tProbaSetInvariant$ (respectively $\tProbaSetInvariantDelta$)  the set of probability measures on $\tConfSpace$ 
which are invariant under all translations of $\R^\Dim$ (respectively all translation in $\delta\Z^\Dim$).
For $\delta>0$, we write $\Delta_j=j \oplus ]-\delta/2, \delta/2]^\Dim$ 
with $j \in \delta \Z^\Dim$.
\subsection{Poisson point processes}\label{section_PPP}
Let $\Poisson^{\Intensity}$ be the distribution on $\ConfSpace$ of the homogeneous Poisson point process with intensity $\Intensity >0$. 
Recall that it means 
\begin{itemize}
\item for every bounded Borel set $\Lambda$, the distribution of the number of points in $\Lambda$ under $\Poisson^\Intensity$ is a Poisson distribution of mean $\Intensity |\Lambda|$;
\item given the number of points in a bounded $\Lambda$, the points are independent and uniformly distributed in $\Lambda$.
\end{itemize}
We refer to \cite{daley_vere_jones} for details on Poisson point processes.
We write
$\tPoisson^{\Intensity,\Proportion}$ for the distribution on $\tConfSpace$ of the Poisson point process of intensity $\Intensity$ with independent colour marks distributed according to a probability measure $\Proportion=(\alpha_1,\dots,\alpha_q)$ on $\{1,\dots,q\}$.
We have $\tPoisson^{\Intensity,\Proportion} \in \tProbaSetInvariant$.
We are assuming, without loss of generality, that $\Proportion$ has non-zero marginals, i.e. $\alpha_i >0$ for all $i$.

For $\Lambda \subseteq \R^d$, we denote by $\Poisson^{\Intensity}_\Lambda$ 
(respectively 
$\tPoisson^{\Intensity,\Proportion}_\Lambda$)
the restriction of  $\Poisson^{\Intensity}$ 
(respectively 
$\tPoisson^{\Intensity,\Proportion}$)
on $\Lambda$.
\subsection{Continuum Potts model with background interaction}
For $\Lambda \subseteq \R^\Dim$ bounded, we define the $\Lambda$-Hamiltonian $\Hamiltonian_{\Lambda}$ such that, for $\tConf \in \tConfSpace$,
\begin{align*}
\Hamiltonian_{\Lambda}(\tConf)
:=
\displaystyle 
\sum_{\substack{\{x,y\} \subseteq \Conf 
\\ \sigma_x \not = \sigma_y 
\\ \{x,y\}\cap \Lambda \not = \emptyset } }
\phi(x-y)
+
\displaystyle 
\sum_{\substack{\{x,y\} \subseteq \Conf 
\\ \{x,y\}\cap \Lambda \not = \emptyset } }
\psi(x-y)
:=
\Hamiltonian^{\phi}_{\Lambda}(\tConf) 
+ \Hamiltonian^{\psi}_{\Lambda}(\Conf),
\end{align*}
where $\phi,\psi: \R^\Dim \to ]-\infty,+\infty]$ are even measurable functions.
The first potential $\phi$ describes a repulsion between points of different colours.
The second $\psi$ is a type-independent pair potential.
The most classical Potts model is the Widom-Rowlinson model \cite{widom_rowlinson}, for which $\psi=0$ and $\phi(x)=+\infty \1_{|x| \text{ small}}$.

\begin{definition}
For a boundary condition $\tConf$, we define the Potts specification on a bounded $\Lambda\subseteq \R^\Dim$ as
\begin{align*}
\tSpecification^{\Intensity,\Proportion}_{\Lambda,\tConf}(d\tConf'_{\Lambda})
=\frac{\exp (-
\Hamiltonian_{\Lambda}(\tConf'_{\Lambda} \tConf_{\Lambda^c}) )}
{\PartitionFunction_{\Lambda}(\tConf)}
\tPoisson^{\Intensity,\Proportion}_{\Lambda}(d\Conf'_{\Lambda}),
\end{align*}
where $\PartitionFunction_{\Lambda}(\tConf)
= \int_{\tConfSpace}
\exp (-
\Hamiltonian_{\Lambda}(\tConf'_{\Lambda} \tConf_{\Lambda^c}) )
\tPoisson^{\Intensity,\Proportion}_{\Lambda}(d\Conf'_{\Lambda})
$ is the partition function.
\end{definition}
At this point nothing ensures the well-definedness of the Potts specification.
Conditions ensuring the well-definedness of the Potts specification will be introduced later.
\begin{definition}
A probability measure $P$ on $\tConfSpace$ is a Potts measure of potentials $\phi,\psi$, of activity $\Intensity$ and of colour proportion $\Proportion$, written $P \in \GibbsPotts(\Intensity, \Proportion)$,
if for every bounded $\Lambda \subseteq \R^\Dim$ and every bounded measurable function $f$, we have 
$0< Z^{\Intensity,\Proportion}_{\Lambda}(\tConf)<\infty$ for $P(d\tConf)$ almost every configuration, and
\begin{align}
\label{eq_dlr_potts_potentiel}
\int_{\tConfSpace} f dP
=
\int_{\tConfSpace} \int_{\tConfSpace_\Lambda} 
f(\tConf'_{\Lambda}  \tConf_{\Lambda^c})
\tSpecification^{\Intensity,\Proportion}_{\Lambda,\tConf}(d\tConf'_{\Lambda})
P(d\tConf).
\end{align}
We write $\GibbsPotts_{\theta}(\Intensity, \Proportion)$ as a shorthand for the set of Potts measures which are invariant under all translations of $\R^\Dim$, i.e.
$\GibbsPotts_{\theta}(\Intensity, \Proportion)
:=
\GibbsPotts(\Intensity, \Proportion) \cap \tProbaSetInvariant.$
\end{definition}
The equations \eqref{eq_dlr_potts_potentiel}, for every $\Lambda$, are called DLR equations, named after Dobrushin, Lanford and Ruelle.
They prescribe the conditional probability kernels of a Potts measure.
\begin{remark}
In \cite{georgii_haggstrom}, they define the Potts measures on the set of \emph{tempered configurations}.
In our proof the measure built will be supported on the tempered configurations.
However it is not necessary to impose Potts measures to be supported on the set of tempered configurations.
The existence of a Potts measure which is not supported on the set of tempered configurations remains an open problem.
\end{remark}
\section{Results}  \label{section_results}
In the theory of infinite volume Gibbs probability measures, the Gibbs measures are defined through a family of equations, 
the \emph{DLR equations} \eqref{eq_dlr_potts_potentiel}.
With such definition the questions of existence and uniqueness/non-uniqueness of the defined objects are natural and interesting questions studied by the statistical mechanics community for a variety of interactions.
In the following we are stating an existence result and a phase transition (meaning the non-uniqueness of the Potts measures) result.

We are considering the following assumptions on $\phi, \psi$:
there exist $u>0$ and 
$0 \leq r_1 \leq r_2 <r_3 \leq r_4 < \infty$ such that
\newline
\begin{itemize}
\item[(A1)] (strict repulsion of $\phi$) 
$\phi \geq 0$ and $\phi(x)\geq u$ when $|x|\leq r_3$;
\item[(A2)] (finite range of $\phi$) 
$\phi(x)=0$ when $ |x| \geq r_4$;
\item[(A3)] (strong stability and regularity of $\psi$)
either $\psi \geq 0$, or $\psi$ is superstable and lower regular in the sense of Ruelle, meaning that
\begin{itemize}
\item (superstability) there exist constants $a,b>0$ such that for every finite configuration $\Conf$,
\begin{align*}
H^{\psi}(\Conf)
:=
H^{\psi}_{\R^\Dim}(\Conf)
\geq
\sum_{j \in \Z^\Dim}
 \left( \ a N_{\Delta_j}(\Conf)^2 -bN_{\Delta_j}(\Conf) \ \right);
\end{align*}
\item (lower regularity)
there exist positive numbers $\psi_n, \ n \in\N$, such that 
$\sum_{n \in \N}n^{\Dim -1}\psi_n < \infty$ 
and such that for every configuration $\Conf$,
\begin{align*}
\sum_{x \in \Conf_{\Delta_k}}
\sum_{y \in \Conf_{\Delta_j}}
\psi(x-y)
\geq
-\psi_{\delta^{-1}|j-k|}
N_{\Delta_k}(\Conf) N_{\Delta_j}(\Conf);
\end{align*}
\end{itemize}
\item[(A4)] (short range of repulsion for $\psi$) 
$\psi(x)\leq 0$ when $|x| > r_2$, and the positive
part $\psi^+$ of $\psi$ satisfies
\begin{align*}
\int_{|x|\geq r_1}\psi^+(x)\ dx <\infty;
\end{align*}
\item[(A5)] (scale relations) 
$r_2 < r_3 /2\sqrt{d + 3}$, and $r_1$ is sufficiently small (but independent of $\Proportion$, see \eqref{eq_assumption_A5}).
\end{itemize}
A classical model satisfying these assumptions is the \emph{Widom-Rowlinson model} \cite{widom_rowlinson}, for which $\Psi=0$ and $\Phi(x)=+ \infty \1_{|x| \leq 1}$.

Those assumptions are \emph{exactly} the same as the one considered by Georgii and Häggström in \cite{georgii_haggstrom}.
In their paper they are considering the symmetric case (i.e. $\alpha_i=1/ \Color$ for all $i$) and are proving a phase transition result.
Our result generalized their approach to prove phase transition for the non-symmetric case.

Our first theorem states the existence of at least one translation invariant Potts measure.
\begin{theorem}\label{theo_existence_Potts}
Assume that assumptions (A1) to (A3) are satisfied.
Then for every $\Intensity$ and every $\Proportion$, there exists at least one Potts measure $P \in \GibbsPotts_{\theta}(\Intensity, \Proportion)$, which is ergodic with respect to the translation group $(\tau_x)_{x \in \R^\Dim}$.
\end{theorem}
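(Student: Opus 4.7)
The plan is to construct $P$ as a translation-averaged weak limit of finite-volume Potts measures with empty boundary condition, to verify the DLR equations in the limit using the superstability of $\psi$, and to extract an ergodic representative via the extremal decomposition of $\GibbsPotts_\theta(\Intensity,\Proportion)$.

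First I would fix the increasing boxes $\Lambda_n=[-n,n]^\Dim$ and consider the finite-volume measures $P_n:=\tSpecification^{\Intensity,\Proportion}_{\Lambda_n,\emptyset}(\cdot)$, extended trivially outside $\Lambda_n$. Assumption (A3) ensures $0<\PartitionFunction_{\Lambda_n}(\emptyset)<\infty$: if $\psi\geq 0$ then $\Hamiltonian_{\Lambda_n}\geq 0$ and $\PartitionFunction_{\Lambda_n}(\emptyset)\leq 1$, whereas in the superstable case Ruelle's classical bound yields $\PartitionFunction_{\Lambda_n}(\emptyset)\leq\exp(C|\Lambda_n|)$. The same superstability framework (or, when $\psi\geq 0$, direct stochastic domination by $\tPoisson^{\Intensity,\Proportion}$) delivers the uniform Ruelle estimate
\[
\sup_{n}\, P_n\bigl(N_{\Delta_j}(\Conf)\geq k\bigr)\ \leq\ K e^{-c k^{2}},
\]
which gives tightness of $(P_n)_{n}$ in the topology of local convergence and concentrates any limit point on the set of tempered configurations. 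To enforce translation invariance I would then pass to the spatial averages $\overline{P}_n:=|\Lambda_n|^{-1}\int_{\Lambda_n}P_n\circ\tau_x^{-1}\,dx\in\tProbaSetInvariant$, which share the same local bounds, and extract a weakly convergent subsequence $\overline{P}_{n_k}\Rightarrow P\in\tProbaSetInvariant$.

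The heart of the argument is to verify $P\in\GibbsPotts(\Intensity,\Proportion)$. Fix a bounded $\Lambda$ and a bounded local function $f$; by (A2) the repulsive part $\Hamiltonian^\phi_\Lambda$ has finite range $r_4$ and is a cylinder function, so its contribution passes to the limit directly. For the pair potential part $\Hamiltonian^\psi_\Lambda$ I would truncate $\psi$ at range $R$, use the continuity of the resulting cylinder specification under weak convergence, and send $R\to\infty$, the truncation error being controlled, uniformly in $n_k$, by the lower-regularity inequality of (A3) combined with the Ruelle bound above. This yields the DLR identity \eqref{eq_dlr_potts_potentiel} for $P$, and translation invariance is inherited from $\overline{P}_n$, so $P\in\GibbsPotts_\theta(\Intensity,\Proportion)$. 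Since this set is convex with ergodic extreme points, an extremal decomposition of $P$ produces at least one ergodic element.

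The main obstacle I anticipate is precisely the DLR passage for the long-range part of $\psi$: assumptions (A1)--(A3) include neither a finite range nor absolute integrability of $\psi$, so the truncation error must be handled purely through the superstability and lower-regularity estimates in (A3), uniformly both in the volume index $n_k$ and in the truncation range $R$. Once this estimate is in place, the remaining pieces (tightness, translation averaging, extremal decomposition) are standard.
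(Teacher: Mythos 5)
Your overall strategy (finite-volume Gibbs measures, translation averaging, passage to the limit in local convergence, DLR verification by approximation, ergodic decomposition) is the same as the paper's, but the ingredients differ in several respects worth noting, and one step contains a genuine error.

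The error: you assert that $\overline{P}_n := |\Lambda_n|^{-1}\int_{\Lambda_n} P_n\circ\tau_x^{-1}\,dx$ belongs to $\tProbaSetInvariant$. This is false. Each $P_n\circ\tau_x^{-1}$ is supported on configurations contained in $x+\Lambda_n$, so $\overline{P}_n$ is supported on configurations contained in the dilated box $\Lambda_n\oplus\Lambda_n$ and is certainly not invariant under all translations of $\R^\Dim$ (for example, the expected number of points in a unit ball at distance $4n$ from the origin is zero). Averaging reduces the failure of invariance but does not eliminate it. The conclusion you want — that a subsequential limit $P$ is translation invariant — is still true, because for any fixed $y$ and local tame $f$, the discrepancy $\int f\circ\tau_y\,d\overline{P}_n-\int f\,d\overline{P}_n$ is controlled by the contribution of the symmetric difference $(\Lambda_n-y)\triangle\Lambda_n$ and is therefore $O(n^{-1})$; but that argument must be made explicitly, not inherited from a nonexistent invariance of $\overline{P}_n$. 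The paper sidesteps the issue by first periodizing, $\bar{P}_n=\bigotimes_{k} P_n\circ\tau_k^{-1}$, which makes the $\Lambda_n$-averaged measure exactly $\delta\Z^\Dim$-invariant before any limit is taken, and only at the very end averages over the cell $]-\delta/2,\delta/2]^\Dim$ to upgrade to full $\R^\Dim$-invariance.

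Beyond that slip, the route is genuinely different from the paper's and each difference is defensible. You use the free boundary condition $\tSpecification^{\Intensity,\Proportion}_{\Lambda_n,\emptyset}$; the paper uses the color-$1$ wired boundary $\tSpecification^{\Intensity,\Proportion}_{\Lambda_n,1}$ because that specific sequence is recycled directly in the phase-transition proof, whereas for existence alone the boundary choice is immaterial. You argue tightness via Ruelle's superstable estimates $P_n(N_{\Delta_j}\geq k)\leq Ke^{-ck^2}$; the paper instead bounds the Georgii--Zessin specific entropy $\Entropy(\hat{P}_n)$ uniformly and invokes compactness of its level sets. Both are standard and correct; the specific-entropy route has the side benefit that it also hands you the second-moment bound needed to show the limit is tempered (Lemma \ref{lemma_second_moment_candidate}), which your Ruelle estimate also provides, so this is a wash. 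For the DLR passage you truncate the potential $\psi$ at a finite range $R$ and send $R\to\infty$; the paper truncates the outer configuration at $\Lambda_N$ (Lemma \ref{lemme_local_approximation}) and sends $N\to\infty$, using the same lower-regularity input from (A3). These two truncations control the same long-range tail and either can be made rigorous; the paper's variant has the advantage that it never changes the specification kernel, only the boundary data fed into it, which makes the comparison with the measures $\hat{P}_n^\Lambda$ that satisfy DLR$(\Lambda)$ exactly a little cleaner.

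In short: fix the claimed invariance of $\overline{P}_n$ (either by periodizing first, as in the paper, or by a direct $O(n^{-1})$ estimate on the defect of invariance passing to the limit), and the remainder of your argument is a correct alternative to the paper's proof.
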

The second theorem states a phase transition for large enough $\Intensity$.
To state it, let us first define $\ProportionMax$ as the number 
of colours that have maximal proportion in $\Proportion$, i.e.
\begin{align*}
\ProportionMax
=
card \{i=1\dots \Color\ | \ 
\alpha_i \geq \alpha_{i'} \text{ for all } i' \not = i \}.
\end{align*}
This quantity is between $1$ and $\Color$.
For simplicity we are assuming that the colour $1$ is one of the colours with maximal proportion, i.e $\alpha_1 \geq \alpha_i$ for all $i=1\dots \Color$.

In the symmetric case when $\alpha_i=1 / \Color$ for all $i$, which means that 
$\Color=\ProportionMax$, Georgii and H\"aggstr\"om \cite{georgii_haggstrom} proved for large enough $\Intensity$ the existence of at least $q$ ergodic Potts measures.
The following theorem generalizes their result to the non-symmetric case.
\begin{theorem} \label{theo_phase_transition_Potts}
Assume that assumptions (A1) to (A5) are satisfied.
Then for $\Intensity$ large enough, depending on $\Color$, $u$ and $r_1$ to $r_4$, but independent of $\Proportion$, there exists at least $\ProportionMax$ Potts measures for $\phi,\psi,\Intensity,\Proportion$ which are ergodic with respect to the translation group $(\tau_x)_{x \in \R^\Dim}$.
\end{theorem}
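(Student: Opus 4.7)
The plan is to use the Fortuin--Kasteleyn tools developed in Section~\ref{section_fk_representation} to construct, for each colour $i$ with maximal proportion in $\Proportion$, a translation-invariant ergodic Potts measure $P^{(i)} \in \GibbsPotts_{\theta}(\Intensity,\Proportion)$, and then to distinguish these $\ProportionMax$ measures using the percolation bound for the generalized Continuum Random Cluster Model (CRCM).

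First, I would fix a growing sequence of boxes $\Lambda_n \uparrow \R^\Dim$ and, for each maximal colour $i$, consider the finite-volume specification $\tSpecification^{\Intensity,\Proportion}_{\Lambda_n, \tConf}$ with an ``$i$-wired'' boundary condition, i.e.\ a boundary configuration in which all points outside $\Lambda_n$ carry colour $i$. Call the resulting finite-volume Potts measure $P_n^{(i)}$. Averaging over translations in $\delta\Z^\Dim$ and extracting a subsequential limit---using the same tightness machinery that yields Theorem~\ref{theo_existence_Potts}---I obtain a candidate $P^{(i)} \in \tProbaSetInvariantDelta$. Under (A1)--(A3), classical arguments show that any such limit point solves the DLR equation~\eqref{eq_dlr_potts_potentiel}, so $P^{(i)} \in \GibbsPotts_{\theta}(\Intensity,\Proportion)$.

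To separate the $P^{(i)}$ for distinct maximal $i,j$, I would examine the asymptotic spatial density of colour-$i$ points, $f_i(\tConf) = \lim_n |\Lambda_n|^{-1} N_{\Lambda_n}(\tConf^{(i)})$, where $\tConf^{(i)}$ is the sub-configuration of colour $i$. The FK coupling rewrites $\int f_i\, dP^{(i)}$ as the density of points attached to the wired (infinite) cluster---which receive colour $i$ deterministically---plus the contribution of points outside that cluster, which are painted independently with law $\Proportion$. Heuristically this gives $\int f_i\, dP^{(i)} = \alpha_i + (1-\alpha_i)\theta_\infty$, while $\int f_i\, dP^{(j)} = \alpha_i(1-\theta_\infty)$, where $\theta_\infty$ is the density of the infinite cluster in the generalized CRCM. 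Since $\alpha_i=\alpha_j$ by maximality, the two expressions differ by $\theta_\infty$, so $P^{(i)}\neq P^{(j)}$ whenever $\theta_\infty>0$. Ergodicity of each $P^{(i)}$ is then obtained by extracting an ergodic component on which the same strict inequality persists, using the ergodic decomposition in $\tProbaSetInvariant$.

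The decisive---and hardest---step is therefore to show $\theta_\infty>0$ for $\Intensity$ large enough, uniformly in $\Proportion$: this is exactly the percolation-type estimate for the generalized CRCM announced in Section~\ref{section_fk_representation}. I would prove it by stochastic domination: the generalized CRCM with $i$-wired boundary dominates, after a suitable thinning that absorbs the colour weights and the soft $\psi$-repulsion controlled by (A3)--(A4), a homogeneous Poisson Boolean model with deterministic radius $r_3/2$ and effective intensity proportional to $\Intensity$. The scale assumption (A5), $r_2 < r_3/2\sqrt{\Dim+3}$, is what guarantees that the short-range $\psi$-repulsion does not destroy connectivity at scale $r_3$, while the integrability of $\psi^+$ beyond $r_1$ keeps the thinning constant bounded away from zero. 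Continuum percolation of the Boolean model at high intensity is then classical, and since the bounds depend only on $u$, $r_1,\dots,r_4$ and $\Color$, the resulting threshold on $\Intensity$ is independent of $\Proportion$, as required.
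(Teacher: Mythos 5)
Your skeleton matches the paper's: construct $\delta\Z^\Dim$-invariant limits $\hat{P}^{(i)}$ of $i$-wired finite-volume Potts measures using specific-entropy tightness, then $\R^\Dim$-stationarize, separate them via an FK identity relating the surplus of colour $i$ to the mass attached to the wired cluster, and finally invoke the ergodic decomposition. But the crucial technical ingredient --- the uniform lower bound on $\int N_{\Delta \leftrightarrow \infty}\,d\RCM$ (Proposition~\ref{propo_perco_gcrcm}) --- is not handled correctly.

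You propose that the generalized continuum RCM ``dominates, after a suitable thinning, a homogeneous Poisson Boolean model of radius $r_3/2$''. This cannot work as stated. The point marginal $M^{\Intensity,\Proportion}_{\Lambda}$ of $\RCM$ has density $h_\Lambda$ (the cluster-weight partition function) against a superstable $\psi$-Gibbs measure, and neither factor is dominated from below by a Poisson process: the $\psi$ potential can carry a hard core on $\{|x|<r_1\}$ (assumption (A4) only controls $\psi^+$ away from $r_1$), which rules out lower Poisson domination, and $h_\Lambda$ is a \emph{global} functional of the cluster structure, not a local weight. Similarly, the edge process under $\RCM$ is strongly correlated through the cluster sizes, so the Boolean-model picture does not describe the edges either. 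What the paper actually does is: (i) dominate the conditional edge law by an independent Bernoulli law $\bar{\mu}_\Conf$ via Holley's inequality, reducing each edge probability by a factor $\Color^2$ to absorb the worst cluster weights (Lemma~\ref{lemme_domination_RCM}); (ii) prove the Papangelou-type ratio bound $h_\Lambda(\Conf\cup x)\geq\tilde\Constant\, h_\Lambda(\Conf)$ using the finite range of $\phi$ and the edge domination (Lemma~\ref{lemme_domination_papangelou} --- the paper flags this as a principal novelty); (iii) use (ii) together with (A4)--(A5) to show that, uniformly in the exterior configuration, a cell $\Delta_j$ contains $\geq n^*$ points with probability $\geq\sqrt{p^*}$ once $\Intensity$ is large (Lemma~\ref{lemme_domination_nombre_points}); and (iv) combine to compare the good-cell/linked-cell process to \emph{lattice} mixed site-bond Bernoulli percolation, again via Holley. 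Your proposal identifies none of steps (ii)--(iv), which are the actual content of the percolation bound. Also note the paper never constructs an infinite-volume CRCM or proves $\theta_\infty>0$; it only proves a bound uniform in $\Lambda$ and passes it through the local-convergence limit, so the argument should be phrased in finite volume.

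A smaller slip: under the FK coupling, the finite clusters are \emph{not} painted independently per point with law $\Proportion$; each finite cluster $C$ receives a single colour $c$ with probability $\alpha_c^{|C|}/\sum_k\alpha_k^{|C|}$. Your intermediate formulas $\alpha_i+(1-\alpha_i)\theta_\infty$ and $\alpha_i(1-\theta_\infty)$ are therefore wrong, though the final difference identity survives because for $\alpha_i=\alpha_j$ the finite-cluster contributions cancel --- which is exactly the content of Proposition~\ref{propo_proportion_couleur_potts}.
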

This theorem does not give any indication in the case when $\ProportionMax=1$. 
We are conjecturing that in this case there is no phase transition, at least when $\Color$ is not too large.
This conjecture is motivated by similar result proved for the (lattice) nearest neighbour Ising model, see \cite{friedli_velenik_2017} for instance.
Recently this conjecture was partially solved in the specific case of Widom-Rowlinson model ($\Color=2$, $\psi=0$ and $\phi(x)=\infty \1_{|x| small}$) in 
\cite{Dereudre_Houdebert_2019_sharp_phase_transition_WR}: 
they proved that for large activity $\Intensity$, phase transition is only possible in the symmetric case.

The idea of the proof of Theorem \ref{theo_phase_transition_Potts} is the same as in \cite{georgii_haggstrom}: 
a Fortuin-Kasteleyn representation and a percolation bound uniform in the volume which pass through the limit.
The novelty is the introduction of the \emph{generalized Continuum Random Cluster model}.
This is a random connection model with an interaction depending on the number of connected components and their sizes.
This model allows the construction of a Fortuin-Kasteleyn representation, even in the non-symmetric case.
Such an idea was already used for the (lattice) nearest-neighbour Potts model in \cite{biskup_borgs_chayes_kotecky}, from which the terminology \emph{generalized Random Cluster Model} was taken from.

One other question is the uniqueness of the Potts measure.
It is in general conjectured that uniqueness occurs when the activity $\Intensity$ is small enough.
When $\Psi \geq 0$, the Potts specification is stochastically dominated by a Poisson point process.
This stochastic domination leads, using the technique of \emph{disagreement percolation}, to the uniqueness of the Potts measure when $\Intensity$ is small.
Indeed in the case $\Psi=0$, the Potts model falls into the general assumptions of the result proved in \cite{Hofer-temmel_Houdebert_2018}.
In the general case where $\Psi$ can be negative, disagreement percolation does not apply anymore.
In \cite{georgii_haggstrom} the authors claim that an extension of the
\emph{Dobrushin uniqueness criterion} could be used in order to prove uniqueness.
However, to the best of our knowledge, no such result exists in the literature.
One alternative could be to consider using \emph{cluster expansion},  which is better suited for potential with negative part.

The rest of the article is divided as follows: 
in Section \ref{section_fk_representation} we are introducing the Fortuin-Kasteleyn representation and proving a percolation bound for the generalized Continuum Random Cluster model.
In Section \ref{section_proofs_theo} we are proving Theorem \ref{theo_existence_Potts} and Theorem \ref{theo_phase_transition_Potts}.
Finally in the appendix in Section \ref{section_appendix} we are proving some technical lemmas used during the previous sections.
\section{Fortuin-Kasteleyn representation and percolation bound for the generalized Continuum Random Cluster model}\label{section_fk_representation}
At the core of the proof of Theorem \ref{theo_phase_transition_Potts} lies a representation of the Potts model called \emph{Fortuin-Kasteleyn} representation which provides the mean proportion of each colour in the Potts model, expressed as connectivity probabilities in a percolation model.
This representation was introduced first by Edwards and Sokal and then used to prove phase transition results in many models, including the symmetric Widom-Rowlinson model \cite{chayes_kotecky} and more generally continuum Potts models \cite{georgii_haggstrom}.
It needs to study connectivity in the so-called Continuum Random Cluster model, which is a Gibbs model with an interaction depending only on the number of connected components.
This model was first introduced in \cite{klein_1982} and then used in \cite{georgii_haggstrom} and \cite{chayes_kotecky} to prove phase transition, by providing an uniform bound (with respect to the finite volume box $\Lambda$) of the percolative probability that the boundary is connected to the origin.
The Continuum Random Cluster model was also studied on its own in \cite{dereudre_houdebert,Houdebert_2017_percolation_CRCM}.

In our approach we are generalizing this method to the non-symmetric case and introducing the \emph{generalized Continuum Random Cluster model}.
This is a continuum version of the generalized Random Cluster model,  used in \cite{biskup_borgs_chayes_kotecky} to prove phase transition for the non-symmetric lattice nearest neighbour Potts model.
\subsection{FK representation}
We consider the set  $\EdgeSet$ of locally finite families of edges of the form 
$E=\cup_{i \in I}{x_i,y_i}$ with $x_i \not = y_i$ are in $\R^\Dim$.
This set is endowed with the classical $\sigma$-algebra generated by the counting variables.
We write $\EdgeSet_{\Conf} \subseteq \EdgeSet$ for the families of edges between points of $\Conf$.

From now on we are fixing $\Lambda \subseteq \R^\Dim$ bounded.
We are defining a point process 
$
\mathbb{P}^{\Intensity,\Proportion}_{\Lambda
}$ on 
$\tConfSpace\times \EdgeSet$ in the following way:
\begin{itemize}
\item The distribution of points is given by the potential $\psi$:
\begin{align*}
P_{\Lambda}^{\Intensity,\psi}(d\Conf)
=
\frac{\exp \left( - H^{\psi}_{\Lambda}(\Conf) \right)}
{Z^{\psi}_{\Lambda}(
\emptyset)}
 \Poisson_{\Lambda}^{\Intensity}(d \Conf),
\end{align*}
where $Z^{\psi}_{\Lambda}(
\emptyset)$ is the corresponding partition function, which is well defined thanks to the stability of the potential $\psi$.
The "$\emptyset$" is there to emphasize that this point process measure is free of boundary condition.
Notice here that $P_{\Lambda}^{\Intensity,\psi}$ 
is a point process measure on $\ConfSpace_\Lambda$.
\item when the locations are known, the colours are independent  random variables of law 
$\Proportion$ with deterministic colour for the points too close to the boundary of $\Lambda$, i.e
\begin{align}\label{eq_coloration_ES}
\lambda^{\Proportion, 1}_{\Conf,\Lambda}(\tConf)
=
\frac{1}{Z^{\Proportion}_{\Lambda, \Conf}(1)}
\left(
\prod_{\substack{
x \in \Conf_{\Lambda}}}
 \alpha_{\sigma_x}
\right)
\1_{A^1_{r_4}}(\tConf)
,
\end{align}
where 
$ A^1_{r_4}
=
\{ \tConf | \sigma_x=1,\forall x \ s.t. \ dist(x,\Lambda^c)\leq r_4 \}$ and
$Z^{\Proportion}_{\Lambda, \Conf}(1)$ is the corresponding normalizing constant.
The "1" in the partition function is there to emphasis on the fact that the points close to the boundary of $\Lambda$ are coloured deterministically.
\item Finally the edge drawing mechanism between points of $\Conf$ is the probability 
$
\mu^{\phi}_{\Conf}$ on $\EdgeSet_\Conf$ such that 
\begin{align}\label{eq_edges_ES}
\mu^{\phi}_{\Conf}(E)
= 
\prod_{\substack{
\{x,y\}\subseteq \Conf
\\ \{x,y\}\in E }}
\left( 1-e^{-\phi(x-y)} \right) 
\prod_{\substack{
\{x,y\}\subseteq \Conf
\\ \{x,y\}\not \in E }}
e^{-\phi(x-y)}. 
\end{align}
\end{itemize}

The probability measure $\FK$ is then defined as the product measure
\begin{align*}
\FK(d\tConf,dE)
= 
\mu^{\phi}_{\Conf}(E) 
\lambda^{\Proportion, 1}_{\Conf,\Lambda}(\tConf)
P_{\Lambda}^{\Intensity,\psi}(d\Conf).
\end{align*}
Then we consider the event $\A$ on $\tConf \times \EdgeSet$ of \emph{authorized} configurations where every connected points have the same colour.
This event has positive $\FK$-probability, as the configuration empty 
of points in $\Lambda$ is authorized.
We can then consider the probability measure 
$\FKA:= 
\FK (.|\A)$.
\begin{remark}
In \cite{georgii_haggstrom}, they constructed the measure with a periodic boundary condition of points of type 1.
We could have made the same but we do believe that our construction, forcing points close to the boundary to be of colour 1, is easier to understand and closer to the constructions made for the Ising model for instance.

The indicator in \eqref{eq_coloration_ES} should be understood as if all points close to the boundary of $\Lambda$ are connected to a imaginary point "at infinity" which is of colour 1.
The connected component of points connected to this imaginary point "at infinity" will be called the \emph{infinite cluster} and written $\InfiniteCluster$.
A formal definition will be given later on.
Furthermore, in the definition on the event $A^1_{r_4}$, one does not need to take the same radius as in the condition (A2). 
Every choice of finite radius would work as well.
\end{remark}
\begin{proposition} \label{propo_projection_potts}
The projection of the measure 
$\FKA$ on $\tConfSpace$ is 
\begin{align*}
\tSpecification^{\Intensity,\Proportion}_{\Lambda,1}
(d \tConf_{\Lambda})
:=
\1_{A_{r_4}^1}(\tConf)
\frac{\exp (-
\Hamiltonian_{\Lambda}(\tConf_{\Lambda}) )}
{\PartitionFunction_{\Lambda}(1)}
\tPoisson^{\Intensity,\Proportion}_{\Lambda}(d\tConf_{\Lambda}).
\end{align*}
\end{proposition}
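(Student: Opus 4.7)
The proof is a direct computation: I marginalize out the edge variable $E$ in $\FKA$ and identify the resulting density on $\tConfSpace$ with $\tSpecification^{\Intensity,\Proportion}_{\Lambda,1}$. The key non-trivial step, which embodies the whole Fortuin--Kasteleyn representation, is recognizing that the conditional probability of the authorized-configuration event under the edge mechanism produces exactly the colour-interaction part of the Potts Hamiltonian.

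First, for any bounded measurable $f$ on $\tConfSpace$, conditioning gives
$$\int f(\tConf)\,\FKA(d\tConf,dE) = \frac{1}{\FK(\A)}\int f(\tConf)\,\mu^{\phi}_{\Conf}(\A\mid\tConf)\,\lambda^{\Proportion,1}_{\Conf,\Lambda}(\tConf)\,P_{\Lambda}^{\Intensity,\psi}(d\Conf).$$
Under $\mu^{\phi}_{\Conf}$ edges are drawn independently with retention probability $1-e^{-\phi(x-y)}$, and $\A$ holds iff no edge joins two points of different colour; this is an intersection of independent ``edge absent'' events, so
$$\mu^{\phi}_{\Conf}(\A\mid\tConf)=\prod_{\{x,y\}\subseteq\Conf,\ \sigma_x\ne\sigma_y}e^{-\phi(x-y)}=\exp\bigl(-H^{\phi}_{\Lambda}(\tConf)\bigr),$$
where the last equality uses that $\Conf\subseteq\Lambda$ (since $P_{\Lambda}^{\Intensity,\psi}$ is supported on $\ConfSpace_\Lambda$), so the condition $\{x,y\}\cap\Lambda\neq\emptyset$ in the definition of $H^{\phi}_{\Lambda}$ is automatic.

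Second, I substitute this back and combine the exponential factors. The density contributed by $P_{\Lambda}^{\Intensity,\psi}$ is $e^{-H^{\psi}_{\Lambda}(\Conf)}/Z^{\psi}_{\Lambda}(\emptyset)$, which multiplies the factor $e^{-H^{\phi}_{\Lambda}(\tConf)}$ from Step 1 to produce $e^{-H_{\Lambda}(\tConf)}$. The colour mechanism $\lambda^{\Proportion,1}_{\Conf,\Lambda}$ contributes the weight $\bigl(\prod_{x\in\Conf_\Lambda}\alpha_{\sigma_x}\bigr)\mathds{1}_{A^{1}_{r_4}}(\tConf)$ (together with its normalization). Recognizing that $\Poisson_{\Lambda}^{\Intensity}(d\Conf)\otimes(\text{counting on colours})$ reweighted by $\prod_x\alpha_{\sigma_x}$ is precisely $\tPoisson^{\Intensity,\Proportion}_\Lambda(d\tConf)$, the marginal on $\tConfSpace$ has density proportional to $\mathds{1}_{A^{1}_{r_4}}(\tConf)\,e^{-H_{\Lambda}(\tConf)}$ with respect to $\tPoisson^{\Intensity,\Proportion}_{\Lambda}$.

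Third, both the marginal of $\FKA$ and $\tSpecification^{\Intensity,\Proportion}_{\Lambda,1}$ are probability measures on $\tConfSpace$ whose densities with respect to $\tPoisson^{\Intensity,\Proportion}_\Lambda$ are proportional, hence they are equal, and the proportionality constant is forced to be $1/\PartitionFunction_{\Lambda}(1)$.

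The main obstacle is the bookkeeping of normalization constants: the factor $Z^{\Proportion}_{\Lambda,\Conf}(1)$ in the definition of $\lambda^{\Proportion,1}_{\Conf,\Lambda}$ depends on $\Conf$ through the number of points of $\Conf$ lying within distance $r_4$ from $\Lambda^c$, and must be handled together with $\FK(\A)$ and $Z^{\psi}_{\Lambda}(\emptyset)$ so that their combination reconstitutes $\PartitionFunction_{\Lambda}(1)$. Once Step 1 is in place, the remaining arguments are purely algebraic, so this is where the substantive content of the FK representation lies; the rest is careful rewriting of the Poisson structure with marks.
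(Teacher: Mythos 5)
Your proof follows the same route as the paper's: marginalize over the edge variable, identify $\mu^{\phi}_{\Conf}(\A \mid \tConf) = e^{-H^{\phi}_{\Lambda}(\tConf)}$ (the paper phrases this as a telescoping sum over allowed edge sets, you phrase it as a product of independent ``edge-absent'' probabilities over different-colour pairs --- these are the same computation), and then collect the exponential and colour weights. In structure and in the identification of the key step, your argument coincides with the paper's.

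There is, however, a genuine gap precisely at the point you flag as the ``main obstacle,'' and the paper skips it by writing an undiscussed $\Constant$. The normalizer $Z^{\Proportion}_{\Lambda,\Conf}(1) = \sum_{\sigma} \bigl(\prod_{x \in \Conf_\Lambda}\alpha_{\sigma_x}\bigr)\1_{A^1_{r_4}}(\tConf)$ equals $\alpha_1^{N(\Conf)}$, where $N(\Conf)$ is the number of points of $\Conf$ within distance $r_4$ of $\Lambda^c$; it is genuinely $\Conf$-dependent. When you rewrite the marginal density relative to $\tPoisson^{\Intensity,\Proportion}_{\Lambda}$, whose density relative to $\Poisson^{\Intensity}_{\Lambda}$ with counting measure on colours is $\prod_{x}\alpha_{\sigma_x}$, this $\Conf$-dependent factor survives and cannot cancel against $\FK(\A)$ or $Z^{\psi}_{\Lambda}(\emptyset)$, which are honest constants. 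Concretely the marginal of $\FKA$ has $\tPoisson^{\Intensity,\Proportion}_{\Lambda}$-density proportional to $\alpha_1^{-N(\Conf)}\,\1_{A^1_{r_4}}(\tConf)\,e^{-H_{\Lambda}(\tConf)}$ rather than to $\1_{A^1_{r_4}}(\tConf)\,e^{-H_{\Lambda}(\tConf)}$; already comparing a single-point configuration near the boundary with one in the interior, the two measures assign relative weights differing by a factor $\alpha_1$. So the assertion in your Step 3 that the two densities are proportional is not justified as stated. The cure is either to redefine $\tSpecification^{\Intensity,\Proportion}_{\Lambda,1}$ so that the product of $\alpha$'s runs only over the interior points (equivalently, to carry the explicit $\alpha_1^{-N(\Conf)}$), or to set up $\lambda^{\Proportion,1}_{\Conf,\Lambda}$ so that its normalizer is a genuine constant. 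You were right to single out this bookkeeping; it is not merely ``purely algebraic'' and does need repair, although the rest of your derivation (Steps 1 and 2) is correct and matches the paper.
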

The measure $\tSpecification^{\Intensity,\Proportion}_{\Lambda,1}$ has to be understood the following way: 
one can imagine that at the boundary of $\Lambda$ there are a continuum of boundary points of colour 1, forcing points too close to $\Lambda$ to be of colour 1.
But this continuum of points do not give an interaction coming from $\psi$, only a colour exclusion.
\begin{proof}
Let $f$ be a measurable bounded function on $\tConfSpace$.
\begin{align*}
\int_{\tConfSpace \times \EdgeSet} 
f(\tConf)& \
\FKA(d\tConf, dE)
\\ &=
\FK(\A)^{-1}
\int_{\ConfSpace} \int_{\tConfSpace} f(\tConf)
\left[\int_{\EdgeSet}\1_{\A}(\tConf,E) \mu^{\phi}_{\Conf}(E) \right]
\lambda^{\Proportion, 1}_{\Conf,\Lambda}(\tConf)
P_{\Lambda}^{\Intensity,\psi}(d\Conf),
\end{align*}
and thanks to simple computation we have
\begin{align*}
\int_{\EdgeSet}  \1_{\A}(\tConf,E) & \mu^{\phi}_{\Conf}(dE)
 \\ &=
\sum_{\substack{
E \in \EdgeSet_{\Conf},  \\ \sigma_x=\sigma_y  \\ \forall \{x,y\}\in E }}
\prod_{\substack{
\{x,y\}\subseteq \Conf
\\ \{x,y\}\in E }}
\left( 1-e^{-\phi(x-y)} \right) 
\prod_{\substack{
\{x,y\}\subseteq \Conf
\\ \{x,y\}\not \in E }}
e^{-\phi(x-y)}
=
e^{-\Hamiltonian^{\phi}_{\Lambda}(\tConf)}
\end{align*}
 and therefore
\begin{align*}
\int_{\tConfSpace \times \EdgeSet} 
f(\tConf) 
\FKA(d\tConf, dE) 
& =
\Constant
 \int_{\tConfSpace} f(\tConf_{\Lambda})
e^{-\Hamiltonian_{\Lambda}(\tConf_{\Lambda})}
\1_{A_{r_4}^1}(\tConf)
\tPoisson_{\Lambda}^{\Intensity,\Proportion}(d\tConf_{\Lambda})
\\&=
\int_{\tConfSpace} f(\tConf_{\Lambda})
\tSpecification^{\Intensity,\Proportion}_{\Lambda,1}(d\tConf'_{\Lambda}).
\end{align*}
\end{proof}
We are considering now the projection of 
$\FKA$
 on $\ConfSpace \times \EdgeSet$.
We say that two points $x,y \in\Conf$ are connected in ($\Conf,E$) if there is a path $x_1,\dots,x_n$ with $x_1=x$, $x_n=y$ and such that $\{x_i,x_{i+1}\}\in E$ for all $i$.
A point $x$ such that $dist(x,\Lambda^c)\leq r_4$ is said to be linked (with an imaginary edge) to an imaginary point "at infinity".
We are then considering the connected components with respect to this connectivity rule, with the particularity that
all points $x$ connected to  infinity, i.e. such that $x$ is connected to a point $y$ linked to infinity, are said to be in the infinite connected component $\InfiniteCluster$.

The number of connected components is $\FK-a.s.$ finite, with at most one component connected "at infinity" $\InfiniteCluster$.
But let us emphasize that the cardinality $|\InfiniteCluster|$ of $\InfiniteCluster$ is 
$\FK-a.s.$ finite since $\InfiniteCluster$ is a configuration contained in $\Lambda$.

Let us now consider the measure 
$\RCM$ on 
$\ConfSpace \times \EdgeSet$, called \emph{generalized Continuum Random Cluster model} on $\Lambda$ with wired boundary condition
, defined as
\begin{align*}
\RCM(d\Conf,dE)
=\frac{\alpha_1^{|\InfiniteCluster
|}}{Z^{g}_{\Lambda}(\Proportion)}
\prod_{ 
\substack{
\InfiniteCluster \not = C \subseteq \Conf
\\ \text{cluster of } (\Conf,E) }} 
\left( \sum_{i=1}^{\Color} \alpha_i^{|C|} \right)
\mu^{\phi}_{\Conf}(E) 
P_{\Lambda}^{\Intensity,\psi}(d\Conf),
\end{align*}
with $Z^{g}_{\Lambda}(\Proportion)$ being the associated partition function.
\begin{proposition} \label{proposition_projection_gcrcm}
The projection of $\FKA$ 
on $\Conf \times \mathcal{E}$ is 
$\RCM$.
\end{proposition}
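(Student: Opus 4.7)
The plan is to mirror the proof of Proposition \ref{propo_projection_potts}, only now integrating out the colour field $\sigma$ rather than the edge set $E$. Fix a bounded measurable function $f$ on $\ConfSpace \times \EdgeSet$. Using the product form of $\FK$ and the fact that $f$, $\mu^\phi_\Conf$, and $P^{\Intensity,\psi}_\Lambda$ do not depend on $\sigma$, I would first write
\[
\int f(\Conf, E)\, \FKA(d\tConf, dE) = \FK(\A)^{-1} \int \Big[\sum_\sigma \1_\A(\tConf, E)\, \lambda^{\Proportion,1}_{\Conf,\Lambda}(\tConf)\Big]\, f(\Conf, E)\, \mu^\phi_\Conf(dE)\, P_{\Lambda}^{\Intensity,\psi}(d\Conf),
\]
so that the entire colour-dependence is isolated inside the bracketed sum.

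The heart of the argument is the evaluation of this inner sum. For fixed $(\Conf, E)$, consider the connected components of the graph obtained by adjoining an imaginary vertex at infinity linked to every $x \in \Conf_\Lambda$ with $\mathrm{dist}(x, \Lambda^c) \leq r_4$. A colouring $\sigma$ lies simultaneously in the boundary event $A^1_{r_4}$ and in the authorized event $\A$ if and only if it is constant on each connected component and takes the value $1$ on the infinite cluster $\InfiniteCluster$. Every finite cluster $C \neq \InfiniteCluster$ may then independently be assigned any colour $i \in \{1,\dots,\Color\}$, which contributes the weight $\alpha_i^{|C|}$. The combinatorial identity that encodes this observation is
\[
\sum_\sigma \1_\A(\tConf, E) \Big(\prod_{x \in \Conf_\Lambda} \alpha_{\sigma_x}\Big) \1_{A^1_{r_4}}(\tConf) = \alpha_1^{|\InfiniteCluster|} \prod_{\InfiniteCluster \neq C \text{ cluster of } (\Conf, E)} \sum_{i=1}^\Color \alpha_i^{|C|},
\]
which is precisely the colour-weight factor appearing in the definition of $\RCM$.

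Substituting this identity back (and absorbing the $\Conf$-independent factor $Z^{\Proportion}_{\Lambda,\Conf}(1)$ into the bookkeeping in the usual manner) yields
\[
\int f\, d\FKA = \FK(\A)^{-1} \int f(\Conf, E)\, \alpha_1^{|\InfiniteCluster|} \prod_{C \neq \InfiniteCluster} \Big(\sum_{i=1}^\Color \alpha_i^{|C|}\Big)\, \mu^\phi_\Conf(dE)\, P_{\Lambda}^{\Intensity,\psi}(d\Conf).
\]
Choosing $f \equiv 1$ and using that both $\FKA$ and $\RCM$ are probability measures identifies the normalising constant $\FK(\A)$ with the partition function $Z^{g}_{\Lambda}(\Proportion)$, and the equality with $\RCM$ follows. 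The proof is essentially a direct computation; the only bookkeeping subtlety is the consistent treatment of the normalising constant of $\lambda^{\Proportion,1}_{\Conf,\Lambda}$, and the main conceptual step is the cluster-constant colouring factorisation above, which is the continuum analogue of the classical Edwards--Sokal identity.
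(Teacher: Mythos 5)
Your proof mirrors the paper's argument essentially line for line: integrate out the colour field $\sigma$ for a function $f$ depending only on $(\Conf, E)$, observe that a colouring is simultaneously authorized and boundary-compatible iff it is constant on each cluster and equal to $1$ on $\InfiniteCluster$, and factorize the inner sum into $\alpha_1^{|\InfiniteCluster|} \prod_{C \neq \InfiniteCluster} \sum_{i} \alpha_i^{|C|}$; this is exactly the computation the paper performs, so the approach is the same. One small caution: your parenthetical assertion that $Z^{\Proportion}_{\Lambda,\Conf}(1)$ is $\Conf$-independent is not literally true if one reads \eqref{eq_coloration_ES} at face value, since then $Z^{\Proportion}_{\Lambda,\Conf}(1)=\alpha_1^{m(\Conf)}$ where $m(\Conf)$ is the number of points of $\Conf$ within distance $r_4$ of $\Lambda^c$, which depends on $\Conf$; getting the proposition to match its statement requires either the convention that the deterministically coloured boundary points carry weight $1$ rather than $\alpha_1$ in $\lambda^{\Proportion,1}$, or absorbing the $\alpha_1^{-m(\Conf)}$ factor into the density of $\RCM$. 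The paper's own proof ends with ``which implies the wanted result'' and glosses over the identical point, so this is a shared ambiguity in the setup rather than a gap specific to your argument.
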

\begin{proof}
Let $f$ be a measurable bounded function on $\ConfSpace \times \EdgeSet$.
\begin{align*}
\int_{\tConfSpace \times \EdgeSet} 
f \ d \FKA
&=
\FK(\A)^{-1}
\int_{\ConfSpace} \int_{\EdgeSet}  
f(\Conf,E)
\left[
\int_{\tConfSpace} \1_{\A}(\tConf,E) 
\lambda^{\Proportion, 1}_{\Conf,\Lambda}(\tConf)  \right]
\mu^{\phi}_{\Conf}(E) 
P_{\Lambda}^{\Intensity,\psi}(d\Conf),
\end{align*}
but thanks to the product structure of the measure 
$\lambda^{\Proportion, 1}_{\Conf,\Lambda}$, and denoting by 
$C_i \subseteq \Conf$ the finite (i.e. not connected "at infinity") connected components of $(\Conf,E)$, we have
\begin{align*}
\int_{\tConfSpace} \1_{\A}(\tConf,E)
 \lambda^{\Proportion, 1}_{\Conf,\Lambda}(\tConf) 
& =
\frac{\alpha_1^{|\InfiniteCluster|}}{Z^{\Proportion}_{\Lambda, \Conf}(1)}
\sum_{\tConf_{C_1}|\Conf_{C_1}} 
\dots \sum_{\tConf_{C_n}|\Conf_{C_n}}
\prod_{i=1}^{n} \1_\A(\tConf_{C_i},E)
\prod_{x \in \Conf_{C_i}}\alpha_{\sigma_{x}}
\\ &= 
\frac{\alpha_1^{|\InfiniteCluster|}}{Z^{\Proportion}_{\Lambda, \Conf}(1)}
\prod_{ 
\substack{
\InfiniteCluster \not = C \subseteq \Conf
\\ \text{cluster of } (\Conf,E) }} 
\left( \sum_{i=1}^{\Color}\alpha_i^{|C|} \right),
\end{align*}
which implies the wanted result.
Here the sum 
$\underset{\tConf|\Conf}{\sum}$
 is over all coloured configurations
$\tConf$ whose projection onto $\ConfSpace$ is $\Conf$.
\end{proof}
So from both propositions, the colour of one particle in the Potts model is directly related to the connectivity of this point in the generalized Continuum Random Cluster model.
In particular the points connected "at infinity" (i.e. those in $\InfiniteCluster$) have fixed deterministic colour $1$.

For a configuration $\tConf=(\Conf,\sigma) \in \tConfSpace$, and for $\Delta\subseteq \Lambda \subseteq \R^\Dim$, we write $N_{\Delta,1}(\tConf)$ for the number of points of colour 1 inside $\Delta$.
We also write $N_{\Delta \leftrightarrow \infty}(\Conf,E)$ for the number of points in 
$\InfiniteCluster\cap{\Delta}$.
\begin{proposition} \label{propo_proportion_couleur_potts}
Assume that $\ProportionMax>1$ and that $i\not=1$ is one of the other colours with maximal proportion.
Then
\begin{align*}
\int \left(
N_{\Delta,1} - N_{\Delta,i}
\right) \
d \tSpecification^{\Intensity,\Proportion}_{\Lambda,1}
\ = \
\int
N_{\Delta \leftrightarrow \infty} \
d \RCM .
\end{align*}
\end{proposition}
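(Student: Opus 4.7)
The plan is to use the joint coupling $\FKA$ from the previous two propositions, writing both sides of the equality as integrals against $\FKA$ and computing the inner conditional expectation explicitly.

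First, by Proposition \ref{propo_projection_potts}, the left-hand side equals
\begin{align*}
\int_{\tConfSpace\times\EdgeSet} \bigl( N_{\Delta,1}(\tConf) - N_{\Delta,i}(\tConf) \bigr)\, d\FKA(\tConf,E),
\end{align*}
and by Proposition \ref{proposition_projection_gcrcm}, the right-hand side equals
\begin{align*}
\int_{\tConfSpace\times\EdgeSet} N_{\Delta\leftrightarrow\infty}(\Conf,E)\, d\FKA(\tConf,E).
\end{align*}
So it suffices to prove the identity at the level of conditional expectations given $(\Conf,E)$ under $\FKA$.

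Next I would describe the conditional law of $\sigma$ given $(\Conf,E)$ under $\FKA$. Inspection of the formulas defining $\FK$ and the event $\A$ shows that, conditionally on $(\Conf,E)$, the colours are constant on each connected component of $(\Conf,E)$; the infinite cluster $\InfiniteCluster$ has colour $1$ deterministically because of the indicator $\1_{A^1_{r_4}}$; and the finite clusters $C$ are coloured independently, each $C$ taking colour $j$ with probability $\alpha_j^{|C|}/\sum_{k=1}^{\Color}\alpha_k^{|C|}$ (this is just the product $\prod_{x\in C}\alpha_{\sigma_x}$ restricted to constant colourings and renormalised, which is exactly what reappeared in the computation of the projection in Proposition \ref{proposition_projection_gcrcm}).

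Decomposing $N_{\Delta,j}(\tConf) = |\InfiniteCluster\cap\Delta|\,\1_{j=1} + \sum_{C\text{ finite}} |C\cap\Delta|\,\1_{\sigma_C=j}$ and taking the conditional expectation, the contribution of each finite cluster $C$ to $\mathbb{E}[N_{\Delta,1}-N_{\Delta,i}\mid \Conf,E]$ is
\begin{align*}
|C\cap\Delta|\cdot\frac{\alpha_1^{|C|}-\alpha_i^{|C|}}{\sum_{k=1}^{\Color}\alpha_k^{|C|}} = 0,
\end{align*}
because $\alpha_1=\alpha_i$ by the assumption that both colours are maximal in $\Proportion$. The only surviving term is the deterministic contribution of the infinite cluster, which is $|\InfiniteCluster\cap\Delta|=N_{\Delta\leftrightarrow\infty}(\Conf,E)$. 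Integrating this identity against the projection of $\FKA$ on $\ConfSpace\times\EdgeSet$, which is $\RCM$ by Proposition \ref{proposition_projection_gcrcm}, yields the claim.

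The only subtlety I foresee is keeping the bookkeeping of the conditional distribution clean: one must check carefully that, after conditioning on $(\Conf,E)$ and on $\A$, the colours of distinct finite clusters really are independent with the advertised marginals, and that the boundary indicator $\1_{A^1_{r_4}}$ is automatically enforced by the fact that all boundary points are linked to the imaginary point at infinity (and hence lie in $\InfiniteCluster$). Once this is verified, the equality $\alpha_1=\alpha_i$ does all the work and the proof is essentially a one-line computation.
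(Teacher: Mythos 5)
Your proof is correct and follows essentially the same route as the paper. You condition on $(\Conf,E)$ under $\FKA$ and observe that the conditional coloring gives each finite cluster $C$ color $j$ with probability $\alpha_j^{|C|}/\sum_k\alpha_k^{|C|}$, so that $\alpha_1=\alpha_i$ kills every finite-cluster contribution to $\mathbb{E}[N_{\Delta,1}-N_{\Delta,i}\mid\Conf,E]$, leaving only $|\InfiniteCluster\cap\Delta|$; the paper does the identical cancellation point-by-point (summing $\1_{\sigma_x=1}-\1_{\sigma_x=i}$ over $x\in\Conf_\Delta$) rather than cluster-by-cluster, but the two decompositions are trivially equivalent. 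One small imprecision in phrasing: the boundary indicator $\1_{A^1_{r_4}}$ is not ``automatically enforced'' by the connectivity rule -- it is an explicit factor in $\lambda^{\Proportion,1}_{\Conf,\Lambda}$ -- but combined with $\A$ it does deterministically color $\InfiniteCluster$ with $1$, which is what you use, so the substance is fine.
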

\begin{proof}
From Proposition \ref{propo_projection_potts} the left hand side is
\begin{align*}
\int  
(N_{\Delta,1} - & N_{\Delta,i})
d \tSpecification^{\Intensity,\Proportion}_{\Lambda,1}
\\ & =
\int_{\ConfSpace} \int_{\EdgeSet}  
\int_{\tConfSpace}\frac{\1_{\A}(\tConf, E)}
{\FK(\A) }
 \left(
N_{\Delta,1}(\tConf) - N_{\Delta,i}(\tConf)
\right)
\lambda^{\Proportion, 1}_{\Conf,\Lambda}(\tConf)
\mu^{\phi}_{\Conf}(E) 
P_{\Lambda}^{\Intensity,\psi}(d\Conf)
\\ &=
\int_{\ConfSpace} \int_{\EdgeSet} 
\sum_{x \in \Conf_{\Delta}} 
\1_{x \in \InfiniteCluster}  \left[ \int_{\tConfSpace}
\frac{\1_{\A}(\tConf,E) }{ \mathbb{P}(\A)}
\lambda^{\Proportion, 1}_{\Conf,\Lambda}(\tConf)
 \right]
\mu^{\phi}_{\Conf}(E) 
P_{\Lambda}^{\Intensity,\psi}(d\Conf). 
\end{align*}
The integrated quantity does no longer depend on the colouring of the configuration, and from Proposition \ref{proposition_projection_gcrcm} we get the result.
\end{proof}
Intuitively if the quantity
$\int ( N_{\Delta,1} - N_{\Delta,i} )
d \tSpecification^{\Intensity,\Proportion}_{\Lambda,1}$
is bounded from below uniformly in $\Lambda$ by some $\epsilon>0$, then when $\Lambda$ goes to $\R^\Dim$ the limit of 
$ \tSpecification^{\Intensity,\Proportion}_{\Lambda,1}$ will be a Potts measure with more particles of colour 1 than any other colour.
By repeating the same with a boundary condition  of different colour, we get the existence of several different Potts measures.
From Proposition \ref{propo_proportion_couleur_potts}, to control the quantity $\int ( N_{\Delta,1} - N_{\Delta,i} )
d \tSpecification^{\Intensity,\Proportion}_{\Lambda,1}$ we need to study the connectivity in the generalized Continuum Random Cluster model
$\RCM$.
The following proposition is the key tool in proving Theorem \ref{theo_phase_transition_Potts}.
\begin{proposition}\label{propo_perco_gcrcm}
Assume that assumptions ($A1$) to ($A5$) are satisfied, and 
$\Intensity$ is large enough (depending on the parameters, but not on $\Proportion$).
Then there exists $\epsilon>0$
such that
\begin{align*}
\int N_{\Delta \leftrightarrow \infty}(\Conf,E) 
\RCM (d\Conf,dE)
\geq \epsilon
\end{align*}
for every cell $\Delta=\Delta_j$ defined after equation \eqref{eq_tempered_conf} and every $\Lambda$ finite union of cells $\Delta_{j'}$.
\end{proposition}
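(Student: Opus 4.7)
The strategy is to adapt the coarse-graining Peierls argument of Georgii and H\"aggstr\"om \cite{georgii_haggstrom} to the non-symmetric setting. The new ingredient is a uniform comparison that neutralizes the cluster-weight factor $\prod_{C \neq \InfiniteCluster} \sum_{i=1}^{\Color} \alpha_i^{|C|}$, which in the symmetric case reduces to a simple function of the cluster count. By (A5) I would first choose $\delta>0$ such that any two points lying in neighbouring cells (cells $\Delta_j,\Delta_{j'}$ with $|j-j'|\leq 1$) are at distance at most $r_3$, while points in non-neighbouring cells are at distance $>r_2$ so that the $\psi$-interaction between them is non-positive by (A4). A cell $\Delta_j$ is then called \emph{good} when it contains at least one point of $\Conf$ and every pair of points in $\Delta_j$ or its neighbours is joined by an edge of $E$. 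By (A1) each such pair has open-edge probability at least $1-e^{-u}$, and two neighbouring good cells lie in the same cluster of $(\Conf,E)$, so $\Delta \leftrightarrow \InfiniteCluster$ as soon as $\Delta$ is on a good-cell chain reaching the boundary of $\Lambda$.

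To handle the cluster weight I would use $\alpha_1 = \max_i \alpha_i$ to obtain, for every cluster $C$,
\begin{equation*}
\alpha_1^{|C|} \;\leq\; \sum_{i=1}^{\Color} \alpha_i^{|C|} \;\leq\; \Color\,\alpha_1^{|C|}.
\end{equation*}
Combined with $\sum_{C} |C| = N_\Lambda(\Conf)$ and the trivial bound $\#\{C\} \leq N_\Lambda(\Conf)$, the density of $\RCM$ with respect to $\mu^\phi_\Conf(E)\,P^{\Intensity,\psi}_\Lambda(d\Conf)$ is pinched between two expressions of the form $\Color^{c N_\Lambda(\Conf)} \alpha_1^{N_\Lambda(\Conf)}/Z^g_\Lambda(\Proportion)$. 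Since $\alpha_1 \geq 1/\Color$, these bounds are uniform in $\Proportion$ and the Peierls analysis reduces to an effective Continuum Random Cluster model with intensity of order $\Intensity \alpha_1 \geq \Intensity/\Color$, at which point the symmetric estimates of \cite{georgii_haggstrom} become directly comparable.

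I would then run the Peierls contour argument: for every $*$-connected contour $\Gamma$ of bad cells separating $\Delta$ from the boundary of $\Lambda$, a local surgery replacing the configuration in each cell of $\Gamma$ by a single centred point produces a weight-ratio bound of the form $(C/\Intensity)^{|\Gamma|}$. Here the Poisson density of $P^{\Intensity,\psi}_\Lambda$ gains an $\Intensity$-factor per added point; the $H^\phi+H^\psi$ cost is bounded per cell using (A2)-(A4), in particular the superstability and lower regularity of $\psi$; and the cluster-weight ratio is absorbed into a factor $\Color^{O(|\Gamma|)}$ by the previous step. A standard exponential contour count $|\{\Gamma \ni j,\ |\Gamma|=n\}| \leq c_d^n$ then gives $\RCM(\Delta \text{ is surrounded by a bad contour}) < 1$ for $\Intensity$ large, uniformly in $\Lambda$ and $\Proportion$, which yields the claimed $\epsilon>0$. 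The main obstacle is the surgery step: one must verify that the cluster-weight ratio never accumulates faster than geometrically in $|\Gamma|$ and that all constants remain independent of $\Proportion$, which is precisely where the uniform comparison of the previous step is indispensable.
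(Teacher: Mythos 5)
Your proposal replaces the paper's actual argument --- stochastic domination via Holley's inequality, then comparison to an independent mixed site-bond Bernoulli percolation on $\Z^\Dim$ --- by a Peierls contour argument. Georgii and H\"aggstr\"om do not run a Peierls argument here either; they compare to mixed percolation exactly as in the present paper. Your idea of controlling the cluster weight uniformly in $\Proportion$ via $\alpha_1^{|C|}\leq\sum_i\alpha_i^{|C|}\leq\Color\,\alpha_1^{|C|}$ and $\alpha_1\geq 1/\Color$ is the right instinct and is closely related to the bound $\frac{\sum_i\alpha_i^{|C_x|}\sum_j\alpha_j^{|C_y|}}{\sum_i\alpha_i^{|C_x|+|C_y|}}\leq\Color^2$ that the paper uses to apply Holley and obtain the independent edge probability $\bar p=\frac{1-e^{-u}}{\Color^2e^{-u}+1-e^{-u}}$. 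But the two formulations are not interchangeable: your "pinching" gives $\alpha_1^{N}\leq$ numerator $\leq\Color^{\#C}\alpha_1^{N}$ where $\#C$ is the number of clusters, which is a functional of \emph{both} $\Conf$ and $E$; this does not reduce the problem to a CRCM of effective intensity $\Intensity\alpha_1$, because the cluster count cannot be traded for a fixed power of $N_\Lambda(\Conf)$ without losing the percolative structure.

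Two concrete gaps. First, your definition of a good cell --- "every pair of points in $\Delta_j$ or its neighbours is joined by an edge" --- is a complete-graph event, and under (A1) each eligible edge is present with probability $1-e^{-\phi(x-y)}$, which need only be $\geq 1-e^{-u}<1$. As $\Intensity\to\infty$ the number of points per cell grows, so the complete-graph probability decays super-exponentially; good cells would become \emph{rare}, which defeats the argument. The paper requires only that at least $n^*$ points form a connected graph, and uses the Erd\H{o}s--R\'enyi connectivity estimate (Lemma \ref{lemme_connectivity_ER}) to keep this probability bounded below, which is where the threshold $n^*$ in (A5) comes from. Second, the surgery step is the crux and is not set up: you acknowledge that the cluster-weight ratio under surgery must stay geometric in $|\Gamma|$, but the surgery you describe (collapsing each bad cell to a single point) can merge or split clusters that extend arbitrarily far from $\Gamma$, and you give no mechanism to localise the resulting change in $\prod_C\sum_i\alpha_i^{|C|}$. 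The paper's replacement for this is the Papangelou-intensity bound $h_\Lambda(\Conf\cup x)\geq\ConstantBis\,h_\Lambda(\Conf)$ of Lemma \ref{lemme_domination_papangelou}, which is proved by a one-point-at-a-time computation (insert a single point, bound the number of clusters it can attach to via the finite range (A2), and use the stochastic domination $\mu^\Proportion_{\Conf,\Lambda}\succeq\bar\mu_\Conf$ plus the independent-edge connectivity estimate). This one-point bound is what makes the density comparison uniform in $\Proportion$ and in the boundary configuration, and without an analogue of it your Peierls constants cannot be shown to be independent of $\Proportion$ or of the configuration outside $\Gamma$.
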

\subsection{Proof of Proposition \ref{propo_perco_gcrcm}}
The general idea is to use stochastic domination to compare our model to a mixed site-bond Bernoulli percolation model.
First we are decoupling the edges $E$ and constructing a probability measure 
$\bar{\mathcal{C}}^{\Intensity,\Proportion}_{\Lambda}$ the following way:
\begin{itemize}
\item
The distribution of particle positions is given by
\begin{align*}
M^{\Intensity, \Proportion}_{\Lambda}
=
\RCM
(. \times \EdgeSet).
\end{align*}
\item Given the points $\Conf$, we draw between two points $x,y \in \Conf$ such that $|x-y| \leq r_3$ an edge with probability
\begin{align}\label{eq_definition_independent_edge}
\bar{p}= \frac{1-e^{-u}}{q^2e^{-u}+1-e^{-u}}
\end{align}
where $r_3$ and $u$ come from assumption (A1).
The equation \eqref{eq_definition_independent_edge} defines on 
$\EdgeSet_{\Conf}$ the edges distribution $\bar{\mu}_{\Conf}$.
\end{itemize}
We therefore define the measure
$\bar{\mathcal{C}}^{\Intensity,\Proportion}_{\Lambda}(d\Conf,dE) 
= \bar{\mu}_{\Conf}(dE) 
M^{\Intensity, \Proportion}_{\Lambda}(d\Conf)$.
\begin{remark}
First remark that we have
$ \RCM(d\Conf,dE) = 
\mu^{\Proportion}_{\Conf,\Lambda}(dE) 
M^{\Intensity, \Proportion}_{\Lambda}(d\Conf)$ with
\begin{align*}
\mu^{\Proportion}_{\Conf,\Lambda}(dE)
\sim
\alpha_1^{|\InfiniteCluster|}
\prod_{ 
\substack{
\InfiniteCluster \not = C \subseteq \Conf_{\Lambda}
\\ \text{cluster of } (\Conf,E) }} 
\left( \sum_{i=1..\Color}\alpha_i^{|C|} \right)
\mu^{\phi}_{\Conf}(E)
\end{align*}
being the "discrete" generalized Random Cluster model.
The definition of $\mu^{\Proportion}_{\Conf,\Lambda}$ depends on $\Lambda$ only through the definition of the infinite connected component $\InfiniteCluster$.


Finally the choice $\Color^2$ in \eqref{eq_definition_independent_edge} is not optimal, but is uniform with respect to $\Proportion$.
In \cite{georgii_haggstrom}, the value $\Color$ was enough for the symmetric case.
\end{remark}
\begin{definition}
For two probability measures $\mu,\mu'$ on $\EdgeSet$, we say that $\mu'$ dominates $\mu$, written $\mu' \succeq \mu$, if $\int f d \mu' \geq \int f d \mu$ for all measurable increasing function (with respect to the natural order on $\EdgeSet$).

This notion of domination naturally extends to probability measures in 
$\ConfSpace \times \EdgeSet$.
\end{definition}
\begin{lemma}\label{lemme_domination_RCM}
Assume that assumption ($A1$) is satisfied.
Then for all $\Proportion$ and $\Conf$ we have 
$\mu^{\Proportion}_{\Conf,\Lambda} \succeq \bar{\mu}_{\Conf}$ and therefore 
$\RCM 
\succeq \bar{\mathcal{C}}^{\Intensity, \Proportion}_{\Lambda}$.
\end{lemma}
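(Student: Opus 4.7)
The plan is to reduce the lemma to a single-edge conditional probability comparison (Holley's inequality), since $\bar{\mu}_{\Conf}$ is a product measure. Concretely, it suffices to show that for every family of other edges $\eta \subseteq \EdgeSet_{\Conf}\setminus\{e\}$ and every candidate edge $e=\{x,y\}$ with $|x-y|\leq r_3$, the conditional edge probability satisfies
\begin{align*}
\mu^{\Proportion}_{\Conf,\Lambda}\bigl(e\in E \,\big|\, E\setminus\{e\}=\eta\bigr) \;\geq\; \bar{p},
\end{align*}
and that this inequality is trivial when $|x-y|>r_3$ (where $\bar{\mu}_{\Conf}$ puts no edge). Once the one-edge conditionals dominate those of the Bernoulli product measure, the stochastic ordering $\mu^{\Proportion}_{\Conf,\Lambda} \succeq \bar{\mu}_{\Conf}$ follows from Holley's theorem, and then the joint ordering $\RCM \succeq \bar{\mathcal{C}}^{\Intensity,\Proportion}_{\Lambda}$ is immediate, since both sides share the same first marginal $M^{\Intensity,\Proportion}_{\Lambda}$.

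For the key conditional bound I would compute the odds ratio of $e$ being present versus absent. Writing $p_e=1-e^{-\phi(x-y)}$ and fixing $\eta$, the edge weight $\mu^{\phi}_{\Conf}$ contributes the factor $p_e/(1-p_e)$, and the cluster-weight prefactor contributes a factor $R$ depending on whether $e$ merges two components. I would treat three cases: (i) $x$ and $y$ are already in the same connected component of $(\Conf,\eta)$, so $R=1$ and the conditional probability equals $p_e\geq 1-e^{-u}$; (ii) they are in two distinct finite clusters $C_x,C_y$, in which case
\begin{align*}
R = \frac{\sum_{i=1}^{\Color}\alpha_i^{|C_x|+|C_y|}}{\bigl(\sum_{i}\alpha_i^{|C_x|}\bigr)\bigl(\sum_{j}\alpha_j^{|C_y|}\bigr)};
\end{align*}
(iii) one of them, say $C_x$, is $\InfiniteCluster$ and $C_y$ finite, giving $R=\alpha_1^{|C_y|}/\sum_i\alpha_i^{|C_y|}$.

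The core estimate will be $R\geq 1/\Color^2$ in all three cases. In case (ii) this comes from the elementary bound $\sum_{i,j}\alpha_i^{|C_x|}\alpha_j^{|C_y|}\leq \Color^2\,\alpha_{\max}^{|C_x|+|C_y|}\leq \Color^2\sum_i\alpha_i^{|C_x|+|C_y|}$. In case (iii), crucially using the normalization $\alpha_1\geq\alpha_i$ for all $i$ that the paper fixed, one gets $\alpha_1^{|C_y|}\geq \frac{1}{\Color}\sum_i\alpha_i^{|C_y|}$, hence $R\geq 1/\Color\geq 1/\Color^2$. Combining $R\geq 1/\Color^2$ with $p_e\geq 1-e^{-u}$ from (A1), the conditional probability becomes
\begin{align*}
\frac{p_e R}{p_e R + (1-p_e)} \;\geq\; \frac{(1-e^{-u})/\Color^2}{(1-e^{-u})/\Color^2+e^{-u}} \;=\; \frac{1-e^{-u}}{\Color^2 e^{-u}+1-e^{-u}} \;=\; \bar{p},
\end{align*}
as desired; monotonicity in $p_e$ of this expression handles the values $p_e>1-e^{-u}$.

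The main obstacle, and the one place where the argument genuinely uses the non-symmetric structure, is case (iii): the ratio $\alpha_1^{|C_y|}/\sum_i\alpha_i^{|C_y|}$ can decay exponentially in $|C_y|$ if $\alpha_1$ is not maximal, which is why the convention $\alpha_1=\max_i\alpha_i$ is essential here. This is also precisely why the bound $\Color^2$ (rather than $\Color$ as in the symmetric case of \cite{georgii_haggstrom}) appears in the definition of $\bar{p}$: the factor $\Color$ is needed to absorb the worst case in (ii), and a second factor $\Color$ gives a margin that is uniform over all proportions $\Proportion$ satisfying $\alpha_1=\max_i\alpha_i$.
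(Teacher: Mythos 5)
Your proposal is correct and follows essentially the same route as the paper: the reduction to single-edge conditional probabilities via Holley's inequality, the same three-case analysis according to the connectivity of $x$ and $y$ in $(\Conf,E_{e^c})$, and the same bounds $R\geq 1/\Color^2$ (cases (i)--(ii)) and $R\geq 1/\Color$ (case (iii), using $\alpha_1=\max_i\alpha_i$) combined with the $(A1)$ bound $1-p_e\leq e^{-u}$. The paper only writes out case (ii) explicitly and leaves (iii) to the reader, but your explicit treatment of (iii) is exactly the intended argument.
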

This lemma is one the principal improvement with respect to the work of Georgii and Häggström \cite{georgii_haggstrom}.
\begin{proof}
The second assertion is a direct consequence of the first one.
For the first assertion we will use the well-known \emph{Holley inequality}, see for instance \cite[Th. 2.3]{grimmett_livre_rcm}.
Let $e=\{x,y\}$ with $x,y \in \Conf$, we have
\begin{align*}
\bar{\mu}_{\Conf}(e \in E | E_{e^c})
=
\bar{\mu}_{\Conf}(e \in E)
=
\begin{cases}
\tilde{p}=\frac{1-e^{-u}}{q^2e^{-u}+1-e^{-u}}
 &\text{ if } |x-y| \leq r_3 \\
0 &\text{ if }|x-y|>r_3
\end{cases}
\end{align*}
with $\bar{\mu}_{\Conf}(e \in E | E_{e^c})$ being the probability that $e$ is an edge of $E$ conditioned on knowing all the other edges.
From easy computations we also get
\begin{align*}
\mu^{\Proportion}_{\Conf,\Lambda}(e \in E | E_{e^c})
=
\begin{cases}
1-e^{-\phi(x-y)} &\text{ if } x\leftrightarrow y \text{ in } (\Conf, E_{e^c})
\\
\left( {1 + \frac{e^{-\phi(x-y)}}{1-e^{\phi(x-y)}}
\sum_{i}  
\left( \frac{\alpha_i}{\alpha_1}\right)^{|C_y|} }\right)^{-1}
& \text{ if } x \not \leftrightarrow y \text{ and } 
x  \leftrightarrow \infty
\\
\left({1 + \frac{e^{-\phi(x-y)}}{1-e^{-\phi(x-y)}}
\frac{\sum_{i}  \alpha_i^{|C_x|} 
\sum_{j}  \alpha_j^{|C_y|} }{\sum_{i}  \alpha_i^{|C_x|+|C_y|}}}
\right)^{-1}
& \text{ if } x \not \leftrightarrow y \text{ and } 
x ,y \not \leftrightarrow \infty
\end{cases}
\end{align*}
where $\leftrightarrow$ denotes the connectivity 
in $(\Conf, E_{e^c})$ and $C_x,C_y$ are 
the connected component of $x,y$ in $(\Conf, E_{e^c})$.
Remember that the connectivity of two points can be through the imaginary point at infinity.

To apply Holley's inequality, we have to check that 
$\bar{\mu}_{\Conf}(e \in E | E_{e^c})
\leq 
\mu^{\Proportion}_{\Conf,\Lambda}(e \in E | E_{e^c})$.
We will only do it for the last expression of 
$\mu^{\Proportion}_{\Conf,\Lambda}(e \in E | E_{e^c})$.

This inequality is trivially true when $|x-y|>r_3$.
Otherwise
from assumption (A1) we have 
$\frac{e^{-\phi(x-y)}}{1-e^{-\phi(x-y)}}
\leq \frac{e^{-u}}{1-e^{-u}}$.
Furthermore
\begin{align*}
\frac{\sum_{i}  \alpha_i^{|C_x|} 
\sum_{j}  \alpha_j^{|C_y|} }{\sum_{i}  \alpha_i^{|C_x|+|C_y|}}
&=
\frac{\sum_{i}  (\alpha_i/\alpha_1)^{|C_x|} 
\sum_{j}  (\alpha_j/\alpha_1)^{|C_y|} }
{\sum_{i}  (\alpha_i/\alpha_1)^{|C_x|+|C_y|}}
\\ & \leq
\left( \frac{1}{\alpha_1}\right)^2 \frac{1}{\#^{\Proportion}_{max}}
\leq \Color^2,
\end{align*}
which implies the wanted inequality.
The other cases can be treated the same.
\end{proof}
From Lemma \ref{lemme_domination_RCM} it is enough to prove Proposition \ref{propo_perco_gcrcm} for the measure $\bar{\mathcal{C}}^{\Intensity, \Proportion}_{\Lambda}$.
This will be done by a  discretization and a comparison to the random connection model.
For this remember the definition of the cells $\Delta_j$ done just before the beginning of Section \ref{section_PPP}.
\begin{definition} \label{defi_good_cells}
Starting now we take $\delta=\frac{r_3}{\sqrt{d+3}}$, to ensure that any two points in two adjacent cells $\Delta_j,\Delta_{j'}$ are at distance at most $r_3$.
\begin{itemize}
\item We call a cell \emph{good} if it contains at least $n^*$ points forming (with the edges) a connected graph.
\item Two cells are said \emph{linked} if there exists an edge connecting two points in the two cells.
\end{itemize}
\end{definition}
This defines a correlated site-bond percolation on $\Z^\Dim$.
The next lemma states the usual percolation result for the independent site-bond percolation model.
\begin{lemma}\label{lemme_percolation_bernoulli}
Consider on $\Z^\Dim$ the Bernoulli site-bond percolation model where each site and each edge between sites at distance 1 is open with probability $p$ and closed otherwise, independently of everything else.
Let us write $Prob_p$ the probability measure associated to this model.

There exists $p_c=p_c(\Dim) \in ]0,1[$ such that for $p>p_c$,
\begin{align*}
\theta (p) 
=
Prob_p(\text{the origin is connected to infinity})
>0.
\end{align*}
\end{lemma}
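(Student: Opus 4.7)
The plan is to derive both bounds $0 < p_c(\Dim) < 1$ via classical Bernoulli percolation techniques; this result is standard, so I would give a brief self-contained argument sketched along two fronts, after first establishing monotonicity.

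\textbf{Monotonicity.} First I would observe that $\theta$ is non-decreasing in $p$ by the usual coupling: attach to every site and every nearest-neighbour edge of $\Z^\Dim$ an independent uniform random variable in $[0,1]$ and declare it open at level $p$ iff this variable is at most $p$. Under this coupling the set of open sites/edges is monotone in $p$, so the event $\{0 \leftrightarrow \infty\}$ is monotone and $\theta(p)$ is non-decreasing. Hence $p_c := \sup\{p : \theta(p)=0\}$ is well defined and it suffices to exhibit $p^-, p^+ \in (0,1)$ with $\theta(p^-)=0$ and $\theta(p^+)>0$.

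\textbf{Lower bound $p_c > 0$.} For the trivial direction I would use the first-moment (branching) bound. If $0$ is connected to infinity then for every $n$ there is an open self-avoiding path of length $n$ starting at $0$. The number of self-avoiding paths of length $n$ from the origin is at most $(2\Dim)^n$ and any fixed such path is open iff its $n+1$ sites and $n$ edges are all open, which has probability $p^{2n+1}$. Consequently
\begin{align*}
\theta(p) \leq (2\Dim)^n\, p^{2n+1},
\end{align*}
which tends to $0$ whenever $2\Dim\, p^2<1$. Thus $\theta(p)=0$ for $p<(2\Dim)^{-1/2}$.

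\textbf{Upper bound $p_c < 1$.} Here I would use a Peierls-type contour argument. Condition on the origin being open (probability $p$) and consider the event that it lies in a finite open cluster. When this happens there exists a minimal separating surface of closed sites or edges enclosing $0$ in $\Z^\Dim$; a standard combinatorial fact (see e.g.\ Grimmett, \emph{Percolation}) is that the number of such minimal cutsets of size $n$ surrounding the origin is at most $n\, C(\Dim)^n$ for some dimension-dependent constant $C(\Dim)$, and each such cutset contains at least $n$ independent closed sites or edges, each closed with probability $1-p$. Therefore
\begin{align*}
\mathbb{P}\bigl(0 \text{ open but in a finite cluster}\bigr)
\leq \sum_{n\geq n_0(\Dim)} n\, C(\Dim)^n (1-p)^n,
\end{align*}
which can be made smaller than $p/2$ by choosing $p$ close enough to $1$, say $p > 1 - 1/(2C(\Dim))$. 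For such $p$ we obtain $\theta(p) \geq p - p/2 > 0$. Combining with monotonicity yields $p_c < 1$.

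\textbf{Main obstacle.} The only genuinely non-trivial ingredient is the combinatorial bound on the number of minimal closed cutsets surrounding the origin in dimension $\Dim \geq 2$; in $\Dim=2$ this reduces to the Peierls count of dual circuits, while in higher dimensions it relies on the classical enumeration of $\ast$-connected surfaces. Since this is standard material I would simply invoke it and refer the reader to the literature rather than reprove it, keeping the lemma's proof short.
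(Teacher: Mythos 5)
Your argument is correct, but it takes a genuinely different route from the paper's. The paper's proof is a two-line reduction to known threshold comparisons: for $p_c>0$ it observes that percolation in the mixed model at parameter $p$ implies percolation in pure \emph{site} Bernoulli percolation at the same $p$ (so $p_c \geq p_c^{\text{site}}>0$), and for $p_c<1$ it invokes the Hammersley--McDiarmid type comparison between site, bond and mixed site--bond thresholds to conclude $p_c \leq \sqrt{p_c^{\text{site}}}<1$. You instead give a self-contained argument: a first-moment bound over self-avoiding paths, giving the explicit estimate $p_c \geq (2\Dim)^{-1/2}$, and a Peierls contour count for the upper bound. Both are valid; the paper's version is shorter and offloads all the work to cited results, while yours is more elementary but must appeal to the enumeration of minimal blocking surfaces, which for $\Dim\geq 3$ is itself a nontrivial (though classical) combinatorial fact, as you correctly flag. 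Two small remarks on your Peierls step: in the mixed model a site $y$ on the external boundary can be blocked either because $y$ itself is closed or because every edge joining $y$ to the cluster is closed, so the "cutset" is a hybrid object of sites and edges and the enumeration constant must absorb the corresponding $2^{O(n)}$ choices (your $C(\Dim)^n$ does this, but it is worth saying explicitly). You could also sidestep the high-dimensional cutset enumeration entirely by restricting to the slab $\Z^2\times\{0\}^{\Dim-2}$, where the contour argument with dual circuits is elementary, since percolation in the slab implies percolation in $\Z^\Dim$.
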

The proof of this lemma is done in the appendix in Section \ref{section_appendix}.
In order to control the probability of a cell being good, we will use the following lemma.
\begin{lemma}\label{lemme_connectivity_ER}
For a positive integer $n$ and for $0<p<1$,
we consider the random graph $\mathcal{G}_{n,p}$
of $n$ vertices where each pair of vertices independently forms an edge with probability
$p$.
Then
\begin{align*}
\gamma(n,p)
=
Prob ( \mathcal{G}_{n,p} \text{ is connected} )
\underset{n \to \infty}{\longrightarrow} 1,
\end{align*}
and therefore
$\gamma(p)= \inf \{ \gamma(n,p)\ | \  n \geq 1\} >0$.
\end{lemma}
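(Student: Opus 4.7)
The plan is to establish the two assertions in sequence: first the asymptotic $\gamma(n,p)\to 1$, then the positivity of the infimum.

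For the asymptotic, I would use a standard cut/union bound argument. The event that $\mathcal{G}_{n,p}$ is disconnected is exactly the event that there exists a non-empty proper subset $S\subsetneq\{1,\dots,n\}$ such that no edge is drawn between $S$ and its complement. By symmetry I can restrict to $|S|=k$ with $1\le k\le \lfloor n/2\rfloor$. Since there are $k(n-k)$ potential edges between $S$ and $S^c$, each independently absent with probability $1-p$, a union bound gives
\begin{align*}
1-\gamma(n,p)
\;\le\;
\sum_{k=1}^{\lfloor n/2\rfloor}\binom{n}{k}(1-p)^{k(n-k)}
\;\le\;
\sum_{k=1}^{\lfloor n/2\rfloor}\bigl(n(1-p)^{n/2}\bigr)^{k},
\end{align*}
using $\binom{n}{k}\le n^{k}$ and $n-k\ge n/2$. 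Since $p\in(0,1)$ the quantity $n(1-p)^{n/2}$ tends to $0$ as $n\to\infty$, so the geometric-type sum on the right tends to $0$, yielding $\gamma(n,p)\to 1$.

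For the second assertion I only have to check that each individual $\gamma(n,p)$ is strictly positive. For $n=1$ the graph consists of a single isolated vertex and is connected, so $\gamma(1,p)=1$. For $n\ge 2$, the event that every one of the $\binom{n}{2}$ potential edges is drawn has probability $p^{\binom{n}{2}}>0$, and on this event $\mathcal{G}_{n,p}$ is the complete graph $K_n$, which is connected; therefore $\gamma(n,p)\ge p^{\binom{n}{2}}>0$. Combining this with the first part: pick $N$ such that $\gamma(n,p)\ge 1/2$ for all $n\ge N$ (possible by the first step); then
\begin{align*}
\gamma(p)
\;=\;
\min\!\Bigl(\min_{1\le n<N}\gamma(n,p),\ \inf_{n\ge N}\gamma(n,p)\Bigr)
\;\ge\;
\min\!\Bigl(\min_{1\le n<N}\gamma(n,p),\ 1/2\Bigr)
\;>\;0,
\end{align*}
since the minimum over the finite range $1\le n<N$ of strictly positive numbers is strictly positive.

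There is no real obstacle here: the result is classical and both steps are elementary. The only thing to be mildly careful with is the bound $\binom{n}{k}(1-p)^{k(n-k)}\le (n(1-p)^{n/2})^k$, which relies on restricting to $k\le n/2$ so that $n-k\ge n/2$; this is exactly what is gained from the symmetry $S\leftrightarrow S^c$.
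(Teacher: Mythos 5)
Your proof is correct, but it takes a different route from the paper. You use the classical Erd\H{o}s--R\'enyi cut argument: bound the probability of disconnection by a union bound over all proper subsets $S$ with $1\le|S|\le\lfloor n/2\rfloor$ having no edge to $S^c$, then crush the resulting sum with $\binom{n}{k}\le n^k$ and $n-k\ge n/2$. The paper instead uses a cleaner one-vertex argument: it considers the event that the fixed vertex $x_1$ is joined to every other vertex $x_j$ by some path of length exactly two through an intermediate $x_k$; for fixed $j$ the $n-2$ candidate two-step paths are edge-disjoint, hence independent, giving failure probability $(1-p^2)^{n-2}$, and a union bound over $j$ yields $\gamma(n,p)\ge 1-(n-1)(1-p^2)^{n-2}\to 1$. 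Both arguments are elementary and correct; the paper's gives a single closed-form lower bound in one line and avoids summing over cuts, while yours is the more standard textbook proof and produces the sharper asymptotic rate. Your explicit handling of the infimum $\gamma(p)>0$ (noting $\gamma(n,p)\ge p^{\binom{n}{2}}>0$ for each fixed $n$ and combining with the limit) is spelled out more carefully than in the paper, which simply asserts the conclusion.
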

The proof is done in the appendix in Section \ref{section_appendix}.
Let us now introduce the function
\begin{align*}
h_{\Lambda}(\Conf)
= \sum_{E \in \EdgeSet_{\Conf}}\frac{\alpha_1^{|\InfiniteCluster|}
}{Z^{g}_{\Lambda}(\Proportion)}
\prod_{ 
\substack{
\InfiniteCluster \not = C \subseteq \Conf
\\ \text{cluster of } (\Conf,E) }} 
\left( \sum_{i=1..\Color}\alpha_i^{|C|} \right)
{\mu}^{\phi}_{\Conf}(E).
\end{align*}
The function $h_\Lambda$ is the probability density of 
$M^{\Intensity, \Proportion}_{\Lambda}$ with respect to 
$P_{\Lambda}^{\Intensity,\psi}$.
For fixed $\Conf$, $h_{\Lambda}(\Conf)$ is also the partition function of the discrete generalized Random Cluster model $\mu^{\Proportion}_{\Conf,\Lambda}$.
\begin{lemma}\label{lemme_domination_papangelou}
Under assumption (A2), there exists a constant $ \ConstantBis>0$ such that for every 
$\Conf \in \ConfSpace_\Lambda$ and every $x \in \Lambda$,
\begin{align*}
h_\Lambda(\Conf \cup x) \geq \ConstantBis \times h_\Lambda(\Conf).
\end{align*}
\end{lemma}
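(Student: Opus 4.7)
The plan is to obtain this Papangelou-type lower bound by expanding both $h_\Lambda(\Conf\cup x)$ and $h_\Lambda(\Conf)$ as sums over edge configurations and comparing contributions configuration-by-configuration. Writing $\tilde h_\Lambda := Z^{g}_{\Lambda}(\Proportion)\,h_\Lambda$ for the unnormalised sum, every $E'\in\EdgeSet_{\Conf\cup x}$ decomposes uniquely as $E' = M\cup F_x$, where $M\in\EdgeSet_\Conf$ and $F_x$ collects the edges incident to $x$. The product structure of the edge measure gives $\mu^\phi_{\Conf\cup x}(E') = \mu^\phi_\Conf(M)\,\nu_x(F_x\mid\Conf)$, with $\nu_x$ the independent Bernoulli measure on potential edges from $x$. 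Assumption (A2) implies $\nu_x$ forces $\{x,y\}\notin F_x$ whenever $|x-y|\ge r_4$, so $F_x$ is effectively supported on the finite set of edges from $x$ to $\Conf\cap B(x,r_4)$.

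Next, since $\sum_{F_x}\nu_x(F_x\mid\Conf)=1$, summing first over $F_x$ for fixed $M$ reduces the inequality to the pointwise bound
\begin{equation*}
\mathbb{E}_{F_x\sim\nu_x}\!\bigl[\mathcal{W}(\Conf\cup x,\,M\cup F_x)\bigr]\;\ge\;\ConstantBis\,\mathcal{W}(\Conf,M)
\end{equation*}
uniformly in $M$ and $\Conf$, where $\mathcal{W}$ denotes the cluster-weight product $\alpha_1^{|\InfiniteCluster|}\prod_C(\sum_i\alpha_i^{|C|})$. To establish this, I would decompose the expectation according to whether $x$ ends up isolated or joined to a set of already existing clusters of $(\Conf,M)$. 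The effect on $\mathcal{W}$ is governed by the elementary inequalities $\sum_i\alpha_i=1$ and $\sum_i\alpha_i^{a+b}\ge\alpha_{\min}^{\,b}\sum_i\alpha_i^a$, which control the cost of merging the clusters hit by $F_x$ into one big cluster containing $x$.

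The main obstacle is exactly this uniform control in $\Conf$. A single representative like $F_x=\emptyset$ would introduce the factor $\prod_{y\in\Conf\cap B(x,r_4)}e^{-\phi(x-y)}$, which degenerates when $\Conf$ is dense near $x$; picking instead the maximal $F_x$ produces factors $\prod(1-e^{-\phi(x-y)})$ with the same defect at points where $\phi$ is small. The resolution is to keep the full expectation over $F_x$ and exploit the normalisation $\sum_{F_x}\nu_x(F_x\mid\Conf)=1$: one splits the $y\in\Conf\cap B(x,r_4)$ into groups according to their membership in the clusters of $(\Conf,M)$, notices that the edges from $x$ to distinct such groups are $\nu_x$-independent, and bounds the resulting product of conditional expectations cluster-by-cluster using the ratio bound above. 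This produces a constant $\ConstantBis>0$ depending only on $\Proportion$ (through $\alpha_{\min}$) and on the finite range $r_4$ from (A2), but not on $\Conf$, $x$, or $\Lambda$.

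Finally, summing the resulting $M$-wise inequality against $\mu^\phi_\Conf(M)$ yields $\tilde h_\Lambda(\Conf\cup x)\ge \ConstantBis\,\tilde h_\Lambda(\Conf)$, and dividing by the common normalising constant $Z^{g}_\Lambda(\Proportion)$ gives the stated conclusion for $h_\Lambda$.
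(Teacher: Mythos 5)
Your setup is right up to the reduction: factoring $\mu^\phi_{\Conf\cup x}(E')=\mu^\phi_\Conf(M)\,\nu_x(F_x\mid\Conf)$, invoking (A2) to restrict the edges in $F_x$ to $\Conf\cap B(x,r_4)$, and identifying the need to control the merging of clusters are all in line with the paper. You also correctly flag the real obstacle: uniform control in $\Conf$ of the cluster-weight ratio. But the resolution you propose --- proving the \emph{pointwise} inequality
\[
\mathbb{E}_{F_x\sim\nu_x}\bigl[\mathcal{W}(\Conf\cup x, M\cup F_x)\bigr]\;\geq\;\ConstantBis\,\mathcal{W}(\Conf,M)
\]
\emph{uniformly in} $M$ \emph{and} $\Conf$ --- cannot work: it is false. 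Take $M=\emptyset$ and $\Conf$ consisting of $n$ points packed in $B(x,r_4)$ with $\phi(x-y)$ large, so that each $p_j\approx1$. Then $\mathcal{W}(\Conf,M)=(\sum_i\alpha_i)^n=1$, while the merged cluster of size $1+|T|$ gives $\mathbb{E}_{F_x}[\mathcal{W}]=\sum_i\alpha_i\prod_j(1-p_j+p_j\alpha_i)\to 0$ as $n\to\infty$ whenever $\alpha_1<1$. The ``cluster-by-cluster'' factorisation you invoke only yields factors of the form $1-p_j(1-1/\Color)\in[1/\Color,1]$, whose product over an unbounded number of nearby clusters is not bounded below. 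Assumption (A2) alone gives no control on the number of clusters touching $B(x,r_4)$, which is why the constant you describe (depending only on $r_4$ and $\alpha_{\min}$) cannot exist.

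The paper's proof closes precisely this gap, and it does so by abandoning the pointwise-in-$M$ strategy. It first lower-bounds the cluster-weight ratio by $\alpha_1/\Color^{\text{(number of clusters of }(\Conf,E)\text{ connected to }x)}$, and then restricts the sum over $E$ to a good event: partition $B(x,r_4)$ into $k=k(d,r_3,r_4)$ pieces of diameter less than $r_3$, and let $A_j$ be the event that the induced graph on $\Conf_{B_j}$ is connected. On $\bigcap_j A_j$ at most $k$ clusters touch $B(x,r_4)$, giving the uniform factor $\alpha_1/\Color^k$. The probability of this good event is then lower-bounded by $\gamma(\tilde p)^k>0$ using the stochastic domination $\mu^{\Proportion}_{\Conf,\Lambda}\succeq\bar\mu_\Conf$ from Lemma~\ref{lemme_domination_RCM} (which needs (A1), via $\tilde p$) and the Erd\H{o}s--R\'enyi connectivity bound of Lemma~\ref{lemme_connectivity_ER}. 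So the missing ingredient in your proposal is exactly this restriction to a high-probability edge-event, and it is where (A1) enters --- the constant in the lemma genuinely depends on $u$ and $r_3$, not just on $r_4$.
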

This lemma is one of the principal improvement of the initial work of Georgii and Häggström \cite{georgii_haggstrom}.
Furthermore it is the only part of the article where the finite range assumption (A2) on $\phi$ was used.
\begin{proof}
In the following, $E$ is an edge configuration between points in $\Conf$, i.e. $E \in \EdgeSet_{\Conf}$, and $E'$ is an edge configuration between $x$ and points of $\Conf$.
The union $E \cup E'$ is in $\EdgeSet_{\Conf \cup x}$. 
We will denote $C_j$, $j=1..n$ the connected components of $(\Conf,E)$, one of which can be infinite (if so it will be the first one $C_1$), which are connected together in $(\Conf \cup x, E \cup E')$.
Then we have
\begin{align*}
\frac{h_{\Lambda}(\Conf \cup x)}{h_{\Lambda}(\Conf)}
&= 
\sum_{E \in \EdgeSet_{\Conf}}
\mu^{\Proportion}_{\Conf,\Lambda}(E)
\sum_{E'}
\frac{
\alpha_1^{1+\underset{j=1..n}{\sum} |C_j|}
+ \1_{C_1\text{  finite}} 
\underset{{i=2.. \Color}}{\sum} \alpha_i^{1+\underset{j=1..n}{\sum} |C_j|}
}{\left( 
\alpha_1^{ |C_1|}
+ \1_{C_1\text{ finite}} 
\underset{{i=2.. \Color}}{\sum} \alpha_i^{ |C_1|}
\right)\underset{j=2..n}{\prod} \ \underset{i=1.. \Color}{\sum} \alpha_i^{|C_j|}
}
\mu^{\phi}_{\Conf \cup x}(E') 
\\ & \geq 
\sum_{E \in \EdgeSet_{\Conf}}
\mu^{\Proportion}_{\Conf,\Lambda}(E)
\sum_{E'} \frac{\alpha_1^{1+\underset{j=1..n}{\sum} |C_j|}}
{\underset{j=1..n}{\prod} \ \Color \alpha_1^{|C_j|}}
\ \mu^{\phi}_{\Conf \cup x}(E') 
\\ &= 
\sum_{E \in \EdgeSet_{\Conf}}
\mu^{\Proportion}_{\Conf,\Lambda}(E)
\sum_{E'}
\frac{\alpha_1 \ \mu^{\phi}_{\Conf \cup x}(E') }
{\Color^{\text{number of cc of } (\Conf,E) \text{ connected to } x}}.
\end{align*}
For a connected component to be connected to $x$ it must, from assumption (A2),  contain a point at distance less than $r_4$.
We split the closed ball $B(x,r_4)$ into a minimal number $k$ of disjoint sets $B_j, j=1..k$ of diameter less than $r_3$.
On each $B_j$, we consider the event $A_j$ that the graph $(\Conf_{B_j},E \cap \EdgeSet_{\Conf_{B_j})}$ is connected. 
The events $A_j$ are increasing and we have
\begin{align*}
\frac{h_{\Lambda}(\Conf \cup x)}{h_{\Lambda}(\Conf)}
& \geq
\frac{\alpha_1}{\Color^k}
\mu^{\Proportion}_{\Conf,\Lambda}
\left(
\bigcap_{j=1..k} A_j
\right)
\\ &\geq
\frac{\alpha_1}{\Color^k}
\bar{\mu}_{\Conf}
\left(
\bigcap_{j=1..k} A_j
\right)
\geq 
\frac{\alpha_1}{\Color^k}
\gamma ( \tilde{p})^k := \ConstantBis >0,
\end{align*}
where  the last line uses the stochastic domination of Lemma \ref{lemme_domination_RCM}, the independence of the events $A_j$ with respect to $\bar{\mu}_{\Conf}$, and where $\gamma ( \tilde{p})$ is defined in Lemma \ref{lemme_connectivity_ER}.
\end{proof}
We now have all the tools to compare 
$\bar{\mathcal{C}}^{\Intensity, \Proportion}_{\Lambda}$ to the independent site-bond Bernoulli percolation model.
Let us fix $p^*>p_c$, where $p_c$ is defined in Lemma \ref{lemme_percolation_bernoulli}.
Let us define $\lambda(n,\tilde{p})=1-(1-\tilde{p})^{n^2}$ as a lower bound for the
$\bar{\mu}_{\Conf}$-probability that there exists at least one edge between points in two neighbouring cells containing at least $n$ points each.
From Lemma \ref{lemme_connectivity_ER}, we have the existence of $n^*$ such that
\begin{align} \label{eq_borne_connectivity}
\gamma(n,\tilde{p}) \geq \sqrt{p^*} 
\quad \text{ and } \quad
\lambda(n,\tilde{p}) \geq p^* \quad \text{for all } n\geq n^*.
\end{align}
We are now in position to make clear the requirement on $r_1$ from ($A5$):
\begin{align}\label{eq_assumption_A5}
\text{(A5)} \hspace{0.5cm}
r_2 < r_3 /2\sqrt{d + 3}
\hspace{0.5cm} \text{and} \hspace{0.5cm}
(n^* -1) |B(0,r_1)| < (\delta -2r_2)^\Dim.
\end{align}
Let us define $M^{\Intensity, \Proportion}_{\Lambda,\Delta_j,\Conf}$ the conditional probability, according to $M^{\Intensity, \Proportion}_{\Lambda}$, 
of particles inside $\Delta_j$, knowing the the configuration in 
$\Lambda \setminus \Delta_j$ is $\Conf$.
\begin{lemma}\label{lemme_domination_nombre_points}
Assume that $r_1,r_2,r_3$ satisfy assumption ($A5$), 
and that assumptions (A2) and (A4) is satisfied.
Then for $\Intensity$ large enough we have
\begin{align*}
M^{\Intensity, \Proportion}_{\Lambda,\Delta_j,\Conf}
(N_{\Delta_j} \geq n^*)
\geq
\sqrt{p^*} 
\end{align*}
for all $\Lambda$ finite union of cells, for all $\Delta_j$ cells included in $\Lambda$, and for all configurations $\Conf$ on $\Lambda\setminus \Delta_j$.
\end{lemma}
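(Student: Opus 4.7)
The plan is to lower-bound $M^{\Intensity,\Proportion}_{\Lambda,\Delta_j,\Conf}(N_{\Delta_j}\geq n^*)$ by a local augmentation: given any configuration with fewer than $n^*$ points in a safely nested sub-box of $\Delta_j$, insert the missing points and compare the resulting conditional densities, using that the Poisson reference produces an extra factor $\Intensity^{n^*-k}$ per inserted point. Concretely, set
\[
\Delta_j':=j+\bigl(-(\delta/2-r_2),\delta/2-r_2\bigr)^\Dim.
\]
Every $y\in\Delta_j'$ is at sup-distance strictly greater than $r_2$ from $\R^\Dim\setminus\Delta_j$, so by (A4) we have $\psi(y-z)\leq 0$ whenever $z\notin\Delta_j$. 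By (A5), $|\Delta_j'|=(\delta-2r_2)^\Dim>(n^*-1)|B(0,r_1)|$, and the standard ball-packing argument iteratively produces $n^*$ points of $\Delta_j'$ at mutual distance at least $r_1$, starting from any pre-existing configuration of at most $n^*-1$ points in $\Delta_j'$. Since $\{N_{\Delta_j'}\geq n^*\}\subset\{N_{\Delta_j}\geq n^*\}$, it suffices to prove $M^{\Intensity,\Proportion}_{\Lambda,\Delta_j,\Conf}(A)\geq\sqrt{p^*}$ for $A:=\{N_{\Delta_j'}\geq n^*\}$.

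The conditional density $f(\Conf')$ of $\Conf'$ on $\Delta_j$ given the outside $\Conf$, with respect to $\Poisson^\Intensity_{\Delta_j}$, is proportional to $h_\Lambda(\Conf\cup\Conf')\exp(-H^\psi_\Lambda(\Conf\cup\Conf')+H^\psi_\Lambda(\Conf))$. Iterating Lemma~\ref{lemme_domination_papangelou} yields $h_\Lambda(\Conf\cup\Conf'\cup Y)\geq\ConstantBis^{|Y|}h_\Lambda(\Conf\cup\Conf')$ for every $Y\subset\Lambda$, and combined with the boundary non-positivity of $\psi$ one gets, for every $Y\subset\Delta_j'$,
\[
\frac{f(\Conf'\cup Y)}{f(\Conf')}\geq \ConstantBis^{|Y|}\exp\Bigl(-\!\!\!\sum_{y\in Y,\,x\in\Conf'_{\Delta_j}}\!\!\!\psi^+(y-x)\;-\!\!\sum_{\{y,y'\}\subseteq Y}\psi^+(y-y')\Bigr).
\]
For each $\Conf'$ with $k:=N_{\Delta_j'}(\Conf')<n^*$, I then integrate this pointwise inequality over the valid augmentations $Y\subset\Delta_j'$ of size $n^*-k$. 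The Poisson reference contributes an extra factor $\Intensity^{n^*-k}/(n^*-k)!$, so provided the integrated density ratio admits a uniform positive lower bound $V^*$ (in $\Conf,\Conf',\Lambda$), the bad-versus-good comparison gives $M^{\Intensity,\Proportion}_{\Lambda,\Delta_j,\Conf}(A^c)\leq C\Intensity^{-1}M^{\Intensity,\Proportion}_{\Lambda,\Delta_j,\Conf}(A)$ for a constant $C$ independent of $\Proportion$. Taking $\Intensity$ large enough yields the desired $\sqrt{p^*}$ bound.

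The hard part is therefore establishing the uniform positive lower bound on
\[
I(\Conf'):=\int_{(\Delta_j')^{n^*-k}}\1_{Y\text{ valid}}\,\exp\Bigl(-\!\sum\psi^+(y-x)-\!\sum\psi^+(y-y')\Bigr)dY,
\]
because the points of $\Conf'\cap\Delta_j$ close to $\Delta_j'$ are uncontrolled in number and position and may interact strongly with the augmented $Y$ through $\psi^+$. The key is a Tchebychev-type argument on the $Y$-positions: by (A4) the integral $\int_{|z|\geq r_1}\psi^+(z)\,dz$ is finite, and combined with (A5) ensuring that the packing region (mutual distances $\geq r_1$, distances $\geq r_1$ from $\Conf'_{\Delta_j'}$) has Lebesgue measure at least $|\Delta_j'|-(n^*-1)|B(0,r_1)|>0$, the subset of valid $Y$ on which the internal $\psi^+$ sum exceeds a fixed threshold has small measure. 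On the complement the exponential is bounded below by a positive constant depending only on $\psi,r_1,r_2,\delta,n^*$, which delivers the sought $V^*>0$ and completes the proof.
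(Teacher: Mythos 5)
Your overall strategy is the same as the paper's: write out the conditional Papangelou intensity of $M^{\Intensity,\Proportion}_{\Lambda,\Delta_j,\Conf}$, use Lemma~\ref{lemme_domination_papangelou} to lower-bound the density ratio upon insertion of new points, reduce to a sub-cell at distance $r_2$ from $\partial\Delta_j$ so that (A4) makes the interaction with the exterior $\Conf$ non-positive, and then combine a Dobrushin--Minlos packing bound with Markov's inequality against the finite $\psi^+$ mass from (A4). The paper implements this through a one-point-at-a-time telescoping ratio $M(N_{\Delta_j}=n+1)/M(N_{\Delta_j}=n)$, whereas you insert a block of $n^*-k$ points at once; the block version additionally forces you to control mutual $\psi^+$ interactions among the inserted points and to handle the many-to-one combinatorics of the augmentation map, but these are manageable and the idea is recognisably the same.

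There is, however, a genuine gap in the way you set up the bad event. You replace the target event $\{N_{\Delta_j}\geq n^*\}$ by the \emph{smaller} event $A=\{N_{\Delta_j'}\geq n^*\}$, and consequently work with bad configurations $\Conf'$ for which only $N_{\Delta_j'}(\Conf')<n^*$ is known. But then the number of points of $\Conf'$ in the boundary layer $\Delta_j\setminus\Delta_j'$ is completely unconstrained. Your packing argument only removes $r_1$-balls around the at-most-$n^*-1$ points of $\Conf'_{\Delta_j'}$; it does nothing about points of $\Conf'$ sitting in $\Delta_j\setminus\Delta_j'$ arbitrarily close to (or inside) an $r_1$-neighbourhood of the insertion region $\Delta_j'$. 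For such $x$ the quantity $\psi^+(y-x)$ is not controlled by (A4): (A4) only guarantees $\int_{|z|\geq r_1}\psi^+(z)\,dz<\infty$, and $\psi^+$ may blow up (even be $+\infty$, hard-core) at distances below $r_1$. Even if you additionally excluded $r_1$-balls around all of $\Conf'_{\Delta_j}$, the packing bound $|\Delta_j'|-(n^*-1)|B(0,r_1)|$ would no longer hold, and the Markov bound would scale like $|\Conf'_{\Delta_j}|\cdot\Constant^{-1}\int_{|z|\geq r_1}\psi^+$, which is not uniform in $\Conf'$. So the claimed uniform lower bound $V^*>0$ on $I(\Conf')$ is false as stated, and the inequality $M(A^c)\leq C\Intensity^{-1}M(A)$ does not follow.

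The fix is simple and brings you back to the paper's set-up: use the bad event $\{N_{\Delta_j}<n^*\}$ (the whole cell, not the sub-cell). On that event $\Conf'$ has fewer than $n^*$ points in all of $\Delta_j$, so excluding $r_1$-balls around every point of $\Conf'_{\Delta_j}$ still leaves a region of volume at least $|\Delta_j'|-(n^*-1)|B(0,r_1)|>0$ by (A5), and the Markov bound involves at most $n^*-1$ interacting points, giving the required uniform $V^*$. The paper's telescoping-ratio formulation $M(N_{\Delta_j}=n+1)/M(N_{\Delta_j}=n)$ builds this constraint in automatically and avoids the block-insertion bookkeeping entirely, which is why it is the cleaner route.
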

The proof of this lemma is done in the appendix in Section \ref{section_appendix}.
Using this lemma, and with \eqref{eq_borne_connectivity} we obtain
\begin{align*}
M^{\Intensity, \Proportion}_{\Lambda,\Delta_j,\Conf}
(\Delta_j \text{ is good})
\geq p^*.
\end{align*}
Furthermore by construction and from \eqref{eq_borne_connectivity} the probability that two given neighbouring cells are connected, conditioned on the fact that they are good, is at least $p^*$.

Therefore by applying Holley's inequality, see Theorem 2.3 in \cite{grimmett_livre_rcm} we have
\begin{align*}
\int N_{\Delta \leftrightarrow \infty}(\Conf,E) 
\RCM (d\Conf,dE)
\geq
n^* \times \theta (p^*) := \epsilon >0.
\end{align*}
\section{Proofs of Theorem \ref{theo_existence_Potts}  and Theorem \ref{theo_phase_transition_Potts}}\label{section_proofs_theo}
Both theorems rely on the standard construction of an infinite volume Potts measure, as a limit of a stationarized finite volume Potts measures considered on a sequence of increasing boxes.
We prove that this sequence admits an accumulation point, for the topology of \emph{local convergence} using a now standard tightness tool which is the \emph{specific entropy}  developed by Georgii \cite{georgii_livre} and adapted to the continuum case by Georgii and Zessin \cite{georgii_zessin}.
Then we prove that this accumulation point is a continuum Gibbs measure, proving Theorem \ref{theo_existence_Potts}.
Finally from this construction and the Proposition \ref{propo_perco_gcrcm}, the phase transition is straightforward.
This type of construction is now standard and have been done in many articles, for the symmetric continuum Potts model \cite{georgii_haggstrom} or for other types of interaction \cite{dereudre_2009,
dereudre_drouilhet_georgii,
dereudre_houdebert,
Dereudre_Houdebert_2019_JSP_PhaseTransitionWR}.


As before we are considering $\delta=r_3/\sqrt{\Dim +3}$ as in Definition \ref{defi_good_cells}.
We consider the square box
\begin{align*}
\Lambda_n
=]-\delta (n+1/2), \delta (n+1/2)]^\Dim
\end{align*}
which is containing exactly 
$(2n+1)^\Dim$ disjoints cells 
$\Delta_j$, for $j \in L_n:= \delta \Z^\Dim \cap \Lambda_n$.

On $\tConfSpace_{\Lambda_n}$ consider the measure 
\begin{align*}
P_n (d\tConf_{\Lambda_n})
:=
\tSpecification^{\Intensity,\Proportion}_{\Lambda_n,1}(d\tConf_{\Lambda_n})
=
\1_{A_{ r_4}^1} (\tConf_{\Lambda_n})
\frac{\exp (-
\Hamiltonian_{\Lambda_n}(\tConf_{\Lambda_n}) )}
{\PartitionFunction_{\Lambda_n}(1)}
\tPoisson^{\Intensity,\Proportion}_{\Lambda_n}(d\tConf_{\Lambda_n})
\end{align*}
 defined in Proposition \ref{propo_projection_potts}.
 Finally we consider the measure
\begin{align*}
\hat{P}_n
:=
\frac{1}{(2n+1)^\Dim} \sum_{j \in L_n} \bar{P}_n \circ \tau_j^{-1},
\end{align*}
where 
$$\bar{P}_n=
\bigotimes_{k \in 2n\Z^\Dim}
P_n \circ \tau_k^{-1}.
$$
By construction the measures $\hat{P}_n$ are invariant by the translation in 
$\delta \Z^\Dim$: 
$\hat{P}_n \in \tProbaSetInvariantDelta$.
\begin{definition}\label{def_local_function_topo}
A measurable  function $f: \tConfSpace \to \R$ is said \emph{local and tame} if there exists a bounded 
$\Lambda \subseteq \R^\Dim$ and a constant $\Constant\geq 0$ such that 
\begin{align*}
f(\tConf)= f(\tConf_{\Lambda})
\ \text{ and } \
| f(\tConf) |
\leq \Constant ( 1 + N_{\Lambda}(\tConf) )
\end{align*}
for all configurations $\tConf \in \tConfSpace$.

A sequence of measures $\nu_n$ converge to $\nu$ in the \emph{local convergence topology} if
$\int f \ d\nu_n \to \int f \ d\nu$ for all local and tame functions $f$.
\end{definition}
\begin{proposition}\label{propo_existene_cluster_point_Potts}
The sequence $(\hat{P}_n)$ admits a cluster point $\hat{P}$ with respect to the local convergence topology.
This cluster point is invariant under the translation 
$\tau_x$, $x\in \delta \Z^\Dim$, and it is a Potts measure:
$\hat{P} \in \GibbsPotts_{\theta_\delta}(\Intensity, \Proportion)$.
\end{proposition}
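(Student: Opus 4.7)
The strategy is the now classical Georgii--Zessin construction of infinite-volume Gibbs measures via specific entropy, adapted to the Potts setting as in \cite{georgii_haggstrom}.

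\textit{Step 1 (Tightness via specific entropy).} The first task is to bound the specific entropy $\Entropy(\hat{P}_n \mid \tPoisson^{\Intensity,\Proportion})$ uniformly in $n$. Since $P_n$ has density $\1_{A^1_{r_4}} e^{-\Hamiltonian_{\Lambda_n}} / \PartitionFunction_{\Lambda_n}(1)$ with respect to $\tPoisson^{\Intensity,\Proportion}_{\Lambda_n}$, its relative entropy on $\Lambda_n$ is controlled by $\mathbb{E}_{P_n}[\Hamiltonian_{\Lambda_n}] - \log \PartitionFunction_{\Lambda_n}(1)$. Using positivity of $\phi$ from (A1) and the (super)stability of $\psi$ from (A3), one obtains a uniform bound of order $|\Lambda_n|$ on the Hamiltonian contribution, while $\PartitionFunction_{\Lambda_n}(1) \geq e^{-\Intensity |\Lambda_n|}$ gives a matching lower bound on the normalization. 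The periodization and translation-averaging defining $\hat{P}_n$ do not inflate the entropy per unit volume, so $\Entropy(\hat{P}_n \mid \tPoisson^{\Intensity,\Proportion}) \leq C$ uniformly. Invoking the Georgii--Zessin compactness theorem \cite{georgii_zessin}, the level set is sequentially compact for the local convergence topology, yielding a cluster point $\hat{P}$ along a subsequence $(\hat{P}_{n_k})$.

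\textit{Step 2 (Invariance).} By construction each $\hat{P}_n$ belongs to $\tProbaSetInvariantDelta$, and this set is closed in the local convergence topology since translations $\tau_x$ for $x \in \delta \Z^\Dim$ act continuously on local and tame functions. Hence $\hat{P} \in \tProbaSetInvariantDelta$.

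\textit{Step 3 (DLR equations).} It remains to show $\hat{P} \in \GibbsPotts(\Intensity, \Proportion)$. Fix a bounded $\Lambda$ and a local tame test function $f$ supported in a larger bounded set $\Lambda'$. The plan is to establish a finite-volume DLR equation for $\bar{P}_n$ as soon as $\Lambda_n$ (shifted appropriately) contains $\Lambda'$ plus a buffer of width $r_4$: on this event the boundary-colour constraint $A^1_{r_4}$ does not interfere inside $\Lambda$, and the definition of $P_n$ as a specification directly yields
\begin{align*}
\int f \, d \bar{P}_n = \int \int f(\tConf'_\Lambda \tConf_{\Lambda^c}) \, \tSpecification^{\Intensity,\Proportion}_{\Lambda, \tConf}(d\tConf'_\Lambda) \, \bar{P}_n(d\tConf).
\end{align*}
Averaging over translations and passing to the limit along $n_k$ on both sides gives the desired DLR equation, provided that the kernel $\tConf \mapsto \int f(\tConf'_\Lambda \tConf_{\Lambda^c}) \tSpecification^{\Intensity,\Proportion}_{\Lambda, \tConf}(d\tConf'_\Lambda)$ is a local tame function of $\tConf$, or at least is approximable by such.

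\textit{Main obstacle.} The key difficulty lies in Step 3, because $\Hamiltonian_\Lambda$ is not itself local and tame when $\psi$ has infinite range: the integrand defining the specification kernel depends on the entire restriction $\tConf_{\Lambda^c}$ through the pair potential $\psi$. The standard remedy, which I would import here, is to exploit the uniform entropy bound of Step 1 to show that $\hat{P}$ is concentrated on tempered configurations (via Ruelle's estimates for superstable potentials, assumption (A3)--(A4)), so that $0 < \PartitionFunction_\Lambda(\tConf) < \infty$ for $\hat{P}$-a.e.\ $\tConf$. On temperedness sets, the specification kernel can be monotonically approximated by local tame functionals obtained by truncating $\psi$ to finite range; assumption (A4) provides the dominated convergence needed to conclude. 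Once this technical uniform integrability is established, the limit of both sides of the finite-volume DLR equation yields $\hat{P} \in \GibbsPotts_{\theta_\delta}(\Intensity, \Proportion)$.
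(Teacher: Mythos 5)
Your proposal follows the same three-part strategy as the paper: specific-entropy tightness to extract a cluster point, translation invariance, and then DLR via temperedness plus a local approximation of the specification kernel. One technical slip: you claim the periodization $\bar{P}_n$ satisfies the DLR($\Lambda$) equation with the full specification $\tSpecification^{\Intensity,\Proportion}_{\Lambda,\cdot}$ once $\Lambda$ sits in a block with an $r_4$-buffer. This is not quite true, because $\bar{P}_n$ does carry points in the \emph{other} blocks, and $\psi$ has infinite range, so the conditional law of $\bar{P}_n$ in $\Lambda$ (which ignores cross-block $\psi$-interactions) differs from $\tSpecification^{\Intensity,\Proportion}_{\Lambda,\tConf}$ (which does not). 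The paper sidesteps this by introducing the auxiliary non-normalized sequence $\hat{P}_n^\Lambda = \frac{1}{(2n+1)^\Dim}\sum_{j}\1_{\Lambda\subseteq\tau_j(\Lambda_n)}\,P_n\circ\tau_j^{-1}$ built from the \emph{single-block} translates $P_n\circ\tau_j^{-1}$ rather than the periodization: these have no particles outside $\tau_j(\Lambda_n)$, so the exterior Hamiltonian is genuinely what compatibility gives, and DLR($\Lambda$) holds exactly (Lemma \ref{lemme_dlr_suite}). The discrepancy between $\hat{P}_n^\Lambda$ and $\hat{P}_n$ is then shown negligible. Your ``main obstacle'' paragraph does correctly identify the non-locality of the kernel through $\psi$ and proposes temperedness plus truncation of $\psi$ with (A4)-controlled errors, which is the content of Lemma \ref{lemme_local_approximation} applied \emph{uniformly} in $n$; be aware that uniformity is needed to close the $3\epsilon$ argument, so the approximation bound must hold for both the limit $\hat{P}$ and the whole family $\hat{P}_n^\Lambda$ simultaneously.
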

\begin{remark}
In the following, to lighten the notation, we will avoid to take a subsequence and assume that $(\hat{P}_n)$ converges to $\hat{P}$.
\end{remark}
We will first admit this proposition and conclude the proofs of Theorem \ref{theo_existence_Potts} and Theorem \ref{theo_phase_transition_Potts}.
\subsection{Proof of Theorem \ref{theo_existence_Potts}}
Proposition \ref{propo_existene_cluster_point_Potts} is not enough to conclude directly the proof of Theorem \ref{theo_existence_Potts}, since the measure $\hat{P}$ is not invariant under all translation of $\R^\Dim$.
But considering the measure
\begin{align*}
\widetilde{P}
:=
\frac{1}{\delta^\Dim}
\int_{]-\delta/2,\delta/2]^\Dim}
\hat{P} \circ \tau_x^{-1} dx,
\end{align*}
we obtain a measure which is by construction invariant under all translation of 
$\R^\Dim$.
This measure satisfies $0<\PartitionFunction_{\Lambda}(.)<\infty$ 
$\widetilde{P}$-almost surely for every bounded $\Lambda$, and from the translation invariance of the interaction, we obtain 
\begin{align*}
\int & \int  f(\tConf'_{\Lambda}\tConf_{\Lambda^c})
\tSpecification^{\Intensity,\Proportion}_{\Lambda,\tConf}(d\tConf'_{\Lambda})
\widetilde{P}(d\tConf)
\\ & =
\underset{\left] -\frac{\delta}{2}, \frac{\delta}{2} \right]^\Dim}{\int}
\int \int
f(\tConf'_{\Lambda} \tau_x(\tConf)_{\Lambda^c})
\frac{e^{- \Hamiltonian_{\Lambda} 
(\tConf'_{\Lambda} \tau_x(\tConf)_{\Lambda^c})}}
{\delta^\Dim   \PartitionFunction_{\Lambda}(\tau_x(\tConf))}
\Poisson^{\Intensity, \Proportion}_{\Lambda}(d \tConf')
\hat{P}(d\tConf)   dx
\\ & =
\underset{\left] -\frac{\delta}{2}, \frac{\delta}{2} \right]^\Dim}{\int}
\int \int
f \circ \tau_x \left(
\tConf'_{\tau_x^{-1}(\Lambda)} \tConf_{\tau_x^{-1}(\Lambda)^c}
\right)
\frac{e^{- \Hamiltonian_{\Lambda} \circ \tau_x
\left(
\tConf'_{\tau_x^{-1}(\Lambda)} \tConf_{\tau_x^{-1}(\Lambda)^c}
\right)}}
{\delta^\Dim   \PartitionFunction_{\Lambda}(\tau_x(\tConf))}
d\Poisson^{\Intensity, \Proportion}
d\hat{P} dx
\\ & =
\frac{1}{\delta^\Dim}
\underset{\left] -\frac{\delta}{2}, \frac{\delta}{2} \right]^\Dim}{\int}
\int
f \circ \tau_x 
\ d \hat{P} \ dx
=
\int
f \ 
d \widetilde{P}
\end{align*}
and therefore $\widetilde{P}$ is a Potts measure invariant under 
all translation of $\R^\Dim$.

Since the set of translation invariant Potts measures is a convex set with extremal elements being the ergodic Potts measures, see \cite{georgii_livre}, the theorem is proved.
\subsection{Proof of Theorem \ref{theo_phase_transition_Potts}}
Let us consider $\Delta=\Delta_0=\Lambda_0=]-\delta /2, \delta /2]^\Dim$.
Let $i \not = 1$ an other colour with maximal proportion (i.e. $\alpha_1=\alpha_i$) , we obtain from Proposition \ref{propo_proportion_couleur_potts} that
\begin{align*}
\int ( N_{\Delta, 1} - N_{\Delta, i}) d \hat{P}_n
&=
\frac{1}{(2n+1)^\Dim}
\sum_{j \in L_n}
\int ( N_{\Delta_j, 1} - N_{\Delta_j, i}) d {P}_n
\\ &=
\frac{1}{(2n+1)^\Dim}
\sum_{j \in L_n}
N_{\Delta_j \leftrightarrow \infty} \ d \RCMn.
\end{align*}
Finally we obtain from Proposition \ref{propo_perco_gcrcm} that for $\Intensity$ large enough (but independent of $\Proportion$) that
\begin{align*}
\int ( N_{\Delta, 1} - N_{\Delta, i}) d \hat{P}_n
\geq \epsilon>0.
\end{align*}
Since the integrated function is local and tame, the same bound is valid for 
the probability measure $\hat{P}$.

For the measure $\widetilde{P}$, even if the translated 
$\tau_x(\Delta)$, $x \in ]-\delta /2, \delta /2]^\Dim$ is not a cell as defined before, using the translation invariance of $\hat{P}$ by vectors $j \in (\delta \Z)^\Dim$, one can translate back $\tau_x(\Delta)$ into $\Delta$, which proves that
\begin{align*}
\int ( N_{\Delta, 1} - N_{\Delta, i}) d \widetilde{P}
\geq \epsilon>0,
\end{align*}
see Figure \ref{figure_translation}.

\begin{figure}[h]\label{figure_translation}
\includegraphics[width=6.5cm]{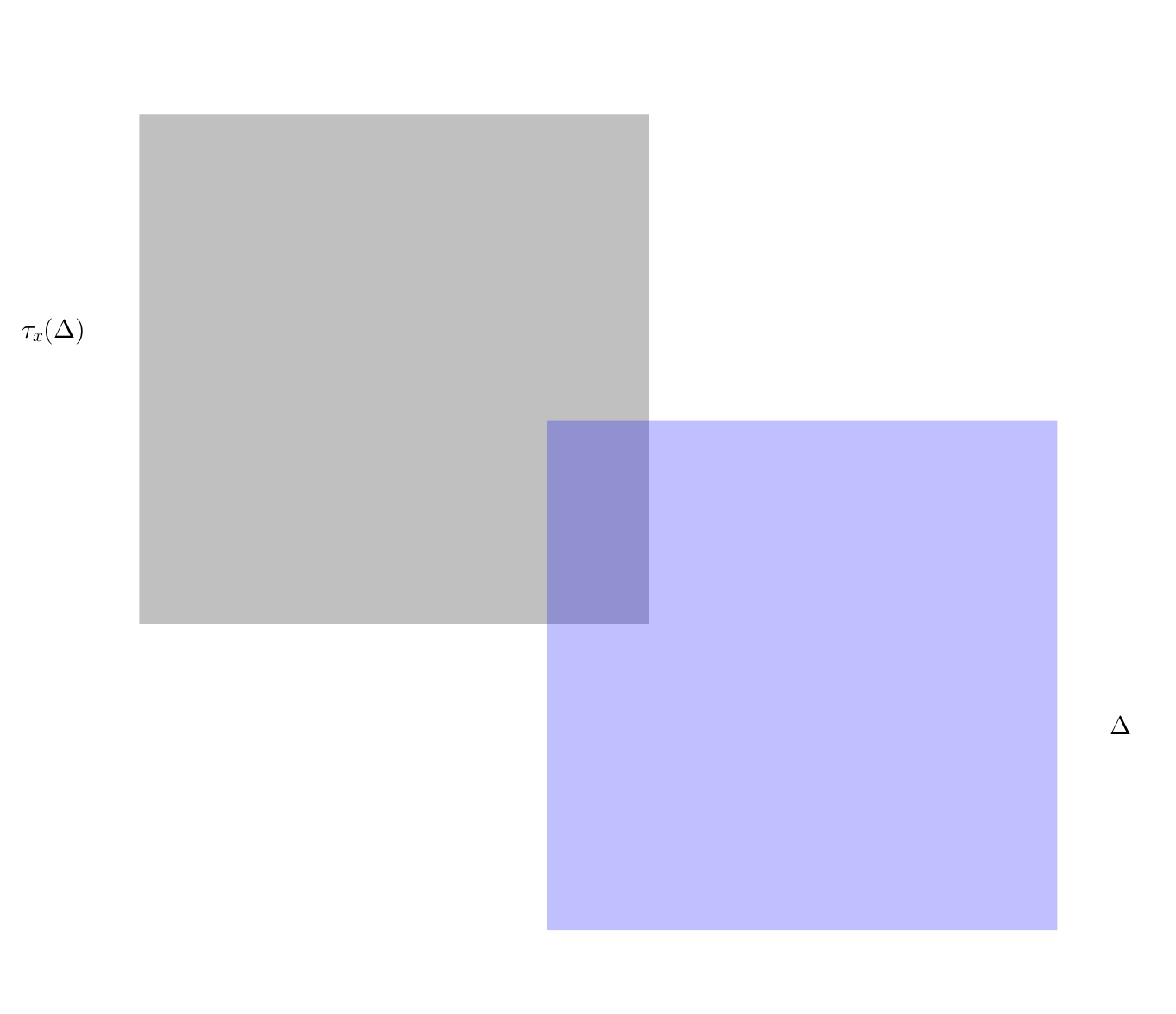}
\includegraphics[width=6.5cm]{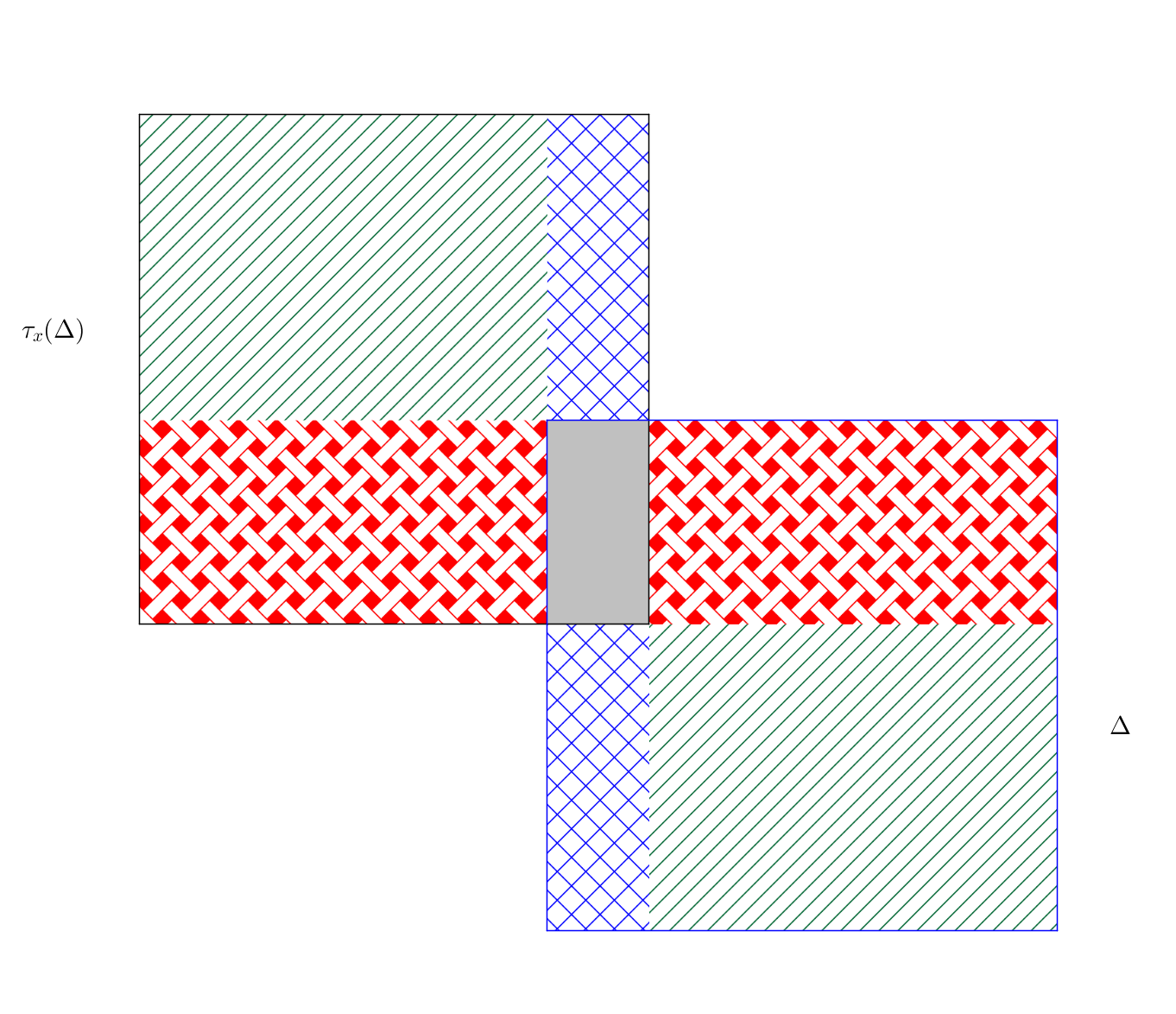}
\caption{Left: The cell $\Delta$ and its translation $\tau_x(\delta)$. Right: how to translate back piece of $\tau_x(\Delta)$ to obtain again $\Delta$.}
\end{figure}

So the probability measure $\widetilde{P}$ is a Potts measure with on average more particles of colour $1$ that any other colours.
By repeating the same construction for every colour $i$ with maximal proportion, we obtain $\ProportionMax$ Potts measures invariant by all translations in $\R^\Dim$ and which are different, since one different colour dominates the others for each measure.

From the ergodic decomposition of translation invariant (by a vector of $\R^\Dim$) Potts measures, see \cite{georgii_livre}, 
it is clear that one can find at least $\ProportionMax$ different ergodic Potts measures.
The theorem is proved.

\subsection{Proof of Proposition \ref{propo_existene_cluster_point_Potts}}
This type of construction is classical.
It was done for instance for the symmetric Potts model in \cite{georgii_haggstrom}, for the quermass-interaction model in \cite{dereudre_2009}, and for many other cases \cite{dereudre_drouilhet_georgii,dereudre_houdebert,
Dereudre_Houdebert_2019_JSP_PhaseTransitionWR}.
The first step is to construct a good candidate.
This is done using the specific entropy as a tightness tool.
Then one has to prove that this good candidate is indeed a Potts measure, which is done by approximation of the interaction.

$\bullet$ {\bf Step 1}: Construction of a good candidate
\begin{definition}
For a measure $P \in \tProbaSetInvariantDelta$
with finite first moment, meaning that $\int N_{\Delta_0} d P<\infty$, 
we define the specific entropy
\begin{align}\label{eq_def_specific_entropy}
\Entropy(P)
:=
\lim_{n \to \infty}
\frac{1}{|\Lambda_n|}
\Entropy_{\Lambda_n}(P| \Poisson^{\Intensity, \Proportion}),
\end{align}
with
$\Entropy_{\Lambda_n}(P| \Poisson^{\Intensity, \Proportion})
= \int \log \left(
\frac{d P_{\Lambda_n}}{d \Poisson^{\Intensity, \Proportion}_{\Lambda_n}}
(\tConf_{\Lambda_n}) \right)  P(d\tConf)$, or 
$+\infty$ if $P_{\Lambda_n}$ 
(the restriction of $P$ in $\tConfSpace_{\Lambda_n}$) is not absolutely continuous with respect to $\Poisson^{\Intensity, \Proportion}_{\Lambda_n}$.
\end{definition}
The convergence in \eqref{eq_def_specific_entropy} is proved in \cite{georgii_zessin}. The next proposition, also proved in \cite{georgii_zessin}, stated the tightness of the level sets of the specific entropy.
\begin{proposition}
On the set of probability measure $P \in \tProbaSetInvariantDelta$ with finite first moment, the specific entropy is \emph{affine} and \emph{upper semi-continuous}.
Furthermore for all $\Constant \geq 0$, the level set $\Entropy(P)\leq \Constant$ is compact and sequentially compact with respect to the local convergence topology.
\end{proposition}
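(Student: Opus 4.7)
The plan is to establish the three properties (affineness, upper semi-continuity, compactness of level sets) by combining standard properties of the finite-volume relative entropy $\Entropy_{\Lambda_n}(\cdot|\Poisson^{\Intensity,\Proportion})$ with the $\delta\Z^\Dim$-stationarity of measures in $\tProbaSetInvariantDelta$. The four classical ingredients I would rely on are: the Donsker--Varadhan variational representation, the approximate affineness identity, the subadditivity of finite-volume relative entropy on stationary measures (with $\Poisson^{\Intensity,\Proportion}$ playing the role of a product reference), and the entropy inequality.

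For affineness, I would apply the elementary two-sided estimate
\begin{align*}
\lambda \Entropy_{\Lambda_n}(P_1|\Poisson^{\Intensity,\Proportion}) &+ (1-\lambda) \Entropy_{\Lambda_n}(P_2|\Poisson^{\Intensity,\Proportion}) - \log 2 \\
&\le \Entropy_{\Lambda_n}(\lambda P_1 + (1-\lambda) P_2|\Poisson^{\Intensity,\Proportion}) \\
&\le \lambda \Entropy_{\Lambda_n}(P_1|\Poisson^{\Intensity,\Proportion}) + (1-\lambda) \Entropy_{\Lambda_n}(P_2|\Poisson^{\Intensity,\Proportion}),
\end{align*}
valid for any relative entropy. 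Dividing by $|\Lambda_n|$ and letting $n\to\infty$ makes the $\log 2$ term vanish, and both bounds collapse onto $\lambda \Entropy(P_1) + (1-\lambda) \Entropy(P_2)$, giving affineness.

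For compactness of the level sets $\{\Entropy \le \Constant\}$, the main step is tightness. Applying the entropy inequality
\begin{align*}
\int g\, dP \le \log \int e^g\, d\Poisson^{\Intensity,\Proportion}_{\Lambda_n} + \Entropy_{\Lambda_n}(P|\Poisson^{\Intensity,\Proportion})
\end{align*}
to $g=tN_{\Lambda_n}$ with $t>0$ small and using $\Entropy_{\Lambda_n}(P|\Poisson^{\Intensity,\Proportion}) \le \Constant |\Lambda_n| + o(|\Lambda_n|)$, one obtains a uniform (in $P$ and in $n$) exponential moment for $N_{\Delta_0}$ under the marginals of $P$. Combined with the fact that the local convergence topology tests only against local tame functions, a standard diagonal extraction over an increasing sequence of local $\sigma$-algebras yields sequential compactness.

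Upper semi-continuity is the delicate point and is where I expect the main obstacle. The idea is that for $P \in \tProbaSetInvariantDelta$ the product structure of $\Poisson^{\Intensity,\Proportion}$ together with the chain rule of entropy makes the sequence $\Entropy_{\Lambda_n}(P|\Poisson^{\Intensity,\Proportion})$ subadditive, and then Fekete's lemma promotes the limit defining $\Entropy(P)$ into an \emph{infimum}: $\Entropy(P) = \inf_n |\Lambda_n|^{-1}\Entropy_{\Lambda_n}(P|\Poisson^{\Intensity,\Proportion})$. Each map $P \mapsto \Entropy_{\Lambda_n}(P|\Poisson^{\Intensity,\Proportion})$ is not only lower semi-continuous (via the variational formula) but also continuous in the local convergence topology \emph{when restricted to the level set} $\{\Entropy \le \Constant\}$, because the uniform exponential moments produced above imply that the finite-volume densities are uniformly integrable, so one can pass to the limit in $\int f_P \log f_P \, d\Poisson^{\Intensity,\Proportion}_{\Lambda_n}$ via de la Vall\'ee-Poussin. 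An infimum of continuous functions is upper semi-continuous, which yields the claim. The hard part is exactly this uniform integrability step: it requires that local convergence entails total-variation convergence of the restrictions to $\Lambda_n$, which must be checked carefully and relies on the tame test functions in the definition of the topology.
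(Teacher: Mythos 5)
The paper does not actually prove this proposition: it states it and refers to Georgii and Zessin for the proof, so you are supplying an argument the authors chose to import rather than reproduce.

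Your affineness and compactness paragraphs are sound as sketches and follow the standard route: the two-sided estimate whose $\log 2$ correction dies under division by $|\Lambda_n|$, and the entropy inequality applied with $g=tN_{\Lambda_n}$ to obtain uniform exponential moments, then tightness and diagonal extraction.

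The semi-continuity paragraph, however, contains a genuine error. For $\delta\Z^\Dim$-stationary $P$ tested against the \emph{product} reference measure $\Poisson^{\Intensity,\Proportion}$, the chain rule for relative entropy combined with Jensen's inequality gives, for disjoint boxes $\Lambda_1,\Lambda_2$,
\begin{align*}
\Entropy_{\Lambda_1 \cup \Lambda_2}(P|\Poisson^{\Intensity,\Proportion})
\ge
\Entropy_{\Lambda_1}(P|\Poisson^{\Intensity,\Proportion})
+
\Entropy_{\Lambda_2}(P|\Poisson^{\Intensity,\Proportion}),
\end{align*}
so the finite-volume sequence is \emph{superadditive}, not subadditive as you claim. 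Fekete's lemma then identifies $\Entropy(P)$ with the \emph{supremum} of $|\Lambda_n|^{-1}\Entropy_{\Lambda_n}(P|\Poisson^{\Intensity,\Proportion})$, not the infimum. Each of these normalized finite-volume functionals is lower semi-continuous for the local convergence topology by the Donsker--Varadhan variational formula tested against bounded local functions, and a supremum of lower semi-continuous functions is lower semi-continuous. This is what is actually needed: lower semi-continuity makes the sublevel sets $\{\Entropy\le\Constant\}$ closed, and closedness together with tightness gives compactness, whereas upper semi-continuity says nothing useful about sublevel sets. The word ``upper'' in the proposition as printed must be a typo for ``lower''; your attempt to literally prove upper semi-continuity is therefore aimed at a false target. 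Once this is corrected, the machinery you invoke at the end of the paragraph (uniform integrability of the finite-volume densities, passage to the limit in $\int f_P \log f_P$, and the claim that local convergence entails total-variation convergence of the restrictions) is both unnecessary and, in the last instance, false: local convergence of point process laws does not imply total-variation convergence of their finite-volume marginals. The supremum-of-lower-semi-continuous-functions route is self-contained and avoids all of this.
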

So from this proposition it is enough to prove that the specific entropy 
of the sequence $(\hat{P}_n)$ is uniformly bounded.
It is clear that each $\hat{P}_n$ has finite first moment (but it is not clear yet that one can find an uniform bound).
From the fact that the specific entropy is affine, we obtain that 
\begin{align*}
\Entropy (\hat{P}_n)
&= \frac{1}{| \Lambda_n |} 
\Entropy_{\Lambda_n}  \left( 
P_n | \Poisson^{\Intensity, \Proportion}_{\Lambda_n}
\right)
=
\frac{-\log (\PartitionFunction_{\Lambda_n}(1))
- \int_{\tConfSpace} H_{\Lambda_n}(\tConf_{\Lambda_n})
P_n ( d \tConf_{\Lambda_n})}{| \Lambda_n |} 
\end{align*}
For the assumptions (A1) and (A3) we have
\begin{align}\label{eq_borne_energy}
H_{\Lambda_n}(\tConf_{\Lambda_n})
\geq
H^\Psi_{\Lambda_n}(\Conf_{\Lambda_n})
\geq
\sum_{j \in L_n}a N_{\Delta_j}(\Conf)^2 - b  N_{\Delta_j}(\Conf)
\geq -(2n+1)^\Dim \Constant,
\end{align}
where $\Constant=B^2/4A$.
Furthermore from standard computation we obtain
\begin{align*}
\PartitionFunction_{\Lambda_n}(1)
\geq e^{- \Intensity |\Lambda_n|}.
\end{align*}
Therefore we obtain
\begin{align*}
\Entropy(\hat{P}_n)
\leq
c_1
\end{align*}
for a positive finite constant $c_1$ independent of $n$.
Hence we have the existence of a cluster point $P$ with respect to the local convergence topology.
In the following we omit to take a subsequence to lighten the notation.
\begin{remark}
To obtain \eqref{eq_borne_energy} we used the superstability and regularity from assumption (A3).
In the case when the potential $\psi$ is non-negative, the Hamiltonian is non-negative and we obtain again the existence of a constance $c_1$.
\end{remark}

$\bullet$ {\bf Step 2}: the partition function $\PartitionFunction_\Lambda$ is $\hat{P}$-a.s non degenerate.
\newline
To prove this, let us first prove that $\hat{P}$ has finite second moment.
\begin{lemma}\label{lemma_second_moment_candidate}
Under assumptions (A1) and (A3), we have for all $n$
$$
\int N_{\Delta_0}^2 \ d \hat{P}
< \infty
\hspace{1cm}
\text{and}
\hspace{1cm}
\int N_{\Delta_0}^2 \ d \hat{P}_n
< \infty.
$$
\end{lemma}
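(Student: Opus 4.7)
The plan is to reduce the claim to a uniform cellwise second-moment estimate on the Potts specification $P_n$, and then to transfer that bound to $\hat{P}$ via the local convergence topology. The first statement is essentially automatic: the density of $P_n$ with respect to $\tPoisson^{\Intensity,\Proportion}_{\Lambda_n}$ is proportional to $\1_{A_{r_4}^1}e^{-\Hamiltonian_{\Lambda_n}}$, and stability of $\Hamiltonian$ (which holds under (A1) together with either alternative of (A3), giving some $B\geq 0$ with $\Hamiltonian_{\Lambda_n}\geq -BN_{\Lambda_n}$) yields a Poisson-type upper bound on that density, so $\int N_{\Delta_0}^2 dP_n<\infty$; since $\hat P_n$ is a finite average of shifted products of $P_n$, the same bound is inherited by $\hat P_n$.

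The real content is uniformity in $n$. Expanding $\hat P_n$ and using the independence across $2n\Z^\Dim$-translates built into $\bar P_n$, one obtains an identity
\begin{align*}
\int N_{\Delta_0}^2\,d\hat{P}_n = \frac{1}{(2n+1)^{\Dim}} \sum_{j\in L_n} \int N_{\Delta_j}^2\,dP_n
\end{align*}
(after the obvious relabelling of cells inside each translate of $\Lambda_n$), reducing the problem to proving $\sup_{n,\,j\in L_n} \int N_{\Delta_j}^2\,dP_n<\infty$. I would split the two alternatives of (A3). When $\psi\geq 0$, both $\phi$ and $\psi$ are non-negative, so adding a point to $\tConf$ can only increase $\Hamiltonian_{\Lambda_n}$, and the colour constraint $\1_{A_{r_4}^1}$ only further restricts the density: the Papangelou intensity of the position marginal of $P_n$ is then pointwise bounded by $\Intensity$, and the standard stochastic domination criterion gives $\int N_{\Delta_j}^2\,dP_n\leq \Intensity|\Delta_0|+(\Intensity|\Delta_0|)^2$. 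When $\psi$ is superstable and lower regular, $\Hamiltonian_{\Lambda_n}\geq \Hamiltonian^\psi_{\Lambda_n}$ is itself superstable with the constants $a,b$ of (A3), and I would invoke the classical Ruelle super-stability estimate to produce $c_1,c_2>0$ independent of $n$ and $j\in L_n$ with $P_n(N_{\Delta_j}\geq k)\leq c_1 e^{-c_2 k^2}$, from which the uniform second-moment bound follows by summation in $k$.

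The Ruelle-type estimate above is the main technical obstacle; the standard proof conditions on $\tConf_{\Lambda_n\setminus\Delta_j}$, extracts from the superstability a Gaussian weight $e^{-aN_{\Delta_j}^2+bN_{\Delta_j}}$ in the conditional density of $\tConf_{\Delta_j}$, and uses the lower regularity of $\psi$ to absorb the cross-interaction of $\Delta_j$ with its neighbouring cells into a term of order $N_{\Delta_j}$. Once the uniform bound $\sup_n \int N_{\Delta_0}^2\,d\hat P_n\leq C$ is established, I would pass to the limit by tame approximation: for every $K>0$ the map $\tConf\mapsto \min(N_{\Delta_0}^2(\tConf),K)$ is local and tame in the sense of Definition \ref{def_local_function_topo}, so local convergence $\hat P_n\to\hat P$ yields $\int\min(N_{\Delta_0}^2,K)\,d\hat P\leq C$, and monotone convergence as $K\to\infty$ concludes. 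Apart from the Ruelle estimate, every step is either a direct computation on the product measure $\bar P_n$, a standard Poisson domination, or a tame-function approximation.
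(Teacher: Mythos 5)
Your reduction to a bound on $\sup_{n,\,j\in L_n}\int N_{\Delta_j}^2\,dP_n$ and the passage to $\hat P$ by monotone tame approximation match the paper exactly, and your treatment of the case $\psi\ge 0$ via Papangelou/Poisson domination (Georgii--K\"uneth in the paper) is the same. Where you genuinely diverge is the superstable case: you invoke the Ruelle superstability estimate to get Gaussian tails $P_n(N_{\Delta_j}\ge k)\le c_1 e^{-c_2 k^2}$, whereas the paper sidesteps Ruelle entirely with a short entropy argument. Concretely, the paper observes that non-negativity of the relative entropy $\Entropy_{\Lambda_n}(P_n|\Poisson^{\Intensity,\Proportion})$ forces $\int H_{\Lambda_n}\,dP_n\le -\log\PartitionFunction_{\Lambda_n}(1)\le \Intensity|\Lambda_n|$ (the last step by evaluating $\PartitionFunction_{\Lambda_n}(1)$ at the empty configuration), and then rewrites the superstability inequality as
\begin{align*}
\sum_{j\in L_n}N_{\Delta_j}^2 \le \frac{2}{a}H_{\Lambda_n}+\sum_{j\in L_n}\Bigl(\tfrac{2b}{a}N_{\Delta_j}-N_{\Delta_j}^2\Bigr),
\end{align*}
with the second sum bounded by $(2n+1)^{\Dim}b^2/a^2$ since $\tfrac{2b}{a}x-x^2$ is bounded. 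Dividing by $(2n+1)^{\Dim}$ gives a bound uniform in $n$ in three lines, with no conditioning and no bootstrap. Your approach does work, but one should flag that your sketch of the Ruelle estimate is too quick at its hard point: after conditioning on $\Conf_{\Lambda_n\setminus\Delta_j}$, the "Gaussian weight" $e^{-aN_{\Delta_j}^2+bN_{\Delta_j}}$ only appears once the cross-cell interaction $W(\Conf_{\Delta_j},\Conf_{\Lambda_n\setminus\Delta_j})$ is absorbed, and lower regularity only bounds it by $-N_{\Delta_j}\sum_k\psi_{\delta^{-1}|j-k|}N_{\Delta_k}$, whose coefficient depends on the (a priori uncontrolled) neighbouring occupation numbers. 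Controlling that is precisely the bootstrap at the heart of Ruelle's theorem, so presenting it as a mere consequence of conditioning undersells the work; the paper's entropy argument is both shorter and self-contained, which is why it is preferred here.
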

\begin{proof}
Let us now consider the second moment of $\hat{P}_n$:
\begin{align*}
\int_{\tConfSpace}N_{\Delta_0}(\tConf)^2\hat{P}_n(d\tConf)
&=
\frac{1}{(2n+1)^\Dim}
\int_{\tConfSpace}
\sum_{j \in L_n} N_{\Delta_j}(\Conf)^2 P_n(d\tConf)
\\ & \leq
\frac{2}{a(2n+1)^\Dim}
\int_{\tConfSpace} H_{\Lambda_n}(\tConf_{\Lambda_n})
P_n(d\tConf)
\\ & 
 \hspace{1cm} +
\frac{1}{(2n+1)^\Dim}
\int_{\tConfSpace}
\sum_{j \in L_n}\left(
\frac{2b}{a} N_{\Delta_j}(\Conf)-  N_{\Delta_j}(\Conf)^2
\right)  P_n(d\tConf),
\end{align*}
where the last inequality is a consequence of assumptions (A1) and (A3), used as in
\eqref{eq_borne_energy}.
From the non-negativity of the local entropy, we have that
\begin{align*}
\int_{\tConfSpace} H_{\Lambda_n}(\tConf_{\Lambda_n})
P_n(d\tConf)
\leq - \log ((\PartitionFunction_{\Lambda_n}(1))
\leq \Intensity |\Lambda_n|.
\end{align*}
Furthermore there exists a constant $\Constant \geq 0$ such that
\begin{align*}
\int_{\tConfSpace}
\sum_{j \in L_n}\left(
\frac{2b}{a} N_{\Delta_j}(\Conf)- N_{\Delta_j}(\Conf)^2
\right)  P_n(d\tConf)
\leq \Constant (2n+1)^\Dim.
\end{align*}
Putting everything together we obtain 
\begin{align*}
\int_{\tConfSpace} N_{\Delta_0}(\Conf)^2 \hat{P}_n(d\tConf)
\leq
\tilde{\Constant},
\end{align*}
where $\tilde{\Constant}<\infty$ is independent of $n$.

Now the function $N_{\Delta_0}^2$ is local but not tame.
However this is the monotone limit of local and tame functions, which is enough to conclude that
\begin{align*}
\int_{\tConfSpace} N_{\Delta_0}(\Conf)^2 \hat{P}(d\tConf)
\leq
\tilde{\Constant}.
\end{align*}

In the case where $\psi\geq 0$, one can prove immediately from the stochastic domination result of Georgii and Küneth \cite{georgii_kuneth} that the measure $P_n$ is stochastically dominated by $\Poisson^{\Intensity, \Proportion}_{\Lambda_n}$ and therefore the uniform bound is straightforward.
\end{proof}
Now let us  define on $\tConfSpace$ the space of tempered configuration 
\begin{align}
\label{eq_tempered_conf}
\TemperedConfSpace=\left\{ \tConf\in\tConfSpace ,\ \sup_{n \geq 1 } \ \frac{1}{n^{\Dim}}
   \sum_{|i|\leq n}  N_{\Delta_j}^2(\tConf) < \infty \right\}
\end{align}
\begin{lemma} \label{lemme_tempered_non-degenere}
Under assumptions (A1) and (A3), and
for all $\tConf \in \TemperedConfSpace$, the partition function is non-degenerate:
\begin{align*}
0< \PartitionFunction_{\Lambda}(\tConf)<\infty.
\end{align*}
Furthermore we have $\hat{P} (\TemperedConfSpace)=1$.
\end{lemma}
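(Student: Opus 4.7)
The plan is to handle the two assertions separately. For non-degeneracy at a fixed $\tConf \in \TemperedConfSpace$, the lower bound $\PartitionFunction_\Lambda(\tConf) > 0$ is immediate: contributing the empty interior configuration to the defining integral gives $\PartitionFunction_\Lambda(\tConf) \geq e^{-\Intensity|\Lambda|}$, since $\Hamiltonian_\Lambda(\emptyset \cup \tConf_{\Lambda^c}) = 0$ (the Hamiltonian counts only pairs with at least one endpoint in $\Lambda$). For the upper bound, assumption (A1) gives $\Hamiltonian^{\phi}_\Lambda \geq 0$, so it suffices to bound $\Hamiltonian^{\psi}_\Lambda$ from below. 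When $\psi \geq 0$ the estimate $\PartitionFunction_\Lambda(\tConf) \leq 1$ is trivial, so the real work is the superstable and lower regular case of (A3), which I would treat by a Ruelle-type absorption argument.

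In that case I decompose
\begin{align*}
\Hamiltonian^\psi_\Lambda(\Conf'_\Lambda \tConf_{\Lambda^c}) = H^\psi(\Conf'_\Lambda) + W(\Conf'_\Lambda,\tConf_{\Lambda^c}),
\end{align*}
where $W$ is the sum of $\psi(x-y)$ over pairs with one endpoint in $\Conf'_\Lambda$ and one in $\tConf_{\Lambda^c}$. Superstability bounds the interior term below by $\sum_{j}\bigl(aN_{\Delta_j}(\Conf')^2 - bN_{\Delta_j}(\Conf')\bigr)$, and lower regularity bounds $W$ below by $-\sum_{j,k}\psi_{\delta^{-1}|j-k|}N_{\Delta_j}(\Conf')N_{\Delta_k}(\tConf)$, with $j$ indexing cells inside $\Lambda$ and $k$ cells outside. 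Splitting $N_{\Delta_j}(\Conf')N_{\Delta_k}(\tConf)\leq \tfrac{\epsilon}{2}N_{\Delta_j}(\Conf')^2 + \tfrac{1}{2\epsilon}N_{\Delta_k}(\tConf)^2$ with $\epsilon$ so small that $\epsilon\sum_{k\in\Z^\Dim}\psi_{\delta^{-1}|k|}\leq a$ (possible by $\sum_n n^{\Dim-1}\psi_n<\infty$), the quadratic in $\Conf'$ is absorbed into the superstability term, leaving
\begin{align*}
\Hamiltonian_\Lambda(\Conf'_\Lambda \tConf_{\Lambda^c}) \geq \sum_{j \in L_\Lambda}\!\left(\tfrac{a}{2}N_{\Delta_j}(\Conf')^2 - bN_{\Delta_j}(\Conf')\right) - C(\tConf,\Lambda),
\end{align*}
where $C(\tConf,\Lambda)=\tfrac{1}{2\epsilon}\sum_{k}\bigl(\sum_{j\in L_\Lambda}\psi_{\delta^{-1}|j-k|}\bigr)N_{\Delta_k}(\tConf)^2$. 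Grouping $k$ into annuli at distance $\sim m$ from $\Lambda$, the temperedness of $\tConf$ combined with $\sum_m m^{\Dim-1}\psi_m<\infty$ yields $C(\tConf,\Lambda)<\infty$. The remaining Poisson integral factors cell-by-cell into terms $\int \exp(bn-(a/2)n^2)$ weighted by Poisson weights, which is finite.

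For the second assertion $\hat{P}(\TemperedConfSpace)=1$, the plan is to combine Lemma \ref{lemma_second_moment_candidate} with $\delta\Z^\Dim$-translation invariance of $\hat{P}$ to get $\int N_{\Delta_j}^2\,d\hat{P}\leq\ConstantBis$ uniformly in $j$, and then apply the multidimensional Birkhoff ergodic theorem to the $\delta\Z^\Dim$-action with observable $N_{\Delta_0}^2$. That theorem forces the averages $(2n+1)^{-\Dim}\sum_{|j|\leq n}N_{\Delta_j}^2(\tConf)$ to converge $\hat{P}$-almost surely to a finite limit, hence to be bounded in $n$ for $\hat{P}$-almost every $\tConf$; since $(2n+1)^\Dim$ and $n^\Dim$ differ by a bounded factor, this is exactly the defining condition of $\TemperedConfSpace$. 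The main obstacle will be the upper bound on $\PartitionFunction_\Lambda$: the absorption trick that turns the cross-interaction into a manageable perturbation of the superstability quadratic, together with the verification that $C(\tConf,\Lambda)$ remains finite for every tempered $\tConf$ rather than only for configurations bounded cell-by-cell.
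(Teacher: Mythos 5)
Your proof is correct and covers both assertions, but for the upper bound on $\PartitionFunction_\Lambda(\tConf)$ you take a genuinely different route from the paper. The paper absorbs the boundary interaction into the superstability bound by treating it as a perturbation of the \emph{linear} coefficient: it writes $\Hamiltonian_{\Lambda}(\tConf'_\Lambda\tConf_{\Lambda^c}) \geq \sum_{j'} \bigl[a N_{\Delta_{j'}}(\tConf')^2 - \bigl(b + \sum_j \psi_{\delta^{-1}|j-j'|}N_{\Delta_j}(\tConf)\bigr)N_{\Delta_{j'}}(\tConf')\bigr]$ and then claims, from temperedness, a uniform-in-$j'$ bound $\sum_j\psi_{\delta^{-1}|j-j'|}N_{\Delta_j}(\tConf)\leq B'<\infty$ so that each cell contributes a bounded-below quadratic $aN^2-(b+B')N$. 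You instead apply the AM--GM split $N_{\Delta_j}(\Conf')N_{\Delta_k}(\tConf)\leq\tfrac{\epsilon}{2}N_{\Delta_j}(\Conf')^2+\tfrac{1}{2\epsilon}N_{\Delta_k}(\tConf)^2$ and absorb the $\Conf'$-quadratic into the superstability term, leaving a $\tConf$-dependent additive constant $C(\tConf,\Lambda)$ built from $N_{\Delta_k}(\tConf)^2$. Your version is arguably better matched to the hypotheses: temperedness is a \emph{quadratic} condition, and your residual term $C(\tConf,\Lambda)$ depends on $\tConf$ only through $\sum_k (\text{summable kernel})\cdot N_{\Delta_k}(\tConf)^2$, so the role of temperedness is transparent. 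The paper's bound requires extracting linear control on $\sum_{|j-k|=n}N_{\Delta_k}(\tConf)$ from quadratic temperedness, which needs an extra step (Cauchy--Schwarz plus Abel summation on $\psi_n$, using $\sum n^{\Dim-1}\psi_n<\infty$) that the paper does not spell out; your argument also ultimately needs a similar summation-by-parts argument to show $C(\tConf,\Lambda)<\infty$, so neither version is fully detailed at that point, but yours makes the dependence explicit. The lower bound via the empty configuration and the second assertion via Lemma \ref{lemma_second_moment_candidate} together with the ergodic theorem for the $\delta\Z^\Dim$-action coincide with the paper's treatment.
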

This lemma is proved in the appendix in Section \ref{section_appendix}.

$\bullet$ {\bf Step 3}: the measure $\hat{P}$ satisfies the DLR equations.
\newline
It is enough to proves the DLR equations only for the $\Lambda_n$, and starting now we fix $\Lambda=\Lambda_{n_0}$ for a fixed $n_0\in \N$.
Let us consider $f$ a measurable function bounded by one, which we can assume without loss of generality that is is local, i.e 
$f(\tConf)=f(\tConf_{\Lambda_{n_1}})$ for a fixed $n_1$.
we are interested in proving that the following quantity
\begin{align*}
\kappa  :=
\bigg\lvert
\int f d \hat{P}
-
\int \int
f(\tConf'_\Lambda \tConf_{\Lambda^c})
\tSpecification^{\Intensity,\Proportion}_{\Lambda,\tConf}(d\tConf'_{\Lambda})
\hat{P} (d \tConf)
\bigg\rvert
\end{align*}
is small.
The first issue is that the probability measures $\hat{P}_n$ do not satisfy the DLR equations, except in the particular case of $\Lambda \subseteq \Delta_0$.
We are introducing the new sequence of measures 
\begin{align*}
\hat{P}_n^{\Lambda}
:=
\frac{1}{(2n+1)^\Dim}
\sum_{j \in L_N} \1_{\Lambda \subseteq \tau_j(\Lambda_n)}
 P_n \circ \tau_j^{-1}.
\end{align*}
Those are not probability measure, but from the following lemma they satisfy the DLR($\Lambda$) equation and are converging to $\hat{P}$.
\begin{lemma}\label{lemme_dlr_suite}
Each $\hat{P}_n^{\Lambda}$ satisfies the DLR($\Lambda$) equation.
Furthermore if (A1) and (A3) are satisfied,  for all local and tame functions $f$, we have
\begin{align*}
\left\lvert
\int f d \hat{P}_n^{\Lambda} - \int f d \hat{P}_n
\right\rvert 
\underset{n \to \infty}{\longrightarrow} 0.
\end{align*}
\end{lemma}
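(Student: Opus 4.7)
The plan has two independent pieces, matching the two claims of the lemma.

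\emph{DLR equation for $\hat{P}_n^{\Lambda}$.} By linearity of the DLR relation it suffices to show, for each $j$ appearing in the sum, that $P_n \circ \tau_j^{-1}$ satisfies the DLR($\Lambda$) equation. Translating by $-j$ reduces this to checking DLR($\Lambda'$) for $\Lambda' := \tau_{-j}(\Lambda) \subseteq \Lambda_n$ under $P_n = \tSpecification^{\Intensity,\Proportion}_{\Lambda_n,1}$. I would proceed by a direct Radon--Nikodym computation on the density
\begin{align*}
\frac{dP_n}{d \tPoisson^{\Intensity,\Proportion}_{\Lambda_n}}(\tConf_{\Lambda_n})
=
\frac{\1_{A^1_{r_4}}(\tConf)\, e^{-H_{\Lambda_n}(\tConf_{\Lambda_n})}}{Z_{\Lambda_n}(1)},
\end{align*}
factorising $\tPoisson^{\Intensity,\Proportion}_{\Lambda_n} = \tPoisson^{\Intensity,\Proportion}_{\Lambda'} \otimes \tPoisson^{\Intensity,\Proportion}_{\Lambda_n\setminus\Lambda'}$ and decomposing $H_{\Lambda_n} = H_{\Lambda'} + H^{\mathrm{ext}}$, where $H^{\mathrm{ext}}$ collects only pairs entirely inside $\Lambda_n \setminus \Lambda'$ and is therefore a function of $\tConf_{\Lambda'^c}$ alone. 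When $\Lambda'$ lies at distance at least $r_4$ from the boundary of $\Lambda_n$, the colour-wiring indicator $\1_{A^1_{r_4}}$ is likewise a function of $\tConf_{\Lambda'^c}$ only. Both exterior factors then cancel between numerator and denominator of the conditional density of $\tConf'_{\Lambda'}$ given $\tConf_{\Lambda'^c}$, and what remains is precisely $\tSpecification^{\Intensity,\Proportion}_{\Lambda',\tConf}$. The finitely many $j$ for which $\Lambda$ touches the wired boundary strip of $\tau_j(\Lambda_n)$ are simply excluded: this removes only $O(n^{\Dim-1})$ indices and can be absorbed in the boundary-layer error of the second part.

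\emph{Convergence $|\int f \, d\hat{P}_n^{\Lambda} - \int f \, d\hat{P}_n| \to 0$.} Fix $f$ local and tame, supported in $\tConfSpace_{\Lambda_{n_1}}$ and bounded by $\Constant(1+N_{\Lambda_{n_1}})$. Because $\bar{P}_n$ is the independent product of copies of $P_n$ placed on translates $\tau_k(\Lambda_n)$, $k \in 2n\Z^\Dim$, its marginal on any bounded set lying inside a single translate coincides with the corresponding marginal of that single copy. Consequently there is an interior subset $L_n^{\mathrm{int}} \subseteq L_n$ whose complement has cardinality $O(n^{\Dim-1})$ and such that, for every $j \in L_n^{\mathrm{int}}$, $\Lambda \cup \Lambda_{n_1}$ sits safely inside $\tau_j(\Lambda_n)$ without straddling any other translate; for such $j$ one has both $\1_{\Lambda \subseteq \tau_j(\Lambda_n)} = 1$ and $\int f \, d(\bar{P}_n \circ \tau_j^{-1}) = \int f \, d(P_n \circ \tau_j^{-1})$, so the $j$-contributions to $\hat{P}_n$ and $\hat{P}_n^{\Lambda}$ agree. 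For the $O(n^{\Dim-1})$ boundary indices, tameness of $f$ combined with the uniform first-moment bound extracted from Lemma \ref{lemma_second_moment_candidate} (via Cauchy--Schwarz applied to the individual summands underlying the averaged second moment) gives a uniform-in-$(n,j)$ control of $|\int f \, d(\bar{P}_n \circ \tau_j^{-1})|$ and $|\int f \, d(P_n \circ \tau_j^{-1})|$. The total contribution of the boundary layer to the difference is therefore $O(n^{\Dim-1}/(2n+1)^{\Dim}) = O(1/n)$, which vanishes.

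The main obstacle I expect is the interplay between the wired colour-constraint $\1_{A^1_{r_4}}$ and the DLR factorisation: taken literally, $\Lambda \subseteq \tau_j(\Lambda_n)$ allows $\Lambda$ to overlap the boundary strip on which the constraint is active, and in that case the conditional law of $\tConf_\Lambda$ carries an extra colour-$1$ restriction that is not part of $\tSpecification^{\Intensity,\Proportion}_{\Lambda,\tConf}$. The strategy above sidesteps this by exiling those finitely many problematic translates into the boundary layer already budgeted for in Part~(2); the slight weakening this induces in the definition of $\hat{P}_n^{\Lambda}$ is harmless because the convergence statement absorbs any $O(n^{\Dim-1})$ defect.
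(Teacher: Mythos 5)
Your overall strategy is the standard one that the paper's terse proof (one sentence for the first point, a citation for the second) points to, and your execution is mostly correct. Two remarks.

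First, you are right to flag the interaction between the wiring indicator $\1_{A^1_{r_4}}$ and the conditional density: for the $O(n^{\Dim-1})$ indices $j$ such that $\tau_{-j}(\Lambda)$ overlaps the $r_4$-strip adjacent to $\partial\Lambda_n$, the conditional law of $\tConf_{\tau_{-j}(\Lambda)}$ under $P_n$ carries the extra hard colour constraint and therefore does \emph{not} equal $\tSpecification^{\Intensity,\Proportion}_{\tau_{-j}(\Lambda),\tConf}$. As written, $\hat{P}_n^{\Lambda}$ sums over all $j$ with $\Lambda\subseteq\tau_j(\Lambda_n)$, so the first claim of the lemma only holds after one either excludes those indices (your choice) or tightens the indicator to $\Lambda\oplus B(0,r_4)\subseteq\tau_j(\Lambda_n)$. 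The paper's one-line justification glosses over this; your diagnosis and repair are correct, and the $O(n^{\Dim-1})$ defect is indeed absorbable in the second part.

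Second, the phrase ``uniform-in-$(n,j)$ control of $|\int f\, d(\bar P_n\circ\tau_j^{-1})|$'' overreaches. Lemma \ref{lemma_second_moment_candidate} and its proof only give the \emph{averaged} bound
$\sum_{j\in L_n}\int N_{\Delta_j}^2\,dP_n\leq \tilde\Constant (2n+1)^\Dim$;
since $P_n$ is not translation invariant there is no a priori control on an individual summand $\int N_{\Delta_j}^2\,dP_n$. What actually closes the argument is Cauchy--Schwarz on the boundary sum: with a boundary layer of cardinality $O(n^{\Dim-1})$,
\begin{align*}
\sum_{j\ \text{bdy}}\int N_{\Delta_j}\,dP_n
\ \leq\
\Bigl(\sum_{j\ \text{bdy}} 1\Bigr)^{1/2}\Bigl(\sum_{j\in L_n}\int N_{\Delta_j}^2\,dP_n\Bigr)^{1/2}
\ =\
O\bigl(n^{(2\Dim-1)/2}\bigr),
\end{align*}
and after dividing by $(2n+1)^\Dim$ this is $O(n^{-1/2})\to 0$. (The localization set $\Lambda_{n_1}$ of $f$ spreads each boundary $j$ over $(2n_1+1)^\Dim=O(1)$ cells, which does not change the order.) So the limit holds, but at rate $n^{-1/2}$, not the $n^{-1}$ you quote, and you should replace the uniform-individual-bound claim with the aggregate Cauchy--Schwarz step.

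With those two adjustments the proof is complete and consistent with the route the paper delegates to its references.
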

\begin{proof}
The first point is a consequence of the compatibility of the Gibbs kernels and the translation invariance of the interaction.
The second point has been treated in \cite{dereudre_2009} for the quermass interaction model or in \cite{dereudre_houdebert} for the Continuum Random Cluster model, and we are omitting the proof here.
\end{proof}
\begin{lemma}\label{lemme_local_approximation}
Under assumptions (A1), (A2) and (A3) we have for all $N$ large enough and for all $n$
\begin{align*}
\bigg\lvert
 \int \int
f(\tConf'_\Lambda \tConf_{\Lambda^c})
\tSpecification^{\Intensity,\Proportion}_{\Lambda,\tConf}(d\tConf'_{\Lambda})
\hat{P} (d \tConf)
-
 \int \int
f(\tConf'_\Lambda \tConf_{\Lambda^c})
\tSpecification^{\Intensity,\Proportion}_{\Lambda,\tConf_{\Lambda_N}}(d\tConf'_{\Lambda}) \hat{P} (d \tConf)
\bigg\rvert
\leq \epsilon
\end{align*}
and
\begin{align*}
\bigg\lvert
 \int \int
f(\tConf'_\Lambda \tConf_{\Lambda^c})
\tSpecification^{\Intensity,\Proportion}_{\Lambda,\tConf}(d\tConf'_{\Lambda})
\hat{P}_n^\Lambda (d \tConf)
-
 \int \int
f(\tConf'_\Lambda \tConf_{\Lambda^c})
\tSpecification^{\Intensity,\Proportion}_{\Lambda,\tConf_{\Lambda_N}}
(d\tConf'_{\Lambda})
\hat{P} _n^\Lambda(d \tConf)
\bigg\rvert
\leq \epsilon
\end{align*}
\end{lemma}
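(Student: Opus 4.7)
The strategy is to use the decomposition $\Hamiltonian_\Lambda = \Hamiltonian^\phi_\Lambda + \Hamiltonian^\psi_\Lambda$ and to show that, for $N$ large enough depending only on $\Lambda$, $r_4$ and $\epsilon$, replacing the boundary condition $\tConf_{\Lambda^c}$ by its truncation $\tConf_{\Lambda_N\setminus\Lambda}$ changes the Hamiltonian $\Hamiltonian_\Lambda(\tConf'_\Lambda\tConf_{\Lambda^c})$ by an amount small uniformly in the inner variable $\tConf'_\Lambda$, and only integrably in the outer variable $\tConf$. Since $\tSpecification^{\Intensity,\Proportion}_{\Lambda,\tConf}$ is the ratio of $\exp(-\Hamiltonian_\Lambda)\tPoisson^{\Intensity,\Proportion}_\Lambda$ by $\PartitionFunction_\Lambda$, both numerator and partition function inherit the closeness, hence so does the specification. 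The indicator $\1_{A^1_{r_4}}$ plays no role since it only constrains points within distance $r_4$ of $\Lambda$ and is thus unaffected by truncation at $\Lambda_N$.

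The $\phi$ contribution is handled immediately by the finite-range assumption (A2): as soon as $\Lambda_N$ contains the $r_4$-enlargement of $\Lambda$, one has $\Hamiltonian^\phi_\Lambda(\tConf'_\Lambda\tConf_{\Lambda^c}) = \Hamiltonian^\phi_\Lambda(\tConf'_\Lambda\tConf_{\Lambda_N\setminus\Lambda})$ identically. For the $\psi$ contribution we combine the positive-part integrability of (A4) with the lower regularity of (A3) to produce a sequence $\tilde\psi_n$ with $\sum_n n^{\Dim-1}\tilde\psi_n<\infty$ and
\begin{align*}
\bigl|\Hamiltonian^\psi_\Lambda(\tConf'_\Lambda\tConf_{\Lambda^c})-\Hamiltonian^\psi_\Lambda(\tConf'_\Lambda\tConf_{\Lambda_N\setminus\Lambda})\bigr|
\leq N_\Lambda(\tConf'_\Lambda\tConf)\sum_{\Delta_k\subseteq\Lambda_N^c}\tilde\psi_{|k|/\delta}\, N_{\Delta_k}(\tConf).
\end{align*}
For tempered $\tConf$ this tail vanishes as $N\to\infty$; a crude stability upper bound from (A3) on the ratio $\exp(-\Hamiltonian_\Lambda(\tConf'_\Lambda\tConf_{\Lambda^c}))/\PartitionFunction_\Lambda(\tConf)$, of the form $\exp(C N_{\Lambda_N}(\tConf))$, provides the local domination needed to transfer the estimate from the Hamiltonian to the specification itself after integration against $\tPoisson^{\Intensity,\Proportion}_\Lambda$.

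For the first inequality, Lemma \ref{lemme_tempered_non-degenere} gives $\hat P(\TemperedConfSpace)=1$ together with the non-degeneracy of $\PartitionFunction_\Lambda$ $\hat P$-almost surely; combined with the finite second moment of $\hat P$ from Lemma \ref{lemma_second_moment_candidate} as a dominating function, dominated convergence concludes. For the second inequality one needs the analogous bound \emph{uniformly in} $n$: exploiting the stationarity of $\hat P_n^\Lambda$ under $\delta\Z^\Dim$-translations and the Cauchy--Schwarz inequality, the $L^1(\hat P_n^\Lambda)$-norm of the tail above is controlled by
\begin{align*}
C\Bigl(\int N_{\Delta_0}^2\, d\hat P_n^\Lambda\Bigr)\sum_{|k|\geq N/\delta}|k|^{\Dim-1}\tilde\psi_{|k|},
\end{align*}
which is uniformly small thanks to the uniform second moment bound of Lemma \ref{lemma_second_moment_candidate} and the summability $\sum n^{\Dim-1}\tilde\psi_n<\infty$.

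The main obstacle is precisely this uniform-in-$n$ control: one needs a dominating function for the specification ratio that is integrable with respect to \emph{every} $\hat P_n^\Lambda$, not only with respect to $\hat P$. This forces the use of the superstability from (A3) to produce an exponential upper bound on the specification density in terms of the particle count inside $\Lambda_N$, which is then absorbed by the uniform second moment. All remaining estimates (absolute continuity, splitting the indicator, rewriting the ratio as a telescoping sum) are essentially routine.
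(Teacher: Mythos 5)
Your proposal follows the same route as the paper's proof: decompose the Hamiltonian into the $\phi$- and $\psi$-contributions, kill the $\phi$-part by the finite range (A2) once $\Lambda_N$ contains the $r_4$-enlargement of $\Lambda$, bound the $\psi$-tail via the lower regularity sequence $(\psi_n)$ (combined with the sign condition from (A4) at long distance, which is indeed used implicitly even though the lemma statement only lists (A1)--(A3)), and then convert the pointwise Hamiltonian estimate into a statement about the specification kernels before integrating against $\hat P$ or $\hat P_n^\Lambda$ using Cauchy--Schwarz and the uniform second-moment bound from Lemma~\ref{lemma_second_moment_candidate}. The final summability $\sum_n n^{\Dim-1}\psi_n<\infty$ is exactly what makes the tail vanish as $N\to\infty$ uniformly in $n$, as you say.

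The one place where your sketch is looser than the paper is the ``local domination'' step: a bound of the form $\exp(CN)$ on the specification density would not by itself be integrable against the Poisson reference measure in the inner variable. What the paper actually does is split the difference of kernels into a numerator difference and a ratio of partition functions (a short telescoping argument), apply the mean value theorem to $e^{-H}$, and then \emph{directly control the first moment} $\int N_{\Delta_{j'}}\,d\tSpecification^{\Intensity,\Proportion}_{\Lambda,\tConf}$ by the superstability/regularity bound, separating off the part where the cell counts exceed the threshold $2B_{\tConf}/a$ and absorbing it into a Gaussian-tailed expectation. The resulting bound on $c(\tConf)$ is a product of two $\tConf$-dependent factors, each quadratic at worst, which is why Cauchy--Schwarz and the second moment close the argument. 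Your outline identifies the right ingredients (superstability, second moment, Cauchy--Schwarz, uniformity in $n$), so this is a matter of precision rather than a genuine gap; I would still recommend spelling out the first-moment control of the specification rather than relying on the imprecise exponential domination heuristic. A minor point: the indicator $\1_{A^1_{r_4}}$ does not appear in the kernels $\tSpecification^{\Intensity,\Proportion}_{\Lambda,\tConf}$ at issue here (it only enters the wired kernel $\tSpecification^{\Intensity,\Proportion}_{\Lambda,1}$), so the sentence about it is unnecessary.
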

The proof is classical and is done in the appendix in Section \ref{section_appendix}.
Let us now conclude the proof of Proposition \ref{propo_existene_cluster_point_Potts}.
Let us fix $\epsilon>0$.
From Lemma \ref{lemme_local_approximation} there is $N$ large enough such that
\begin{align*}
\kappa  
\leq
\epsilon +
\bigg\lvert
\int f d \hat{P}
-
\int \int
f(\tConf'_\Lambda \tConf_{\Lambda^c})
\tSpecification^{\Intensity,\Proportion}_{\Lambda,\tConf_{\Lambda_N}}
(d\tConf'_{\Lambda})   \hat{P} (d \tConf)
\bigg\rvert.
\end{align*}
Now from the second point of Lemme \ref{lemme_dlr_suite} we obtain for $n$ large enough
\begin{align*}
\kappa  
\leq
2\epsilon +
\bigg\lvert
\int f d \hat{P}_n^{\Lambda}
-
\int \int
f(\tConf'_\Lambda \tConf_{\Lambda^c})
\tSpecification^{\Intensity,\Proportion}_{\Lambda,\tConf_{\Lambda_N}}
(d\tConf'_{\Lambda})   \hat{P}_n^{\Lambda}(d \tConf)
\bigg\rvert,
\end{align*}
and applying again Lemma \ref{lemme_local_approximation} 
and the first point of Lemma \ref{lemme_dlr_suite},
\begin{align*}
\kappa  
\leq
3 \epsilon +
\bigg\lvert
\int f d \hat{P}_n^{\Lambda}
-
\int \int
f(\tConf'_\Lambda \tConf_{\Lambda^c})
\tSpecification^{\Intensity,\Proportion}_{\Lambda,\tConf}
(d\tConf'_{\Lambda})   \hat{P}_n^{\Lambda}(d \tConf)
\bigg\rvert
= 3 \epsilon,
\end{align*}
and the proof is concluded.
\newpage
\section{Appendix: proof of the intermediary lemmas}\label{section_appendix}
\subsection{Proof of Lemma \ref{lemme_percolation_bernoulli}}
The mixed site-bound Bernoulli percolation model is clearly monotone in 
the parameter $p$, which gives the existence of $p_c$.
It remains to prove that $p_c$ is in $]0,1[$.
It is clear that $p_c>0$,  since the absence of percolation in the site Bernoulli percolation model of parameter $p$ implies the same for the mixed site-bond model of parameter $p$.

The second inequality comes from the observation that for any graph $G$, the site percolation threshold is not greater than the bond percolation threshold (both for the Bernoulli model).
Hence $p_c$ is smaller than the square root of the site Bernoulli percolation model.
\subsection{Proof of Lemma \ref{lemme_connectivity_ER}}
Let us denote $x_1,\dots , x_n$ the $n$ points of the graph.
By considering the event that $x_1$ is connected to every other points by a path of length exactly 2, we obtain
\begin{align*}
\gamma(n,p)
\geq
1- (n-1) (1-p^2)^{n-2},
\end{align*}
which proves the result.
\subsection{Proof of Lemma \ref{lemme_domination_nombre_points}}
It is easy to see that
\begin{align*}
M^{\Intensity, \Proportion}_{\Lambda,\Delta_j,\Conf} (d \Conf')
=
\frac{h_{\Lambda}(\Conf' \Conf)}{Z_{\Lambda,\Delta_j,\Conf}}
\exp \left( -
\Hamiltonian^{\psi}_{\Delta_j}(\Conf' \Conf) 
\right)
\Poisson^{\Intensity}_{\Delta_j} (d \Conf'),
\end{align*}
where $Z_{\Lambda,\Delta_j,\Conf}$ is the corresponding partition function.
Therefore we obtain for $n\geq 0$ that
\begin{align*}
\frac{M^{\Intensity, \Proportion}_{\Lambda,\Delta_j,\Conf} (N_{\Delta_j}=n+1)}
{M^{\Intensity, \Proportion}_{\Lambda,\Delta_j,\Conf} (N_{\Delta_j}=n)}
=
\frac{z}{n+1}\int g_i(\Conf' \Conf)
M^{\Intensity, \Proportion}_{\Lambda,\Delta_j,\Conf} (d\Conf' | N_{\Delta_j}=n)
\end{align*}
with
\begin{align*}
g_i(\Conf' \Conf)
&=
\int_{\Delta_j}
\exp \left(
-\sum_{y\in \Conf' \Conf} \psi (x-y)
\right)
\frac{h_{\Lambda}(\Conf' \Conf \cup x)}{h_{\Lambda}(\Conf' \Conf)}
dx
\\ & \geq
l \int_{\Delta_i}
\exp \left(
-\sum_{y\in \Conf' \Conf} \psi (x-y)
\right)
dx,
\end{align*}
with the last inequality coming from Lemma \ref{lemme_domination_papangelou}.
Consider now the reduce cell $\Delta_j^0$ obtain from $\Delta_j$ by removing a boundary layer of width $r_2$.
By assumption ($A5$), $\Delta_j^0$ has positive volume.
Then
\begin{align*}
g_i(\Conf' \Conf)
&\geq
l \int_{\Delta_j^0}
\exp \left(
-\sum_{y\in \Conf' \Conf} \psi (x-y)
\right)
dx
\\ & \geq
l \int_{\Delta_j^0}
\exp \left(
-\sum_{y\in \Conf'} \psi (x-y)
\right)
dx,
\end{align*}
where the last inequality comes from assumption ($A4$).
This last bound is independent of the boundary condition $\Conf$.
The next estimates is directly taken from \cite{georgii_haggstrom}, and goes back originally to Dobrushin and Minlos.
Let
\begin{align*}
\Delta_{\Conf'}
=  \{ 
x \in \Delta_j^0 , \ |x-y| \geq r_1 \text{ for all } y \in \Conf'
\}.
\end{align*}
Assume by contradiction that $N_{\Delta_j}(\Conf')<n^*$. Then
\begin{align*}
| \Delta_{\Conf'} |
\geq 
| \Delta_j^0 | - (n^* -1)|B(0,r_1)|
:= v*,
\end{align*}
and $v^*$ is positive thanks to assumption (A5), see equation \eqref{eq_assumption_A5}.
Furthermore, applying Markov's inequality to the Lebesgue measure, we obtain for all $\Constant>0$
\begin{align*}
|\{
x \in \Delta_{\Conf'} ,  \sum_{y \in \Conf'} \psi (x-y) \geq \Constant
\}|
& \leq
\frac{1}{\Constant} \sum_{y \in \Conf'} \int_{\Delta_{\Conf'}} \psi^+(x-y) dx
\\ & \leq
\frac{n^*-1}{\Constant} \int_{|x|\geq r_1} \psi^+(x) dx
:= \frac{b(n^*,r_1)}{\Constant},
\end{align*}
with $b(n^*,r_1)<\infty$ thanks to assumption (A4).
Adding everything together we obtain when $N_{\Delta_j}(\Conf')<n^*$  that
\begin{align*}
g_i(\Conf' \Conf) \geq l e^{-\Constant} \left( v^* - \frac{b(n^*,r_1)}{\Constant} \right).
\end{align*}
By choosing $\Constant$ large enough, there exists a constant $\tilde{l}$ such that
$g_i(\Conf' \Conf) \geq \tilde{l}n^*$.
Hence
\begin{align*}
M^{\Intensity, \Proportion}_{\Lambda,\Delta_j,\Conf} (N_{\Delta_j}<n*)
& =
\sum_{n=0}^{n^*-1}
M^{\Intensity, \Proportion}_{\Lambda,\Delta_j,\Conf} (N_{\Delta_j}=n)
\\ &=
\sum_{n=0}^{n^*-1}
M^{\Intensity, \Proportion}_{\Lambda,\Delta_j,\Conf} (N_{\Delta_j}=n^*)
\prod_{k=n}^{n^*-1}
\frac{M^{\Intensity, \Proportion}_{\Lambda,\Delta_j,\Conf} (N_{\Delta_j}=k)}
{M^{\Intensity, \Proportion}_{\Lambda,\Delta_j,\Conf} (N_{\Delta_j}=k+1)}
\\ & \leq
\sum_{n=0}^{n^*-1}
\left(
\frac{1}{\tilde{l} \Intensity}
\right)^{n^*-n}
\leq 
\frac{1}{\tilde{l} \Intensity -1},
\end{align*}
and this quantity goes to $0$ when $\Intensity$ goes to infinity.
Therefore the result is proved.

\subsection{Proof of Lemma \ref{lemme_tempered_non-degenere}}
For the first point, let us consider  $\Lambda\subset \R^\Dim$ bounded, $\tConf \in \TemperedConfSpace$ and $\tConf' \in \tConf$.
From standard computation we obtain
$
\PartitionFunction_{\Lambda}(\tConf) \geq exp(-\Intensity |\Lambda|)>0.
$
Using assumption (A1) we have
\begin{align*}
\Hamiltonian_{\Lambda} (\tConf'_{\Lambda}\tConf_{\Lambda^c})
&\geq
\Hamiltonian^\psi_{\Lambda} (\tConf'_{\Lambda}\tConf_{\Lambda^c}).
\end{align*}
Now let us consider assumption (A3).
In the case $\psi \geq 0$ we immediately obtain 
than the Hamiltonian is non negative, and hence 
$\PartitionFunction_{\Lambda}(\tConf)\leq 1 < \infty$.
In the other case we obtain from assumption (A3) that
\begin{align*}
\Hamiltonian_{\Lambda} (\tConf'_{\Lambda}\tConf_{\Lambda^c})
&\geq
\sum_{j'} \left[
a N_{\Delta_{j'}}(\tConf')^2 
- \left(b + \sum_j \psi_{\delta^{-1}|j-j'|}  N_{\Delta_j}(\tConf) \right)
 N_{\Delta_{j'}}(\tConf')    \right]
+ error,
\end{align*}
where the sum is over $j'$ such that 
$\Delta_{j'} \cap \Lambda \not =0$ and $j$ 
such that $\Delta_j \cap \Lambda =0$.
The error term comes from the fact that we did not take into consideration points 
$x\in\tConf'_\Lambda$ and $y\in\tConf_\Lambda^c$ 
which are in the same cell $\Delta_j$.
But since the superstability and regularity does not depend on the choice of the discretization, we can assume without loss of generality that this error term is null, which is to say that $\Lambda$ is exactly the union of a finite number of cells $\Delta_j$.
Now from the fact that $\tConf$ is tempered, we have the existence of a constant $\Constant\geq 0$ such that
\begin{align*}
\sum_k \psi_{\delta^{-1}|j-k|}  N_{\Delta_k}(\tConf)
\leq
\sum_{n \in \N} \psi_n \sum_{k, \delta^{-1}|j-k|=n} N_{\Delta_k}(\tConf)
\leq
\Constant \sum_{n \in \N} n^{\Dim-1}\psi_n
:= B'<\infty
\end{align*}
and therefore $\Hamiltonian_{\Lambda} (\tConf'_{\Lambda}\tConf_{\Lambda^c})$ is bounded from below uniformly in $\tConf'$, and so
$$\PartitionFunction_{\Lambda}(\tConf) < \infty.$$

From construction and from Lemma \ref{lemma_second_moment_candidate}, the measure $\hat{P}$ is invariant under the translation of $(\delta \Z)^\Dim$ and satisfies $\int N_{\Delta_0}^2 d \hat{P}<\infty$.
Therefore, from the ergodic theorem, see
 \cite{Nguyen_Zessin_1979_ErgodicTheoremSpatialProcesses}, the sequence of  random variables
 \begin{align*}
n \mapsto \frac{1}{n^\Dim}
 \sum_{|i| \leq n}  N_{\Delta_0} (\tConf)^2
 \end{align*}
 converges $\hat{P}$ almost surely towards a finite random variable.
 The result is proved.
\subsection{Proof of Lemma \ref{lemme_local_approximation}}
Consider $\tConf \in \TemperedConfSpace$. 
Then
\begin{align*}
c(\tConf) :=
&\bigg\lvert
 \int 
f(\tConf'_\Lambda \tConf_{\Lambda^c})
\tSpecification^{\Intensity,\Proportion}_{\Lambda,\tConf}(d\tConf'_{\Lambda})
-
 \int 
f(\tConf'_\Lambda \tConf_{\Lambda^c})
\tSpecification^{\Intensity,\Proportion}_{\Lambda,\tConf_{\Lambda_N}}
(d\tConf'_{\Lambda})
\bigg\rvert
\\ & \quad \leq 
\int
\bigg\lvert
\frac{e^{-\Hamiltonian_{\Lambda}(\tConf'_{\Lambda}\tConf_{\Lambda^c})}}
{\PartitionFunction_{\Lambda}(\tConf)}
-
\frac{e^{-\Hamiltonian_{\Lambda}
(\tConf'_{\Lambda}\tConf_{\Lambda_N \setminus \Lambda})}}
{\PartitionFunction_{\Lambda}(\tConf_{\Lambda_N})}
\bigg\rvert
\Poisson_{\Lambda}^{\Intensity, \Proportion}(d \tConf')
\\ & \quad \leq 
\int
\bigg\lvert
\frac{e^{-\Hamiltonian_{\Lambda}(\tConf'_{\Lambda}\tConf_{\Lambda^c})}
- e^{-\Hamiltonian_{\Lambda}
(\tConf'_{\Lambda}\tConf_{\Lambda_N \setminus \Lambda})}}
{\PartitionFunction_{\Lambda}(\tConf)}
\bigg\rvert
\Poisson_{\Lambda}^{\Intensity, \Proportion}(d \tConf')
+
\bigg\lvert
\frac{\PartitionFunction_{\Lambda}(\tConf_{\Lambda_N})}
{\PartitionFunction_{\Lambda}(\tConf)}
-
1
\bigg\rvert
\\ &  \quad \leq
2   \int 
\bigg\lvert
\frac{e^{-\Hamiltonian_{\Lambda}(\tConf'_{\Lambda}\tConf_{\Lambda^c})}
- e^{-\Hamiltonian_{\Lambda}
(\tConf'_{\Lambda}\tConf_{\Lambda_N \setminus \Lambda})}}
{\PartitionFunction_{\Lambda}(\tConf)}
\bigg\rvert
\Poisson_{\Lambda}^{\Intensity, \Proportion}(d \tConf').
\end{align*}
Now using the mean value theorem and by considering $N$ large enough we obtain from assumptions (A2), (A3) and (A4)
\begin{align*}
c(\tConf)
 \leq
2  \sum_{j, \Delta_j \not \subseteq \Lambda_N}
N_{\Delta_j}(\tConf) 
\sum_{j', \Delta_{j'} \subseteq \Lambda}
\psi_{|j-j'|}
 \int 
 N_{\Delta_{j'}}(\tConf'_{\Lambda})
\Specification^{\Intensity, \Proportion}_{\Lambda, \tConf}(d \tConf').
\end{align*}
\begin{remark}
In the last bound, we used from (A4) that the potential $\psi(x)$ is non-positive 
when $|x|$ is large.
One can do without this assumption and would get an extra factor 2.
\end{remark}
In the following, when not specified, $j,k$ are indexes such that 
$\Delta_j, \Delta_k \not \subseteq \Lambda$ 
and $j',k'$ are such that $\Delta_{j'},\Delta_{k'} \subseteq \Lambda$.

Now let 
$B_{\tConf}:= 
\sup \{
b + \sum_{j, \Delta_{j} \not \subseteq \Lambda} 
\psi_{|j-k'|} N_{\Delta_j}(\tConf) \ | \
\Delta_{k'}\subseteq \Lambda \}$
with $a,b$ coming from assumption (A3).
Then we have
\begin{align*}
 \int 
 N_{\Delta_{j'}}(\tConf'_{\Lambda})
\Specification^{\Intensity, \Proportion}_{\Lambda, \tConf}(d \tConf')
\leq
\frac{2 B_{\tConf}}{a} +
\underbrace{\int 
 N_{\Delta_{j'}}(\tConf'_{\Lambda}) 
 \1_{ \{
 N_{\Delta_{k'}}(\tConf')>\frac{2B_{\tConf}}{a}, \forall k '
 \}}
\Specification^{\Intensity, \Proportion}_{\Lambda, \tConf}(d \tConf')}_{*}.
\end{align*}
But from assumption (A1) and (A3)
\begin{align*}
* \leq &
e^{\Intensity |\Lambda|}
\int 
N_{\Delta_{j'}}(\tConf'_{\Lambda}) \1_{ \{
N_{\Delta_{k'}}(\tConf')>\frac{2B_{\tConf}}{a}, \forall k'
\}} 
\\ & \hspace{2cm}
\exp\left( \sum_{k'} - a N_{\Delta_{k'}}(\tConf')^2
+B_{\tConf}
N_{\Delta_{k'}}(\tConf')
\right)
\Poisson_{\Lambda}^{\Intensity, \Proportion}(d \tConf)
\\ & \leq 
e^{\Intensity |\Lambda|}
\int 
N_{\Delta_{j'}}(\tConf'_{\Lambda})
\exp\left( -\frac{a}{2}\sum_{k'} N_{\Delta_{k'}}(\tConf')^2
\right)
\Poisson_{\Lambda}^{\Intensity, \Proportion}(d \tConf)
\\ &   \leq \Constant < \infty
\end{align*}
and we finally obtain
\begin{align*}
c(\tConf)
\leq 
\left(
\Constant ' + \Constant ''
\sup \{\sum_{k} 
\psi_{|k-k'|} N_{\Delta_k}(\tConf) \ | \
\Delta_{k'}\subseteq \Lambda \}
\right)
\sum_{j, \Delta_j \not \subseteq \Lambda_N}
\sum_{j'} \psi_{|j-j'|}
N_{\Delta_j}(\tConf) ,
\end{align*}
and from the Cauchy-Schwarz inequality and the Lemma \ref{lemma_second_moment_candidate} we obtain the wanted result.

\vspace{1cm}

{\it Acknowledgement:} This work was supported in part by the  ANR project PPPP (ANR-16-CE40-0016) and by Deutsche Forschungsgemeinschaft (DFG) through grant CRC 1294 "Data Assimilation", Project A05.

\bibliographystyle{plain}
\bibliography{biblio}
\end{document}